\definecolor{red}{rgb}{0.7,0.15,0.15}
\definecolor{green}{rgb}{0,0.5,0}
\definecolor{blue}{rgb}{0,0,0.7}
\makeatletter \@addtoreset{equation}{section}
\newtheorem{theorem}{Theorem}[section]
\newtheorem{assumption}[theorem]{Assumption}
\newtheorem{lemma}[theorem]{Lemma}
\newtheorem{definition}[theorem]{Definition}
\newtheorem{remark}[theorem]{Remark}
\newtheorem*{definition*}{Definition}
\newcommand{\diff}{\,\mathrm{d}}
\def\balpha{\mathbf{\alpha}}
\newcommand\cA{\mathcal A}
\newcommand\cF{\mathcal F}
\newcommand\cL{\mathcal L}
\newcommand\cM{\mathcal M}
\newcommand\cP{\mathcal P}
\newcommand\cW{\mathcal W}
\newcommand\EE{\mathbb E}
\newcommand\FF{\mathbb F}
\newcommand\PP{\mathbb P}
\newcommand\NN{\mathbb N}
\newcommand\RR{\mathbb R}
\newcommand\XX{\mathbb X}
\def \E{\mathbb{E}}
\def \F{\mathbb{F}}
\def \G{\mathbb{G}}
\def \H{\mathbb{H}}
\def \L{\mathbb{L}}
\def \N{\mathbb{N}}
\def \P{\mathbb{P}}
\def \Q{\mathbb{Q}}
\def \R{\mathbb{R}}
\def \S{\mathbb{S}}
\def \W{\mathbb{W}}
\def \X{\mathbb{X}}
\def \Y{\mathbb{Y}}
\def \Z{\mathbb{Z}}
\def\Ac{\mathcal{A}}
\def\Bc{\mathcal{B}}
\def\Cc{\mathcal{C}}
\def\Ec{\mathcal{E}}
\def\Fc{\mathcal{F}}
\def\Gc{\mathcal{G}}
\def\Hc{\mathcal{H}}
\def\Lc{\mathcal{L}}
\def\Mc{\mathcal{M}}
\def\Nc{\mathcal{N}}
\def\Pc{\mathcal{P}}
\def\Qc{\mathcal{Q}}
\def\Vc{\mathcal{V}}
\def\Wc{\mathcal{W}}
\newcommand{\x}{\mathbf{x}}
\newcommand{\e}{\mathrm{e}}
\newcommand{\xdim}{m}
\newcommand{\bmdim}{d}
\newcommand{\np}{N}
\newcommand{\mybar}[1]{\makebox[0pt]{$\phantom{#1}\overline{\phantom{#1}}$}#1}
\newcommand{\muin}{\mu_{\mathrm{in}}}
\newcommand{\mufin}{\mu_{\mathrm{fin}}}
\newcommand{\camilo}[1]{{\color{magenta}#1}}
\newcommand{\tbd}[1]{{\color{green}#1}}
\def\eps{\varepsilon}
\def\d{\mathrm{d}}
\DeclareMathOperator*{\argmin}{arg\,min}
\newcommand{\msim}{\overset{c}{\sim}}
\begin{document}
\title{Propagation of chaos for mean field Schrödinger problems\footnote{The authors would like to thank Julio Backhoff-Veraguas for helpful discussions. Camilo Hern\'andez acknowledges the support of a Presidential Postdoctoral Fellowship at Princeton University and the Cecilia Tanner Research Fund at Imperial College London. Ludovic Tangpi acknowledges partial support by the NSF under Grants DMS-2005832 and CAREER DMS-2143861.}}

\author{Camilo {\sc Hern\'andez} \footnote{Princeton University, ORFE department, USA. camilohernandez@princeton.edu.} \and Ludovic {\sc Tangpi} \footnote{Princeton University, ORFE department, USA. ludovic.tangpi@princeton.edu}}

\date{\today}

\maketitle

\abstract{
	 In this work, we study the mean field Schr\"odinger problem from a purely probabilistic point of view by exploiting its connection to stochastic control theory for McKean-Vlasov diffusions. 
	Our motivation is to study scenarii in which mean field Schr\"odinger problems arise as the limit of ``standard'' Schr\"odinger problems over \emph{interacting} particles. 
	Due to the stochastic maximum principle and a suitable penalization procedure, the result follows as a consequence of novel (quantitative) propagation of chaos results for forward-backward particle systems. 
	The approach described in the paper seems flexible enough to address other questions in the theory. 
	For instance, our stochastic control technique further allows us to solve the mean field Schr\"odinger problem and characterize its solution, the mean field Schr\"odinger bridge, by a forward-backward planning equation. 
}
\setlength{\parindent}{0pt}

\tableofcontents


\section{Introduction}\label{sec:intro}

In 1932 \citeauthor*{schrodinger1931uber} \cite{schrodinger1931uber,schrodinger1932sur} introduced the problem of determining the most likely path of a cloud of independent scattering particles, conditionally on the observation of their initial, $t=0$, and terminal, $t=T\in (0,\infty)$ distributions. 
This was translated in modern probabilistic terms, see \citeauthor*{follmer1988random} \cite{follmer1988random}, as an entropy minimization problem with marginal constraints by employing the theory of large deviations. 
Let us present this heuristically. 
For positive integers $d$, $i\geq 1$, {\color{black} $\mu_{\rm in}$ and $\mu_{\rm fin}$ probability measures on $\R^d$}, let $\Omega^i:=\Cc([0,T],\R^d)$, $X^i$ be the canonical process, $\P_i$ be the Wiener measures under which the particle $X^i$ has initial distribution $\mu_{\rm in}$, and let $\Omega$ (resp. $\P$) be the associated infinite product space (resp. probability measure). 
Because $(X^i)_{i\ge1}$ are \emph{independent and identically distributed}, it then follows from Sanov's theorem that for any probability measure $\Q$ on $\Cc([0,T],\R^d)$, 
\[
\P\bigg( \frac{1}N \sum_{i=1}^N \delta_{X^i} =\Q\bigg)\underset{N\to \infty}{\approx} \exp\big(-N\cdot   \Hc(\Q|\P_1)\big),\; \text{where } \Hc(\Q| \P_1):=\begin{cases} \displaystyle{\color{black} \E^{\P_{ 1}}}\bigg[ \frac{\d \Q}{\d \P_1}\log \frac{\d \Q}{\d \P_1}\bigg] & \text{if }\Q\ll\P_1\\
\infty&  \text{otherwise}
\end{cases}
\]
denotes the relative entropy between $\P_1$ and $\Q$. 
	That is, the most likely behaviour of the process $(X^i)_{i\geq 1}$ is given by the measure $\Q$ which minimizes the relative entropy $\Hc(\cdot | \P_1)$ with respect to the measure $\P_1$, i.e.,
\begin{equation}
\label{eq:dyn.SP}
\Vc_{\rm S}(\muin,\mufin):= \inf \Big\{ \Hc(\Q|\P_1): \Q\in {\color{black} \Pc(\Omega^1)},\, \Q\circ (X_0^1)^{-1}=\muin, \,\Q\circ (X_T^1)^{-1}=\mufin \Big\}.
\end{equation}
Problem \eqref{eq:dyn.SP} is known as the (dynamic) Schr\"odinger problem.
Its value, the entropic transportation cost, and its optimal solution, the Schrödinger bridge, are respective probabilistic counterparts to the transportation cost and displacement interpolation.
This problem plays an important role in optimal transport theory.
Indeed, the (dynamic) Schrödinger's problem is strongly connected to the (dynamic) Monge--Kantorovich optimal transport problem,
see \citeauthor*{mikami2004monge} \cite{mikami2004monge} or \citeauthor*{leonard2012from} \cite{leonard2012from,leonard2016lazy}. 
We refer to \citeauthor*{leonard2014survey} \cite{leonard2014survey} for an extensive (but by now somewhat outdated) survey on the topic, and to the more recent survey by \citeauthor*{chen2021stochastic} \cite{chen2021stochastic} and the lecture notes of \citeauthor*{nutz2022introduction} \cite{nutz2022introduction} for expositions of the theory.
Let us nonetheless mention that the static version of Schrödinger problem, obtained by projecting\footnote{We refer to \cite{leonard2014survey} for a detailed statement on the connection between the static and the dynamic versions of the Schrödinger problem.}~onto the endpoint marginals has received sustained attention in recent years mostly due to the striking benefit of entropy penalization in computational optimal transport, 
see \citeauthor*{cuturi2013sinkhorn} \cite{cuturi2013sinkhorn}
and \citeauthor*{altschuler2017near} \cite{altschuler2017near}.
In this context, but from a theoretical point of view, we also mention \citeauthor*{bernton2022stability} \cite{bernton2021entropic,bernton2022stability} and \citeauthor*{nutz2021entropic} \cite{nutz2021entropic, nutz2022stability} who study the stability and convergence of static Schrödinger bridges and Schrödinger potentials, respectively. 
	\medskip
	
This work will focus on the dynamic Schr\"odinger problem \eqref{eq:dyn.SP}.
It is less studied from the probabilistic standpoint, but has found several practical applications where the so-called interpolating flow is essential. 
These include problems in economics, see \citeauthor*{galichon2015nonlinear} \cite{galichon2015nonlinear}, network routing, see \citeauthor*{chen2017robust} \cite{chen2017robust}, 
  image interpolation, see \citeauthor*{papadakis2014optimal} \cite{papadakis2014optimal} and \citeauthor*{peyre2019computational} \cite[Chapter 7]{peyre2019computational}, and statistics, see \citeauthor*{hamdouche2023generative} \cite{hamdouche2023generative}, among others.\footnote{For completeness we also mention the works \citeauthor*{monteiller2019alleviating} \cite{monteiller2019alleviating}, and \citeauthor*{lavenant2018dynamic} \cite{lavenant2018dynamic} and \citeauthor*{solomon2015convolutional} \cite{solomon2015convolutional}.}\!
We will develop a new stochastic control and purely probabilistic approach to the dynamic Schr\"odinger problem complementing works by \citeauthor*{mikami2008optimal}  \cite{mikami2006duality, mikami2008optimal} and \citeauthor*{chen2016relation} \cite{chen2016relation,chen2017optimal}. 
\medskip

Let us present the problem in which we will be interested: Let $T>0$ and a positive integer $d$ be fixed, and denote by $(\Omega, \mathcal{F},\PP)$ 
and abstract Polish probability space carrying a sequence of independent $\mathbb{R}^d$-valued Brownian motions $(W^i)_{i\geq 1}$.
For every positive integer $N$, let $\mathcal{ F}_0$ be an initial $\sigma$-field independent of $W^1,\dots, W^N$.
We equip $\Omega$ with the filtration $\mathbb{F}^N:=(\mathcal{F}_t^N)_{t\in[0,T]}$, which is the $\PP$-completion of the filtration generated by $W^1, \dots, W^N$ and $\mathcal{F}_0$, and we further denote by $\mathbb{F}^i:=(\mathcal{F}_t^i)_{t\in[0,T]}$ the $\PP$-completion of the filtration generated by $W^i$ and $\mathcal{F}_0$. 
Without further mention, we use the identifications
\begin{equation*}
	W \equiv W^1;  \quad \cF \equiv \cF^1 \quad \text{and} \quad \FF \equiv \FF^1.
\end{equation*}
It is well--known from the work of \citet*{follmer1988random} (see also \citeauthor*{leonard2012girsanov} \cite{leonard2012girsanov}) that \eqref{eq:dyn.SP} admits the reformulation
\begin{equation}
\label{eq.dyn.SP.control}
	\Vc_{\rm S}(\muin,\mufin)=\inf\bigg\{  \E^\P\bigg[\int_0^T \frac1{2}  \|\alpha_t\|^2\d t\bigg]:  \d X_t=\alpha_t\d t+\d W_t, \, \P\circ (X_0)^{-1}= \muin,\, \P\circ (X_T)^{-1}= \mufin\bigg\},
\end{equation}
where the infimum is taken over \emph{control processes} $ \alpha$ which are $\R^d$--valued, $\F$--adapted and square integrable.
This reformulation gives $\Vc_{\rm S}(\muin,\mufin)$ as the value of a (constrained)
stochastic control problem,
an idea that has been very successfully exploited by \cite{mikami2006duality, mikami2008optimal}
to study the asymptotic behaviour of the zero--noise limit 
as well as by \citeauthor*{chiarini2022entropic} \cite{chiarini2022entropic} and \citeauthor*{tan2013optimal} \cite{tan2013optimal} to study the case of particles following Langevin and semimartingale dynamics instead of Brownian motion, respectively.
A crucial feature of this rich theory
is the \emph{independence} of the particles $(X^i)_{i\geq 0}$.
In this work, we study the most likely path of a cloud of \emph{interacting particles} with given initial and final configurations.
That is, given an interaction potential $\Psi$, we consider
the problem
\begin{align}
\notag
	\Vc^N_{\rm e}(\muin,\mufin):=\inf\bigg\{\E^{\P}\bigg[ \frac1{2N} \sum_{i=1}^N \int_0^T\! \|\alpha_t^i\|^2\d t\bigg]: &\;  \d X_t^i=\Big( \alpha_t^i-\frac1N \sum_{j=1}^N \nabla\Psi (X_t^i- X_t^j )\Big) \d t+\d W_t^i,\\\label{eq:N.SP.intro}
&  \; \P\circ (X_0^i)^{-1} =\muin,\; \P\circ ( X_T^i)^{-1} = \mufin  ,\; 1\leq i\leq N\bigg\},
\end{align}
{where $X_0^{1},\dots, X_0^{N}$ are i.i.d.~$\Fc_0$-measurable $\R^\xdim$-valued random variables} and the infimum is taken over \emph{vectors} $(\alpha^1,\dots,\alpha^N)$ such that each $\alpha^i$ is $\R^d$--valued, $\F^N$--adapted and square integrable.
This problem was first studied by \citeauthor*{backhoff2020mean} \cite{backhoff2020mean}.
Because of the high intractability of the problem for $N$ large, 
these authors borrowed inspiration from the theory of McKean--Vlasov control to introduce the so-called \emph{mean field (entropic) Schrödinger problem} 
\begin{align*}
 	\Vc_{\rm e}(\muin,\mufin):=\inf\Big\{\mathcal{H}(\Q |\Gamma(\Q)) :   \d X_t= -(\nabla \Psi \ast {\color{black} \Q_t} ) (X_t)  \d t+\d W_t,\;\Gamma(\Q):={\color{magenta} \Q}\circ( X)^{-1},\, \Q \circ (X_0)^{-1}=\muin, \, \Q \circ (X_T)^{-1}=\mufin  \Big\},
 \end{align*}
 where $\ast$ denotes the convolution operator, and {\color{black}the infimum is taken over probability measures $\Q$ on $\Cc([0,T],\R^d)$ with finite first moment and whose $t$-marginal we denote $\Q_t$}. The value $\Vc_{\rm e}(\muin,\mufin)$ and its optimizers are referred to as the \emph{mean field entropic transportation cost} and \emph{mean field Schr\"odinger bridges} (MFSB for short), respectively. 
In complete analogy to the independent case, \cite{backhoff2020mean} showed that
\begin{align}\label{MFSP:entropic}
\begin{split}
 	\Vc_{\rm e}(\muin,\mufin)=\inf\bigg\{\E^\P\bigg[\int_0^T  \frac1{2}  \|\alpha_t\|^2 \diff t\bigg]:&  \; \d X_t =\big( \alpha_t - (\nabla \Psi \ast \P\circ (X_t)^{-1}) (X_t)\big)\d t +\d W_t, \\
	&  \; \P\circ (X_0)^{-1}=\muin,\, \P\circ  (X_T)^{-1}=\mufin  \bigg\},
\end{split}
\end{align}
where the infimum is taken over $\R^d$--valued, $\F$--adapted and square integrable processes $\alpha$.
 In this setting, the optimal states $\hat X$ correspond, via their law, to mean field Schr\"odinger bridges.

\medskip
 
The main motivation of the present work is to give conditions guaranteeing that the mean field Schr\"odinger problem actually arises as the limiting value of the sequence of finite particle Schr\"odinger problems \eqref{eq:N.SP.intro}.
That is, rigorously proving the limit	$\Vc_{\rm e}^N\longrightarrow \Vc_{\rm e}$ as $N$ goes to infinity.
This is strongly connected to the so-called convergence problem in mean field game theory \citeauthor*{cardaliaguet2015master} \cite{cardaliaguet2015master}.
Our approach will be based on stochastic calculus arguments reminiscent of those of \citeauthor{lauriere2021convergence} \cite{lauriere2021convergence} that allow reducing the convergence problem into a ``forward-backward propagation of chaos'' issue.
Due to the singularity of the terminal cost, our analysis begins with a suitable penalization argument that allows us to to remove the constraint on the law of $X_T$, thus transforming the stochastic optimal transport problem into a ``standard'' stochastic control problem of McKean-Vlasov type.
A map $g:\Pc_2(\R^m)\longrightarrow [0,\infty)$ is said to be a \emph{smooth penalty function} if: $(i)$ $g(\mu)=0$ if and only if $\mu=\mufin$; $(ii)$ $g$ is L-differentiable. 
We refer to \Cref{sec:MFSP} for details as well as a reminder of the notion of $L$-differentiability and examples of penalty functions.
Given a smooth penalty function $g$, for each $k\ge 1$, we consider the penalized problem
\begin{equation}
\label{eq:inf.entropic.problem-k.penalized}
	\Vc^{k}_{\rm e}(\muin, \mufin) := \inf \bigg\{ \EE\bigg[\int_0^T\frac12 \|\alpha_t\|^2 \d t + kg(\mathcal{L}(X_T)) \bigg]:    \; \d X_t =\big( \alpha_t - (\nabla \Psi \ast \P\circ (X_t)^{-1}) (X_t)\big)\d t +\d W_t, \; \P\circ (X_0)^{-1}=\muin \bigg\},
\end{equation}

which, by strong duality, allows us to see $\Vc_{\rm e}$ as the limit, as $k$ goes to infinity, of the penalized problems \eqref{eq:inf.entropic.problem-k.penalized}, i.e., $\Vc_{\rm e}^{k}\longrightarrow \Vc_{\rm e}$ as $k$ goes to infity. 
	By introducing the corresponding penalized version $\Vc^{N,k}_{\rm e}$,
	the stochastic maximum principle developed by \citeauthor*{carmona2015forward} \cite{carmona2015forward}
	reduces the convergence problem to a forward-backward propagation of chaos with the caveats that for any $k$, the $N$-particle system associated to $V^{N,k}_{\rm e}$ is of McKean--Vlasov type itself!
	This makes the results in \cite{lauriere2021convergence} 
	inoperable. 
	Moreover, using a synchronous coupling argument along with techniques developed in \cite{lauriere2021convergence} would require imposing conditions on the coefficients of the particle system that depend on $k$, which is not desirable given that $k$ needs to go to infinity eventually.
	One way we propose to get around these issues is to impose a convexity property on the penalty function well-known in optimal transport theory, see \citeauthor*{mccann1997convexity} \cite{mccann1997convexity}, called {\it displacement convexity}, see also \citeauthor*{carmona2018probabilisticI} \cite[Proposition 5.79]{carmona2018probabilisticI} for its equivalent formulation in terms of $L$-differentiability which we adopt here.
	The relevance of this type of convexity in the convergence problem in mean field games was recently highlighted by \citeauthor*{jackson2023quantitative} \cite{jackson2023quantitative}.
	{Since verifying whether a target measure $\mufin$ admits a displacement convex penalty function might be a hard task, we will state our convergence result in terms of a weaker version of the above problem.
	In the weak version, the terminal condition $\P\circ  (X_T)^{-1}=\mufin$ is replaced by $\P\circ  (X_T)^{-1}\leq_{\rm cvx}\mufin$, where $\leq_{\rm cvx}$ denotes the convex ordering of probability measures.
	As we will see, the weak version of the problem is naturally embedded with the convexity required by our approach, see \Cref{lemma.reg.gphi}.
	In particular, there is no need to make extra assumptions on the measure $\mufin$ beyond the feasibility of the problems.}


	\medskip
	


In addition to investigating the large particle limit, we also derive
 new existence and characterization results for mean field Schr\"odinger bridges.
We show that for interaction functions that are nonlinear in the law of the state value, the mean field Schr\"odinger problem admits an optimal Schr\"odinger bridge that, in addition, can be characterize as the solution of a planning FBSDE.
This extends the results first obtained in \cite{backhoff2020mean} using very different arguments. 
While the focus of this work is on the case of \emph{interacting particles}, the approach (and some results) are new even for the classical problem \eqref{eq:dyn.SP}.
For instance, we will consider non--quadratic cost functions as well as general drifts.
Let us illustrate our main findings on the following extension of the entropic Schr\"odinger problem discussed so far.
\begin{align}
\label{eq:problem.ex.intro}
\begin{split}
 \Vc (\muin,\mufin):=\inf\bigg\{\E^\P\bigg[\int_0^T  f_1(t, \alpha_t) \diff t\bigg]:&  \; \d X_t =\big( \alpha_t - (\nabla \Psi \ast \P\circ (X_t)^{-1}) (X_t)\big)\d t +\d W_t, \\
	&  \; \P\circ (X_0)^{-1}=\muin,\, \P\circ  (X_T)^{-1}=\mufin  \bigg\},
\end{split}
\end{align}

\paragraph*{Main results for the at most quadratic Schr\"odinger problem.}

Our first set of results provides the existence of an MFSB and establishes a connection between the above problem and a class of McKean-Vlasov FBSDEs. \medskip

\begin{theorem}
\label{thm:existence.mfsb}
	Suppose $\Vc (\muin,\mufin)<\infty$. Let $\Psi: \RR^\xdim  \longrightarrow \RR$ be twice differentiable with Lipschitz--continuous first order derivative.
	Let $f_1:[0,T]\times \R^\xdim \longrightarrow \R^\xdim$ be continuously differentiable in the second argument, convex, positive, {satisfy $f_1(t,0)=0$}, and suppose there are $p>1$ and $C_1\in\R$, $C_2,\ell_{f_1}>0$, such that \( C_1+C_2\|a\|^p\leq f_1(t,a)\leq \ell_f +\ell_f \|a\|^2\).
	Then, 
	\begin{enumerate}[label=$(\roman*)$, ref=.$(\roman*)$,wide,  labelindent=0pt]
		\item	 the mean field entropic Schr\"odinger problem \eqref{eq:problem.ex.intro} admits a mean field Schr\"odinger bridge $\hat X$ with associated optimal control $\hat \alpha$;
		\item for $\Lambda(t,y)\in \argmin_{a\in \R^\xdim} \big \{ f_1(t,a)+a\cdot y\big\}$, the {\rm MFSB} $\hat X$ and its associated optimal control $\hat \alpha$ satisfy
	\[ \hat X \stackrel{\text{d}}{=} \mybar X, \text{ and, } {\color{black} \hat\alpha_t \stackrel{\text{d}}{=} \Lambda(t,\mybar Y_t)},\text{ for almost all } t\in [0,T],\]
	for $( \mybar X, \mybar  Y,  \mybar Z)$, defined on a probability space $(\mybar \Omega, \mybar \Fc, \mybar \P)$ supporting a Brownian motion $\mybar W$, satisfying the forward backward stochastic differential equation
	\begin{equation}\label{eq:fbsde.MFSP.charact}
		\begin{cases}
			\diff \mybar X_t = \big( \Lambda(t, \mybar Y_t) - \tilde \E [  \nabla \Psi (\mybar X_t-\tilde X_t)]\big)   \diff t + \diff 
			 \mybar W_t,\,  t\in [0,T]\\
			\diff \mybar Y_t = -\big( \tilde \E  [ \nabla^2 \Psi( \mybar X_t-\tilde X_t)\cdot \mybar Y_t]-\tilde \E [\nabla^2 \Psi (\tilde X_t-\mybar X_t)\cdot \tilde Y_t ]\big)  \diff t + \mybar Z_t\diff 
			 \mybar W_t ,\,  t\in [0,T)\\
			\mybar X_0\sim \muin,\quad \mybar X_T \sim \mufin,
		\end{cases}
	\end{equation}
	where $\tilde X$ and $\tilde Y$ denote independent copies of $ \mybar X$ and $\mybar Y$, respectively, on some probability space $(\tilde \Omega, \tilde \Fc, \tilde \P)$ and $\tilde \E$ denotes the associated expectation operator. In particular,
\[
\mybar M_t:= \mybar Y_t -\int_0^t\big( \tilde \E [  \nabla^2 \Psi ( \mybar X_r-\tilde X_r) \cdot \mybar Y_r ]-\tilde \E [\nabla^2 \Psi (\tilde X_r-\mybar X_r)  \cdot \tilde Y_r] \big) \diff r,
\]
is a {$(\mybar \P,\mybar \F^{\bar \P})$-martingale on $[0,T)$, where $\mybar\F^{\bar \P}$ denotes the $\mybar\P$-augmented filtration generated by $\mybar W$.}

		\item \label{thm:existence.mfsb.3} if $f_1\equiv \|a\|^2/2$ and $\Psi$ is symmetric,\footnote{ This is, $\Psi(x)=\Psi(-x), x\in \R^\xdim$.} the {\rm MFSB} $\hat X$ and its associated optimal control $\hat \alpha$ satisfy 
		$$\hat X \stackrel{\text{d}}{=} \mybar X,\; \hat\alpha_t \stackrel{\text{d}}{=} \mybar Y_t, \text{ for almost every } t\in [0,T],$$ 
		for $(\mybar X, \mybar Y, \mybar Z)$, defined on a probability space $(\mybar\Omega,\mybar \cF, \mybar \P)$ supporting a Brownian motion $\mybar W$, satisfying 
	\begin{equation*}
		\begin{cases}
			\diff \mybar X_t = \big(  \mybar Y_t - \tilde \E [  \nabla \Psi (\mybar X_t-\tilde X_t)]\big)   \diff t + \diff 
			 \mybar W_t,\, \textcolor{magenta}{t\in [0,T]}\\
			\diff \mybar Y_t =  \tilde \E [ \nabla^2 \Psi (\mybar X_t-\tilde X_t)\cdot (\mybar Y_t-\tilde Y_t) ] \diff t   + \mybar Z_t\diff 
			 \mybar W_t ,\, \camilo{ t\in [0,T)}\\
			\mybar X_0\sim \muin,\quad \mybar X_T \sim \mufin,
		\end{cases}
	\end{equation*}
	where, as in $(ii)$, $\tilde X$ and $\tilde Y$ denote independent copies of $ \mybar X$ and $\mybar Y$. In particular, $\mybar M$ is given by
	\[
	M_t:=  \mybar Y_t -\int_0^t  \tilde \E [ \nabla^2 \Psi (\mybar X_t-\tilde X_t)\cdot (\mybar Y_t-\tilde Y_t) ]   \diff r.
	\]
\end{enumerate}
\end{theorem}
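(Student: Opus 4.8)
The plan is to follow the penalization-and-maximum-principle strategy outlined in the introduction. First I would fix a smooth displacement-convex penalty $g$ associated with $\mufin$ (existence of such $g$ being guaranteed on the feasible set, cf.\ the discussion around \Cref{lemma.reg.gphi}) and study the penalized McKean--Vlasov control problem $\Vc^k(\muin)$ obtained from \eqref{eq:inf.entropic.problem-k.penalized} with the generalized running cost $f_1$ in place of $\tfrac12\|\alpha\|^2$. For fixed $k$, the Hamiltonian is $H(t,x,\mu,y,a) = a\cdot y - (\nabla\Psi\ast\mu)(x)\cdot y + f_1(t,a) + k\,(\text{terminal part handled as a boundary term})$; because $f_1(t,\cdot)$ is convex, positive, coercive (the lower bound $C_1+C_2\|a\|^p$), and dominated above by a quadratic, the map $a\mapsto f_1(t,a)+a\cdot y$ has a well-defined (measurable) minimizer $\Lambda(t,y)$, and the minimized Hamiltonian is attained there. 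Standard results on existence of optimal controls for McKean--Vlasov problems with convex coefficients (relaxed controls plus convexity to get a strict control, as in \cite{carmona2018probabilisticI}) give an optimal $\hat\alpha^k$ with optimal state $\hat X^k$, and the stochastic maximum principle of \cite{carmona2015forward} yields an adjoint process $(\hat Y^k,\hat Z^k)$ solving the McKean--Vlasov BSDE with driver built from $\nabla_x H$; the coupling $\hat\alpha^k_t = \Lambda(t,\hat Y^k_t)$ comes from the pointwise minimization condition in the SMP. Differentiating $-\nabla\Psi\ast\mu$ in $x$ and in the measure argument produces exactly the two terms $\tilde\E[\nabla^2\Psi(\mybar X_t-\tilde X_t)\cdot \mybar Y_t]$ and $\tilde\E[\nabla^2\Psi(\tilde X_t-\mybar X_t)\cdot\tilde Y_t]$ appearing in \eqref{eq:fbsde.MFSP.charact}, since the $L$-derivative of $\mu\mapsto (\nabla\Psi\ast\mu)(x)$ evaluated against $\mybar Y$ contributes the copy term with the sign flip coming from $\Psi(\tilde X-\mybar X)$ versus $\Psi(\mybar X-\tilde X)$.

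Next I would pass to the limit $k\to\infty$. By strong duality (the convex-duality argument referenced in the introduction giving $\Vc^k_{\rm e}\to\Vc_{\rm e}$, adapted to the $f_1$ cost), $\Vc^k\uparrow\Vc(\muin,\mufin)<\infty$, so the running costs $\E[\int_0^T f_1(t,\hat\alpha^k_t)\,dt]$ stay bounded; the coercive lower bound on $f_1$ then gives an $L^p$-bound on $\hat\alpha^k$ and hence tightness of the laws of $(\hat X^k,\hat\alpha^k)$ (the drift correction $\nabla\Psi\ast\cdot$ is Lipschitz, so moment and modulus-of-continuity estimates for $\hat X^k$ are uniform in $k$). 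Extract a limit point; lower semicontinuity of the (convex, positive) cost and the fact that $kg(\mathcal L(\hat X^k_T))$ is bounded force $g(\mathcal L(\hat X_T))=0$, i.e.\ $\mathcal L(\hat X_T)=\mufin$, so the limit is feasible for \eqref{eq:problem.ex.intro} and, by the duality lower bound, optimal — this proves $(i)$. For the characterization $(ii)$, I would carry the adjoint equation through the limit: one needs uniform (in $k$) estimates on $(\hat Y^k,\hat Z^k)$ — here the displacement convexity of $g$ and the convexity of $f_1$ are what keep the backward equation well-behaved as $k\to\infty$, since they prevent the Lipschitz constant of the effective terminal data from blowing up, which is precisely the obstruction flagged in the introduction. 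With such estimates, the martingale $\mybar M^k$ converges, its quadratic variation converges, and the limit $(\mybar X,\mybar Y,\mybar Z)$ solves \eqref{eq:fbsde.MFSP.charact} with the stated boundary data $\mybar X_0\sim\muin$, $\mybar X_T\sim\mufin$; the martingale property of $\mybar M$ on $[0,T)$ is inherited, the endpoint $T$ being excluded because the terminal adjoint condition is lost in the singular limit (it is replaced by the marginal constraint $\mybar X_T\sim\mufin$, a genuine planning-type boundary condition). Part $(iii)$ is then just the specialization $f_1(t,a)=\tfrac12\|a\|^2$, for which $\Lambda(t,y)=-y$... wait, one must be careful with sign conventions: with $f_1(t,a)+a\cdot y$ the minimizer is $\Lambda(t,y)=(\nabla f_1(t,\cdot))^{-1}(-y)$, giving $\Lambda(t,y)=-y$; reconciling this with the stated $\hat\alpha_t\stackrel{d}{=}\mybar Y_t$ just amounts to the sign convention in the SMP adjoint (replacing $\mybar Y$ by $-\mybar Y$), after which symmetry of $\Psi$ makes $\nabla^2\Psi$ even and collapses the two drift terms in the $\mybar Y$-equation into the single term $\tilde\E[\nabla^2\Psi(\mybar X_t-\tilde X_t)\cdot(\mybar Y_t-\tilde Y_t)]$.

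The main obstacle I anticipate is the uniform-in-$k$ control of the adjoint process $(\hat Y^k, \hat Z^k)$ and the identification of its limit: the BSDE in \eqref{eq:fbsde.MFSP.charact} is of McKean--Vlasov type with a driver that is linear in $(\mybar Y,\tilde Y)$ but couples the law of $\mybar Y$ nontrivially, and as $k\to\infty$ the natural terminal condition degenerates. Getting compactness for $\hat Z^k$ (not just $\hat Y^k$) and showing the limiting pair still satisfies a genuine FBSDE — rather than merely an inclusion or a relaxed formulation — is the delicate point; this is where I would lean hardest on displacement convexity, which should yield a monotonicity/a priori estimate making the forward-backward system uniquely solvable with $k$-independent bounds (in the spirit of \cite{jackson2023quantitative}), and on the at-most-quadratic upper bound on $f_1$, which guarantees $\Lambda(t,\cdot)$ is of linear growth so that the forward drift stays Lipschitz-controllable. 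A secondary technical point is justifying the differentiation of $\mu\mapsto\nabla\Psi\ast\mu$ in the Wasserstein sense and the resulting form of the adjoint driver, but with $\nabla\Psi$ Lipschitz and $\nabla^2\Psi$ bounded this is routine via \cite[Chapter 5]{carmona2018probabilisticI}.
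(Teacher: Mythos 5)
Your overall architecture (penalize the terminal constraint, solve each penalized McKean--Vlasov problem via the stochastic maximum principle, then let $k\to\infty$) is exactly the paper's, and your treatment of part $(i)$ — coercivity of $f_1$ giving tightness, $kg(\cL(\hat X^k_T))$ bounded forcing $\cL(\hat X_T)=\mufin$, lower semicontinuity plus weak duality giving optimality — matches the paper's \Cref{lemma:infty.problem-k.penalized}, \Cref{eq:lemma.k} and the first half of the proof of \Cref{thm:existence.mfsb.body.paper}.

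The gap is in how you propose to pass to the limit in the adjoint equation for part $(ii)$. You plan to obtain uniform-in-$k$ estimates on $(\hat Y^k,\hat Z^k)$ by invoking displacement convexity of the penalty $g$. But \Cref{thm:existence.mfsb} assumes no such structure on $\mufin$: displacement convexity enters the paper only for the $N$-particle convergence (\Cref{thm:convergence.mfsb}, \Cref{rmk:convergence}), precisely because the existence of a displacement convex penalty for a given $\mufin$ is unclear; and the terminal datum $k\partial_\mu g(\cL(\hat X^k_T))(\hat X^k_T)$ has a Lipschitz constant proportional to $k$, so no standard FBSDE a priori bound survives the limit — this is the obstruction the paper itself flags in \Cref{rmk.convergenceN.regularity}. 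The paper's actual route never obtains compactness of $\hat Z^k$ at all. Instead it exploits the first-order optimality relation $\hat Y^k_t=-\partial_a f_1(t,\hat\alpha^k_t)$: the coercivity bound $f_1(t,a)\ge C_1+C_2\|a\|^p$ and the linear growth of $\partial_a f_1$ (a consequence of the at-most-quadratic upper bound) turn the uniform cost bound $\E[\int_0^T f_1(t,\hat\alpha^k_t)\,\diff t]\le C$ into a uniform $\L^p([0,T]\times\Omega)$ bound on $\hat Y^k$, hence \emph{weak} $\L^p$ compactness only. The limit drift is then identified through weak convergence on a dense set of times (\Cref{lem:weakLpprojection}), the martingale property of $\mybar M$ is transferred using weak convergence of the filtrations $\mybar\F^k\to\mybar\F$ in the sense of Coquet et al.\ together with a backward-martingale/uniform-integrability argument to pass to right limits on $[0,T)$, and $\mybar Z$ is recovered only at the very end by martingale representation. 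Without replacing your uniform-estimate step by an argument of this kind (or by an assumption on $\mufin$ that the theorem does not make), your proof of $(ii)$ does not close.
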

\Cref{thm:existence.mfsb} ensures the existence of an MFSB for \eqref{eq:problem.ex.intro}. 
	In addition, its associated optimal control is given in terms of a solution to the FBSDE system \eqref{eq:fbsde.MFSP.charact}. 
	Recalling that in the case of non-interacting particles the optimal control is a martingale, see \citeauthor*{lehec2013representation} \cite[Lemma 11]{lehec2013representation}, we complement our results by identifying the form of the martingale process associated with the optimal control in the case the particles interact through the potential $\nabla \Psi$.
	{We remark that under assumptions of $\muin$ and $\mufin$ that guarantee feasibility, i.e., $\Vc_{\rm e}(\muin,\mufin)<\infty$, {\rm \Cref{thm:existence.mfsb}\ref{thm:existence.mfsb.3}}, which specializes the results to the entropic case \eqref{MFSP:entropic}, had already appeared in \cite[Theorem 1.3]{backhoff2020mean}, see the Assumption ${\rm (H2)}$ and Equation $(4)$ therein.} 
	Observe that, similar to \cite[Theorem 1.3]{backhoff2020mean}, the second statement above {\color{black}provides a characterization of an optimizer.} 
	However, our results go well beyond the entropic problem. 
	In fact, our analysis in \Cref{sec:MFSP} generalizes \Cref{thm:existence.mfsb} to a class of Schr\"odinger problems beyond \eqref{eq:problem.ex.intro}, see \Cref{thm:existence.mfsb.body.paper} for details. 
	In particular, we show that the MFSB and the associated optimal control can be derived as limits (in law) of a sequence of solutions of McKean--Vlasov equations derived from \eqref{eq:inf.entropic.problem-k.penalized}.
	Said result holds for a fairly general class of drifts and for cost functionals that are not necessarily quadratic in the control process and are allowed to depend on the state and law of the controlled process.
	{\color{black}Even so, \Cref{thm:existence.mfsb.body.paper} is interesting not only because of its generality but also because it ellucidates a legitimate method to compute MFSBs by means of McKean--Vlasov equations, objects for which there are known theoretical guarantees for numerical implementations, see for instance the recent results in \citeauthor*{baudelet2023deep} \cite{baudelet2023deep}, \citeauthor*{han2022learning} \cite{han2022learning}, as well as the review of \citeauthor*{lauriere2021numerical} \cite{lauriere2021numerical}.
		What's more, the same reasoning reveals a procedure to numerically approximate solutions to \eqref{eq:fbsde.MFSP.charact}.
		This should be contrasted with the naïve approach of trying to solve \eqref{eq:fbsde.MFSP.charact} directly for which, to the best of our knowledge, there is no theoretical analysis or numerical implementations at this level of generality.} 


\medskip

Our second result is about the convergence of a \emph{weaker} version of the finite particle problem \eqref{eq:N.SP.intro}, to its mean field counterpart obtained from \eqref{MFSP:entropic}.
	By placing the condition $\P\circ  (X_T)^{-1}\leq_{\rm cvx}\mufin$ on the terminal laws instead of $\P\circ  (X_T)^{-1}=\mufin$, we introduce and establish the convergence of $\Vc_c^N(\muin,\mufin)$ to its mean field counterpart $\Vc_c(\muin,\mufin)$.

\begin{theorem}
\label{thm:convergence.mfsb}
Suppose $\Vc_c(\muin,\mufin), \Vc_c^N(\muin,\mufin)<\infty$, $N\geq 1$. Let $a\longmapsto f_1(t,a)$ be uniformly strongly convex and $\Psi: \RR^\xdim  \longrightarrow \RR$, $\Psi(x)=\|x\|^2/2$.
	Then, we have that
	\begin{equation*}
		\lim_{N\to \infty}\Vc^N_c(\muin,\mufin) = \Vc_c (\muin,\mufin).
	\end{equation*}
\end{theorem}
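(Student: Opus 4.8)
The strategy is to transfer the convergence problem to a "forward-backward propagation of chaos" statement via the stochastic maximum principle, following the penalization scheme sketched in the introduction. First I would pass to the penalized problems: fix a displacement-convex smooth penalty function $g$ adapted to the convex-ordering constraint $\P\circ(X_T)^{-1}\leq_{\rm cvx}\mufin$ (the existence of such a $g$ is the content of the cited \Cref{lemma.reg.gphi}, which is precisely why one works with the weak formulation), and for each $k\geq 1$ introduce the penalized mean field problem $\Vc_c^{k}$ and its $N$-particle analogue $\Vc_c^{N,k}$. By strong duality one has $\Vc_c^{k}\uparrow\Vc_c$ and $\Vc_c^{N,k}\uparrow\Vc_c^N$ as $k\to\infty$, uniformly enough in $N$ (this uniformity must be checked carefully, using that the penalty is the same across $N$ and that the feasible costs are bounded). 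The plan is then to prove $\Vc_c^{N,k}\to\Vc_c^{k}$ as $N\to\infty$ for each fixed $k$, with an error bound that is summable/controllable in $k$, and conclude by a diagonal argument.

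For fixed $k$, I would apply the Carmona–Delarue stochastic maximum principle \cite{carmona2015forward} to characterize the optimizers of both $\Vc_c^{k}$ and $\Vc_c^{N,k}$ as solutions of McKean–Vlasov FBSDEs (the mean field one) and coupled FBSDE systems of McKean–Vlasov type (the particle one — note the caveat emphasized in the introduction that the particle system is itself of McKean–Vlasov type because of the penalization). Here the quadratic choice $\Psi(x)=\|x\|^2/2$ is crucial: then $\nabla\Psi(x)=x$ and $\nabla^2\Psi\equiv\mathrm{Id}$, so the interaction in the forward equation becomes the linear term $\alpha_t^i-(X_t^i-\frac1N\sum_j X_t^j)$ and the backward adjoint equations have constant (identity) coefficients in the $Y$-variables, which makes the FBSDE systems essentially linear in the state/adjoint pair up to the control $\Lambda(t,\cdot)$. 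Uniform strong convexity of $a\mapsto f_1(t,a)$ guarantees that $\Lambda(t,\cdot)$ is single-valued and Lipschitz, and — more importantly — it provides the monotonicity needed to get well-posedness and stability estimates for these FBSDEs that are \emph{uniform in $k$}: this is exactly the displacement-convexity mechanism highlighted in \cite{jackson2023quantitative}, here available for free thanks to \Cref{lemma.reg.gphi}.

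The core estimate is then a quantitative forward-backward propagation of chaos: compare the particle optimizers $(\bar X^{i,N,k},\bar Y^{i,N,k},\bar Z^{i,N,k})$ to i.i.d.\ copies of the mean field optimizer $(\bar X^{k},\bar Y^{k},\bar Z^{k})$ driven by the same Brownian motions (synchronous coupling), write the SDE for the differences, apply Itô to $\sum_i(\|\Delta X^i\|^2+\|\Delta Y^i\|^2)$, and use the monotonicity coming from strong convexity together with the linearity of the $\Psi$-terms to absorb the interaction cross-terms; the empirical-measure fluctuation term $\frac1N\sum_j X_t^j - \E[\bar X_t^k]$ is controlled by a standard law-of-large-numbers rate $O(N^{-1/2})$ (or a Wasserstein-convergence rate depending on dimension). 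A Grönwall argument then yields $\EE\big[\sup_t\|\bar X_t^{i,N,k}-\bar X_t^{i,k}\|^2\big]\to 0$ with a rate whose $k$-dependence is controlled by the uniform-in-$k$ constants, and the value convergence $|\Vc_c^{N,k}-\Vc_c^{k}|\to 0$ follows by plugging the near-optimal controls into each other's problems and estimating the cost gap (including the penalty term, where one uses $L$-Lipschitzness of $g$). The main obstacle I anticipate is precisely the \emph{uniformity in $k$}: one must ensure that the FBSDE well-posedness/stability constants, and hence the propagation-of-chaos rate, do not blow up as $k\to\infty$, so that the diagonal limit $N\to\infty$, $k\to\infty$ can be taken; this is where displacement convexity of the penalty (rather than mere smoothness) does the essential work, and where a careful bookkeeping of how $k$ enters only through the terminal adjoint condition $\bar Y_T^k = k\,\partial_\mu g(\mathcal{L}(\bar X_T^k))(\bar X_T^k)$ — which is kept in check by the convex ordering constraint — is needed.
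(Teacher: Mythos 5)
Your overall architecture (penalize, apply the stochastic maximum principle, synchronous coupling, monotonicity at the terminal time from displacement convexity plus strong convexity of $f_1$, Gr\"onwall, law of large numbers) is the one the paper uses, and you correctly identify why $\Psi(x)=\|x\|^2/2$ and the strong convexity of $f_1$ matter. However, there are two concrete gaps. First, your starting point --- ``fix a displacement-convex smooth penalty function $g$ adapted to the convex-ordering constraint, whose existence is the content of \Cref{lemma.reg.gphi}'' --- is not what that lemma provides. The natural penalty for the constraint $\Lc(X_T)\leq_{\rm cvx}\mufin$ is $g_c(\mu)=\Wc_c(\mu,\mufin)=\sup_{\varphi\in\Phi}\{\mu(\varphi)-\mufin(\varphi)\}$, which is \emph{not} $L$-differentiable, so the maximum principle cannot be applied to $V_c^{k}$ directly. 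The paper's essential device is a second dualization: by \eqref{eq.dualitywot.bary} one replaces $g_c$ by the \emph{linear} penalties $g_c^\varphi(\mu)=\mu(\varphi)-\mufin(\varphi)$ indexed by convex $\varphi$, obtains the two-parameter family $V_c^{k,\varphi}$, and interchanges $\sup_\varphi$ and $\inf_\alpha$ (\Cref{lemma.conv.existence}\ref{lemma.conv.existence.ii}). Only each $g_c^\varphi$ is smooth and displacement convex (\Cref{lemma.reg.gphi}). This also removes the ``caveat'' you flag: after the $\varphi$-linearization the terminal adjoint condition in \eqref{eq:N.fbsde.proof.main.c} is $Y_T^{i}=k\nabla\varphi(X_T^{i})$, a local function of $X_T^i$ rather than of its law, which is precisely what lets the synchronous-coupling propagation of chaos (\Cref{thm.prop.chao.c}) go through with constants independent of $k$ and $\varphi$.

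Second, you correctly anticipate that uniformity in $k$ of the propagation-of-chaos rate is the main obstacle for a diagonal argument, but you do not resolve it, and in fact the paper avoids needing it altogether by choosing the order of limits differently. For the lower bound one writes, for fixed $(k,\varphi)$, $V_c^N-V_c\geq (V_c^{N,k,\varphi}-V_c^{k,\varphi})+(V_c^{k,\varphi}-V_c^{k})+(V_c^{k}-V_c)$ using only weak duality $V_c^N\geq V_c^{N,k,\varphi}$; one sends $N\to\infty$ first at fixed $(k,\varphi)$ (where the rate may depend on $k,\varphi$ through the limiting law in \eqref{eq:fbsde.MFSP.proof.c}) and only then takes suprema over $\varphi$ and $k$. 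For the upper bound one does not use the penalized particle optimizers at all: one plugs i.i.d.\ copies of a near-optimal mean-field control into the $N$-particle problem and uses the law of large numbers. Your claim that ``$\Vc_c^{N,k}\uparrow\Vc_c^N$ uniformly enough in $N$'' is both unproved and unnecessary; without replacing it by the weak-duality argument above, the proposed diagonal limit does not close.
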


As alluded to above, our arguments reduce the convergence problem to a type of forward-backward propagation of chaos result which does not follow from standard results since the terminal condition of the $N$-particle system depends on the law of the forward process $X$ due to constraint on the law of $X_T$. 
	\Cref{thm:convergence.mfsb} presents one scenario under which the finite particle approximation holds. 
	{The result holds for arbitrary finite time horizons and, by virtue of working with the weaker problems, does not require any other additional assumption beyond the feasibility of the problems.
	Back in the problems $V_{\rm e}(\muin,\mufin)$ and $V^N_{\rm e}(\muin,\mufin)$, the convexity inherent to the weaker problem is in general absent and the finite particle approximation holds under an assumption on the target measure, see {\rm \Cref{rmk:convergence}}.} 
	As with our characterization, the analysis in \Cref{sec:finiteSPapprox} extends \Cref{thm:convergence.mfsb} to a class of Schrödinger problems beyond \eqref{MFSP:entropic}, see \Cref{thm.prop.chao.c} for details.
The reader familiar with mean field problems might recognize the role of the previous convex condition but wonder whether there are other known scenarii in which the convergence is possible. 
Indeed, in the study of the convergence problem in mean field games (and uniqueness of mean field games), either small time conditions or monotonicity/convexity conditions are always needed to achieve strong convergence results \cite{cardaliaguet2015master,jackson2023quantitative,lauriere2021convergence}.
In our setting, the FBSDE associated with $V^{N,k}_{\rm e}$ is also well-behaved over a small time horizon, said $T_o^k$. 
Yet, the value of $T_o^k$ depends on the Lagrange multiplier $k$ (appearing in the terminal condition), which makes this approach unviable since, as explained above, $k$ needs to go to infinity eventually. 
	 We elaborate more on this in \Cref{rmk.convergenceN.regularity}.

\paragraph*{Organization of the paper.}
The rest of this paper is organized as follows: we dedicate the rest of this section to fixing some frequently used notation. 
	\Cref{sec:MFSP} begins presenting the formulation of the mean field Schr\"odinger problem and the standing assumptions on the data of the problem. It also introduces the penalization argument. \Cref{sec:mfspexistence} provides the proofs of the existence result as well as the characterization in the general setting, see \Cref{thm:existence.mfsb.body.paper}, which we then specialize to the case of \Cref{thm:existence.mfsb}. 
	\Cref{sec:finiteSPapprox} introduces the finite particle Schr\"odinger problem and presents the strategy for the analysis. \Cref{sec.approx.disp.conv} presents the proof of the finite particle approximation for the weaker problems, \Cref{thm.prop.chao.c}. \Cref{sec.proof.entropic.conv} brings the analysis back to the at-most quadratic problem presenting the proof of \Cref{thm:convergence.mfsb}.
	An Appendix section includes some technical auxiliary results. 

\paragraph*{Notation.} Let $p$ be a positive integer.
	For $(\alpha,\beta) \in \R^p\times\R^p$, $\alpha\cdot \beta$ denotes their inner product, with associated norm $\|\cdot\|$, which we simplify to $|\cdot|$ when $p$ is equal to $1$. 
	$I_p$ denotes the identity matrix of $\R^{p\times p}$.
 	{Given $f:[0,T]\times\R^p \longrightarrow \R$ continuously differentiable in the second argument, 
	we say that the mapping $x\longmapsto f(t,x)$ is uniformly Lipschitz smooth (with constant $\ell_{f}>0$) if $|\partial_x f(t,x)-\partial_x f(t,y) |\leq \ell_{f} \|x-y\|$, for all $t\in [0,T]$, $x,y\in \R^p$.
	The mapping $x\longmapsto f(t,x)$ is uniformly strongly convex (with constant $\lambda_{f}>0$) if $(\partial_x f(t,x)-\partial_x f(t,y))\cdot (x-y)\cdot  \geq \lambda_{f} \|x-y\|^2$, for all $t\in [0,T]$, $x,y\in \R^p$.
	Moreover, we denote by $(f_1(t,\cdot))^\star (y)$ its convex conjugate in the second coordinate, i.e., $(f_1(t,\cdot))^\star (y)=\sup_{x\in \R^p} \{ x\cdot y -f(x)\}$.}\medskip
	
	Let $E$ be an arbitrary Polish space with a metric $d_E$. 
	Given a positive integer $\ell$ and a vector $x \in E^\ell$, we will denote by
\[
L^\ell(x):=\frac1{\ell}\sum_{j=1}^{\ell}\delta_{x^j},
\]
the empirical measure associated with $x$. 
	We denote by $\Pc(E)$ the set of probability measures on $E$ (endowed with $\Bc(E)$, the Borel $\sigma$-algebra for the topology induced by $d_E$) and by $\Pc_p(E)$ the subset of $\Pc(E)$ containing measures with finite $p$-th moment. 
	Notice then that for any $x\in E^\ell$, we have $L^\ell(x)\in\Pc_p(E)$. 
	{For $\nu\in\Pc(E)$, and $\varphi:E\longrightarrow \R$, we set $ \mu( \varphi):= \int_{E} \varphi (e)\mu(\d e)$}.
	On a product space $E_1\times E_2$, $\pi^i$ denotes the projection on the $i$-th coordinate and for $\nu\in \Pc(E_1\times E_2)$, $\pi^i_{\#}\nu$ denotes the pushforward measure, i.e. $\pi^i_{\#}\nu (A):=\nu\circ (\pi^i)^{-1}(A)\in \Pc(E_i)$, for $A\in E_i$.
	$\Pc_p(E)$ is always endowed with the distance $\cW_{p}$ define next. 
	For $\mu, \nu\in \Pc_p(E)$, we denote by $\cW_{p}(\mu, \nu)$ the $p$-Wasserstein distance between $\mu$ and $\nu$, that is, letting $\Gamma(\mu,\nu)$ be the set of all couplings $\pi$ of $\mu$ and $\nu$, i.e., probability measures on $(E^2,\Bc(E)^{\otimes 2})$ with first and second marginals $\mu$ and $\nu$, respectively, we have that
\begin{equation*}
	\cW_{p}(\mu, \nu) := \bigg(\inf_{\pi\in\Gamma(\mu,\nu)}\int_{E\times E}d_E(x,y)^p\pi(\mathrm{d}x,\mathrm{d}y) \bigg)^{1/p}.
\end{equation*}
For a random variable $X$ on $(\Omega,\Fc,\P)$, we write $X\sim \mu$ whenever $\Lc(X)=\mu$, where $\cL(X)$ denotes the law of a random variable $X$ with respect to $\PP$. Given random variables $X, Y$, we write $X\stackrel{d}{=}Y$ whenever they are equal in distribution.

\section{Problem statement and penalization}\label{sec:MFSP}
As already mentioned in the introduction, thanks to the flexibility afforded by our method, we will actually prove more general results than announced.
	In particular, we will allow the interaction potential and cost to be generic, possibly nonlinear functions of the measure argument.
	Let us thus present the general class of problems under consideration.\medskip

Fix a nonnegative integer $m$ and a function $f_1:[0,T]\times A \longrightarrow \R$, we let $\mathfrak{A}$ be the set defined as 
\begin{equation*}
	\mathfrak{A} := \bigg\{\alpha:[0,T]\times \Omega \longrightarrow A,\,\, \FF\text{--progressive processes such that } \EE\bigg[\int_0^Tf_1(t,\alpha_t) \diff t\bigg]<\infty\bigg\},
\end{equation*}
with $A$ a given nonempty, convex, and closed subset of $\mathbb{R}^m$.\footnote{In the typical example, $A=\R^\xdim$.}
 Given two probability distributions $\muin, \mufin \in \mathcal{P}_2(\mathbb{R}^m)$, we are interested in the general stochastic optimal transport problem
\begin{equation}
\label{eq:problem}
	V(\muin, \mufin) := \inf\bigg\{\EE\bigg[\int_0^Tf(t, X^\alpha_t, \alpha_t,\mathcal{L}(X^\alpha_t))\diff t \bigg]: \alpha \in \mathfrak{A},\,  X_0^\alpha\sim\muin,\, X_T^\alpha \sim   \mufin \bigg\},
\end{equation}
where $X^\alpha$ is the unique strong solution of the controlled stochastic differential equation
\begin{equation}
\label{eq:control.SDE.MF}
	X^\alpha_t = X_0^\alpha + \int_0^t\alpha_u+b(u, X^\alpha_u, \mathcal{L}(X^\alpha_u))\diff u + \sigma W_t,\; t\in [0,T],
\end{equation}
where $\sigma\in  \R^{\xdim\times d}$. The assumptions on the functions $b$, $f_1$ and $ f$ as well as the matrix $\sigma$ will be introduced below.

\begin{definition}
	Given $\muin, \mufin \in \cP_2(\RR^\xdim)$ we call $V(\muin,\mufin)$, with some abuse of language, the value of the mean field Schr\"odinger problem and when the cost $f$ is given by $f\equiv \|a\|^2/2$, then $V(\muin,\mufin)$ is called the mean field entropic transportation cost.
	Moreover, the optimal states of $V(\muin,\mufin)$ are mean field Schr\"odinger bridges.
\end{definition} 	

Problem \eqref{eq:problem} means that we are looking for the ``best trajectory'', in the sense of minimal cost $f$, to transport the probability distribution $\muin$ to $\mufin$. {Throughout the paper, we make the assumption $V(\muin, \mufin)<\infty$, i.e., \eqref{eq:problem} is feasible, and refer to \Cref{lemma.feasibility} and \cite{backhoff2020mean} for sufficient conditions on $b$ and $\muin$ and $\mufin$ guaranteeing it.} \medskip

\begin{remark}\label{remark:examples}
We make a few comments on the nature of {\rm Problem \eqref{eq:problem}}$:$
\begin{enumerate}[label=$(\roman*)$, ref=.$(\roman*)$,wide,  labelindent=0pt]
	\item In light of the presence of the terminal condition, we may interpret $V(\muin,\mufin)$ as a planning McKean--Vlasov stochastic control problem, i.e., the cooperative counterpart of the mean field game planning problem introduced in {\rm \citeauthor*{lions2007theorie} \cite{lions2007theorie}}. \label{rmk.examples.i}
	\item When $f(t, x, a,\mu)= \|a\|^2/2$, and, $b(t, x, \mu) = -\int_{\R^m}\nabla\Psi(x-\tilde x)\mu(\d \tilde x)$, then Problem \eqref{eq:problem} becomes the mean field entropic Schr\"odinger problem \eqref{MFSP:entropic}. 
	If in addition $b=0$, it becomes Problem \eqref{eq.dyn.SP.control}.
These two settings will serve as our standard examples.\label{rmk.examples.iii}
\end{enumerate}
\end{remark}

We now present the standing assumptions used in this article. 
	To that end, let us recall the notion of $L$-derivative on the space of probability measures, see for instance \citeauthor*{lions2007theorie} \cite{lions2007theorie} or~\citeauthor*{carmona2018probabilisticI} \cite[Chapter 5]{carmona2018probabilisticI} for further details. 
	The definition is based on the notion of the lifting of a map $\phi:\mathcal{P}_2(\mathbb{R}^m)\longrightarrow \mathbb{R}$ to functions $\Phi$ defined on the Hilbert space $\L^2(\tilde \Omega ,\tilde \Fc ,\tilde \P;\mathbb{R}^m)$, over some atomless probability space $(\tilde \Omega,\tilde \Fc,\tilde \P)$, by setting $\Phi:\L^2(\tilde \Omega,\tilde \Fc,\tilde \P;\mathbb{R}^m)\longrightarrow \mathbb{R}$, $\Phi(\xi)=\phi(\Lc(\xi))$. 


\begin{definition}\label{def.L.diff}
A function $\phi:\mathcal{P}_2(\mathbb{R}^m)\longrightarrow \mathbb{R}$ is said to be continuously $L$-differentiable if the lifted function $\Phi$ is continuously Fréchet differentiable with derivative $\partial_\xi \Phi:\L^2( \Omega,  \Fc, \P;\mathbb{R}^m)\longrightarrow \L^2( \Omega,  \Fc, \P;\mathbb{R}^m)$. 
	In this case, there is a measurable function $\partial_\mu \phi(\mu): \mathbb{R}^m\longrightarrow \mathbb{R}^m$, which we call the L-derivative of $\phi$, which for $\xi, \zeta\in \L^2(  \Omega,  \Fc,  \P;\mathbb{R}^m)$, $\Lc(\xi)=\mu$, satisfies, $\partial_\mu \phi(\mu)(\xi)=\partial_\xi \Phi(\xi)$, and 
\[
\phi(\Lc(\xi+\zeta))-\phi(\mu)= \E[ \partial_\mu \phi(\mu)(\xi)\cdot \zeta]+o(\|\zeta\|_{\L^2}).
\]
\end{definition}

In particular, the derivative is uniquely defined up to an additive constant. 
	To ease the notation, the above definition of $L$-differentiability assumes the lifting is defined on the same probability space in which the mean field Schrödinger problem was defined, but any probability space supporting a random variable with uniform distribution on $[0, 1]$ can be used.


\begin{assumption}
We assume the matrix $\sigma$ is nonzero as well as:
\label{ass.b.f}
	\begin{enumerate}[label=$(\roman*)$, ref=.$(\roman*)$,wide,  labelindent=0pt]
		\item \label{ass.b.f:1}The function $b:[0,T]\times \RR^\xdim \times  \cP_1(\RR^\xdim)\longrightarrow \RR^m$ is Borel--measurable, continuously differentiable, Lipschitz--continuous uniformly in $t$ and with linear growth in its last two arguments. 
		That is,
		\begin{align*}
			\|b(t, x,\mu) - b(t, x', \mu')\| \le \ell_b\big( \|x-x'\| + \cW_1(\mu,\mu')\big),\; \|b(t, x, \mu)\| &\le \ell_b\Big(1+ \|x\| + \Big(\int_{\RR^\xdim}\|x\|\mu(\d x)\Big) \Big),
		\end{align*}
		for all $t \in [0,T]$, $(x,\mu), (x', \mu') \in \RR^\xdim \times \cP_1(\RR^\xdim)$ and some constant $\ell_b>0$.
		\item \label{ass.b.f:2} The function $f:[0,T]\times \RR^\xdim \times A \times \cP_2(\RR^\xdim)\longrightarrow \RR$ is Borel--measurable and with quadratic growth. That is,
	\begin{equation}\label{eq.qgrowth}
		|f(t, x, a,\mu)|	\le \ell_f\Big(1 +  \|x\|^2 + \|a\|^2 + \int_{\RR^\xdim}\|x\|^2\mu(\d x)\Big),
	\end{equation}
	for all $(t,x,a,\mu) \in [0,T] \times \RR^\xdim \times A\times \cP_2(\RR^\xdim)$ and some constant $\ell_f>0$.
	The function $f$ admits the decomposition 
	\begin{equation*}
	 	f(t, x, a, \mu) = f_1(t, a) + f_2(t, x, \mu),
	 \end{equation*}
	where, $f_2$ is bounded from below and continuously differentiable in the last two arguments, and, $f_1$ is continuously differentiable in the second argument, convex, positive, and {satisfies $f_1(t,0)=0$}. Moreover, there is $p>1$ and $C_1\in\R,C_2>0$, such that 
		$$f_1(t,a)\geq C_1+C_2\|a\|^p.$$
\end{enumerate}
\end{assumption}

With this, we can introduce the Hamiltonian functionals $H_1:[0,T]\times\R^\xdim\longrightarrow \R$, $H_2:[0,T]\times \RR^\xdim\times \RR^\xdim\times \mathcal{P}_2(\RR^\xdim)\longrightarrow \RR^m$ and the associated first-order sensitivity functional $F:[0,T]\times \RR^\xdim\times \RR^\xdim\times \mathcal{P}_2(\RR^\xdim\times \RR^\xdim)\longrightarrow \RR^m$, given by
	\begin{equation}
	 	\label{eq:def.H}
	 		H_1 (t, y): = \inf_{a\in A} \big\{ f_1 (t, a ) + a\cdot y\big\}, 
	\end{equation}
$H_2(t,x,y,\mu):=f_2(t,x,\mu)+ b (t, x,\mu )\cdot y$, and,
\begin{equation}
	 	\label{eq:def.F}
	 		F(t,x,y,\nu) :=  \partial_x H_2 (t, x, y,\pi^1_{\#} \nu) 
		+\int_{\RR^\xdim\!\times \R^m}\partial_\mu H_2 (t, \tilde x  ,\tilde y,\pi^1_{\#} \nu )(x) \nu (\d \tilde x\times \d \tilde y) .
\end{equation}

Our analysis of the finite particle approximation of Problem \eqref{eq:problem} in \Cref{sec:finiteSPapprox} requires the following additional set of assumptions.

\begin{assumption}\label{ass.F}	
\begin{enumerate}[label=$(\roman*)$, ref=.$(\roman*)$,wide,  labelindent=0pt]
	\item \label{ass.F:0} The map $a\longmapsto f_1(t,a)$ is uniformly strongly convex with constant $\lambda_{f_1}>0$.

	\item \label{ass.F:1} The map $ (x,\mu)\longmapsto f_2(t,x,\mu)$ is uniformly Lipschitz smooth, and $F$ is Lipschitz--continuous uniformly in $t$ and with linear growth in its last three arguments with Lipschitz--constant $\ell_F>0$.
\end{enumerate}
\end{assumption}

\begin{remark}\label{rmk.assumptions}
	Let us comment on the previous set of assumptions. 
	\begin{enumerate}[label=$(\roman*)$, ref=.$(\roman*)$,wide,  labelindent=0pt]
	\item \label{rmk.assumptions.1} {\rm \Cref{ass.b.f}\ref{ass.b.f:1}} imposes Lipschitz and growth assumptions of the interaction term $b$ so that \eqref{eq:control.SDE.MF} admits a unique strong solution $X^\alpha$. 
	The growth conditions of $f$ in {\rm \Cref{ass.b.f}\ref{ass.b.f:2}} are motivated by our baseline examples, see {\rm \Cref{remark:examples}}.
	{In particular, $a\longmapsto \partial f_1(t,a)$ has linear growth, see {\rm \cite[Theorem 6.7]{evans2015measure}, and the infimum in \eqref{eq:def.H} is attained on a compact set}. 
	Our choice of the $\Wc_1$ norm for the measure term in $b$, as well as the convexity and coercivity conditions of $f_1$, would allow us to obtain the existence of optimizers and {\rm MFSBs} without having to impose compactness on the set {\color{black}$A$} where the controls take values. 
	In general, to use $\Wc_{p^\prime}$ instead on $\Wc_1$, it suffices to take $p>p^\prime \geq1$.

\item \label{rmk.assumptions.3} {\rm \Cref{ass.F}} strengthens the regularity of the cost function, which is easily verified in our benchmark example introduced in {\rm \Cref{remark:examples}}, see {\rm \Cref{sec:proofexistenceentropic}}.
	In particular, we remark for future reference that under {\rm \Cref{ass.F}\ref{ass.F:0}}\footnote{See {\rm \citeauthor{rockafellar2009variational} \cite[Proposition 12.60]{rockafellar2009variational}} and {\rm \citeauthor{rockafellar1970convex} \cite[Theorem 23.5]{rockafellar1970convex}}.}  
	\begin{align}\label{eq.lambda}
	y\longmapsto H_1(t,y) \text{ is differentiable and } \Lambda:[0,T] \times \mathbb{R}^\xdim  \longrightarrow A,\, \Lambda(t, y) := \partial_y H_1(t, y),
	\end{align}  
is Borel--measurable and Lipschitz--continuous and with linear growth in the last argument with Lipschitz--constant $\lambda_{f_1}^{-1}>0$.
	Finally, as we will make use of the maximum principle for McKean-Vlasov control problems, {\rm \Cref{ass.F}\ref{ass.F:1}} imposes standard linear growth and Lipschitz conditions.}
	
\item \label{rmk.assumptions.2} {The above setting encompasses cost function beyond the quadratic cost. For instance, possible choices of $f_1$ are $f_a(x):=C_2 |x|^p$, $p\in(1,2)$, $f_b(x):=x^2+\log (x+\sqrt{1+x^2})$, $f_c (x):=\int_{x^o}^x h(y)\d y$ for $h$ nondecreasing, of at most linear growth and continuous with points of non-differentiability, and $f_d(x):=Mx^2+h(x)$ for any $h$ twice differentiable with second derivative bounded by $M$. Note $f_b$ and $f_d$ are strongly convex.
}
\end{enumerate}
\end{remark}

Let us now explain our strategy to analyse Problem \eqref{eq:problem}. In the language of stochastic control, the terminal cost of the stochastic optimal transport problem \eqref{eq:problem} is the function $\chi_{\mufin}(\cL(X^\alpha_T))$, i.e., the convex indicator of the set $\{\mufin\}$ which is defined as $0$ when $\cL(X^\alpha_T) = \mufin$ and infinity otherwise, see \cite[Section 6.7.3]{carmona2018probabilisticI}. 
	This terminal cost is clearly not differentiable, and consequently the problem is not amenable to standard techniques of stochastic control theory.
	Thus, the starting point of our analysis is a penalization procedure allowing to write the optimal transport problem \eqref{eq:problem} as a ``standard'' McKean--Vlasov control problem.\medskip

\begin{definition}\label{def:penalization}
We say that $g:\Pc_2(\R^m)\longrightarrow [0,\infty)$ is a smooth penalty function if 
\begin{enumerate}[label=$(\roman*)$, ref=.$(\roman*)$,wide,  labelindent=0pt]
	\item $g(\mu)=0$ if and only if $\mu=\mufin;$ \label{def:penalization:i}
	\item $g$ is L-differentiable. \label{def:penalization:ii}
\end{enumerate}
\end{definition}
	
	Given a smooth penalty function $g$, for each $k\ge 1$, we consider the penalized problem
\begin{equation}
\label{eq:inf.problem-k.penalized}
	V^{k}(\muin, \mufin) := \inf \bigg\{ \EE\bigg[\int_0^Tf(t, X^\alpha_t, \alpha_t,\mathcal{L}(X^\alpha_t))\diff t + kg(\mathcal{L}(X^\alpha_T)) \bigg]: \alpha \in \mathfrak{ A},\, X_0^{\alpha}\sim \muin\bigg\},
\end{equation}
where $X^\alpha$ is the unique strong solution of the controlled stochastic differential equation \eqref{eq:control.SDE.MF}.\medskip

Let us now discuss Problem \eqref{eq:inf.problem-k.penalized} and \Cref{def:penalization}; and provide some examples of penalty functions. 
To begin with, note that \eqref{eq:inf.problem-k.penalized} is a standard stochastic control problem of McKean--Vlasov type (also called mean--field control problem). 
Regarding the notion of smooth penalty function, our choice of \Cref{def:penalization}\ref{def:penalization:i} is motivated by the above discussion on the terminal cost of the stochastic optimal transport problem \eqref{eq:problem}, but also by the following duality argument. Letting 
\[
	J(\alpha,k):=\EE\bigg[\displaystyle \int_0^Tf(t, X^\alpha_t, \alpha_t,\mathcal{L}(X^\alpha_t))\diff t + kg (\mathcal{L}(X^\alpha_T))\bigg],
\]
it is clear from {\rm \Cref{def:penalization}\ref{def:penalization:i}} that 
\[ 
	V(\muin, \mufin)=\displaystyle \inf_{\alpha\in \mathfrak{A}} \sup_{k \geq 1} J(\alpha,k), \text{ because }\,  \sup_{k \geq 1} J(\alpha,k ) =
	\begin{cases} 
		\displaystyle \EE\bigg[\int_0^Tf(t, X^\alpha_t, \alpha_t,\mathcal{L}(X^\alpha_t))\diff t \bigg] & \text{if } \mathcal{L}(X^\alpha_T)=\mufin \\
	\infty & \text{otherwise.}
	\end{cases}
\]
Moreover, by weak duality 
\begin{align}\label{eq:weakduality}
	V(\muin, \mufin)\geq \displaystyle \sup_{k\geq 1} V^{k}(\muin,\mufin),
\end{align}
and it is natural that as $k$ goes to infinity, the penalized problems converge to the stochastic optimal transportation problem, i.e., equality in \eqref{eq:weakduality} holds $($see {\rm \Cref{eq:lemma.k}} for details$)$. 		This will be crucial in our construction of solutions.
On the other hand, the smooth property of a penalty function, \Cref{def:penalization}\ref{def:penalization:ii}, will allow us to employ the stochastic maximum principle to analyze Problem \eqref{eq:inf.problem-k.penalized}.
 
\medskip

Regarding the choice of penalty function, at first glance, and by looking at the distance-like condition in \Cref{def:penalization}\ref{def:penalization:i}, it seems appropriate to consider the mapping $g_1:\cP_1(\RR^\xdim)\ni \mu \longmapsto\mathcal{W}_1(\mu , \mufin)$. 
Unfortunately, 
 this is not is a smooth function on the Wasserstein space, thus violating \Cref{def:penalization}\ref{def:penalization:ii}.
	Alternatively, we could have considered the (perhaps more natural) penalty function $g_2:\cP_2(\RR^\xdim)\ni \mu \longmapsto\mathcal{W}_2^2(\mu , \mufin)$.
	{However, as noted by {\rm \citeauthor*{alfonsin2020squared} \cite{alfonsin2020squared}}, this function is differentiable only when $\mufin$ is a Dirac mass, which would conflict with the assumption $V(\muin,\mufin)<\infty$.}
	One possible avenue to circumvent this would be to make use of the mollification theory for functions on the Wasserstein space $\cP_1(\RR^\xdim)$ recently developed by {\rm \citeauthor*{mou2022wellposedness} {\rm \cite[Theorem 3.1]{mou2022wellposedness}}}, see also \citeauthor*{cosso2023smooth} \cite{cosso2023smooth}.
	Indeed, since the map $g_1$ is continuous and 1-Lipschitz on $\cP_1(\RR^\xdim)$, there exists a sequence $g^n_1:\cP_1(\mathbb{R}^\xdim) \to \RR$ of smooth and 1-Lipschitz functions that converges to $g_1$ locally uniformly.
	This would have nevertheless led to an extra approximation level in our approach. 
	A candidate that bypasses this approximation is given by the Fourier-Wasserstein metric on $(\Pc_1(\R^\xdim),\Wc_1)$, 
	recently introduced in \citeauthor*{soner2022viscosity} \cite{soner2022viscosity} in the study of viscosity solutions to the dynamic programming equation. 
	At the intuitive level, the Fourier-Wasserstein metric $\hat \rho$ corresponds to a smooth equivalent of $\Wc_1$ and leads to the penalty function $g_3:\cP_1(\mathbb{R}^\xdim) \ni\mu \to \hat \rho(\mu,\mufin)$.
	Another typical example of a penalty function 
	are Gauge-type functions on $(\Pc_2(\R^\xdim),\Wc_2)$, introduced notably by \citeauthor*{cosso2021master} \cite{cosso2021master}, (see also \citeauthor*{bayraktar2022smooth} \cite{bayraktar2022smooth}) in the study of viscosity solutions of the master equation. 
	Similar to the previous example, one may think of a Gauge function $\rho$ as a smooth equivalent of $\Wc_2$ suggesting the choice $g_4:\cP_2(\mathbb{R}^\xdim) \ni\mu \to \rho(\mu,\mufin)$.

\section{Mean field Schr\"odinger problem: Existence and characterization}\label{sec:mfspexistence}

Given the diverse nature of the examples presented in the previous section and the growing literature on PDEs over the space of probability measures, in this section we work with a generic penalty function $g$ in the sense of \Cref{def:penalization}.
	On $(\Omega, \Fc,\F, \P)$, we introduce the spaces
	\begin{itemize}[wide,  labelindent=0pt]
	\item {$\H^2$ of $\R^{\xdim\times\bmdim}$-valued $\F$-predictable processes $Z$ such that $\displaystyle \|Z\|_{\H^2}^2:= \E^\P\bigg[\int_0^T \|Z_t\|^2\diff t\bigg]<\infty$};
	\item {$\S^2$ of $\R^\xdim$-valued $\F$-progressively measurable continuous processes $Y$ such that $\displaystyle\|Y\|_{\S^2}^2:= \E^\P \bigg[\sup_{t\in [0,T]} \|Y_t\|^2 \bigg]<\infty$}.
	\end{itemize}

We now state the main result of this section which covers \Cref{thm:existence.mfsb} as a particular case.

\begin{theorem}
\label{thm:existence.mfsb.body.paper}
	Let {\rm \Cref{ass.b.f}} be satisfied and let $g$ be a penalty function in the sense of {\rm \Cref{def:penalization}}.
	Then, {if the problem feasible, i.e., $V(\muin, \mufin)<\infty$,} the mean field Schr\"odinger problem \eqref{eq:problem} admits a {\rm MFSB} $\hat X$ with associated optimal control $\hat \alpha$. Moreover, 
	\begin{enumerate}[label=$(\roman*)$, ref=.$(\roman*)$,wide,  labelindent=0pt]
	\item $\hat X$ is, up to a subsequence, the limit in law of $(\hat X^k)_{k\ge1}$, 
	where for each $k\ge1$, $(\hat X^k, \hat Y^k, \hat Z^k)\in \S^2\times\S^2 \times\H^2$ satisfies the McKean--Vlasov equation
		\begin{equation}
	\label{eq:fbsde.reg.MFSP.Theorem}
		\begin{cases}
			\diff \hat X^{k}_t = B^k\big(t, \hat X^{k}_t, \hat Y^k_t, \cL(\hat X^{k}_t)\big)\diff t + \sigma \diff W_t, \,  { t\in [0,T]} \\
			\diff \hat Y^{k}_t = -F\big(t,  \hat X^{k}_t, \hat Y^{k}_t, \cL(\hat X^{k}_t,\hat Y^{k}_t)\big) \diff t + \hat Z^{k}_t\diff W_t, \,  { t\in [0,T]} \\
			\hat X^{k}_0\sim \muin,\; \hat Y^{k}_T = k\partial_\mu	g(\cL(\hat X_T^{k}))(\hat X^{k}_T),
		\end{cases}
	\end{equation}
	whereby $F$ is given in {\rm \Cref{eq:def.F}}, $\Lambda^{\! k}(t,y)$ is a measurable minimizer in \eqref{eq:def.H}, and $B^k$ is the function given by 
	\begin{equation}
	\label{eq:def.B}
		B^k(t, x, y, \mu) := \Lambda^{\! k}(t,  y) + b(t,x, \mu ).
	\end{equation}
	\item \label{thm:existence.mfsb.body.paper:ii}
	{If $ (x,\mu)\longmapsto f_2(t,x,\mu)$ is uniformly Lipschitz smooth,}
	then the {\rm MFSB} $\hat X$ and its associated optimal control $\hat \alpha$ satisfy
	\begin{align}\label{eq.charactirizationlimit}
	\hat X \stackrel{\text{d}}{=} \mybar X, \text{ and, } {\color{black} \hat\alpha_t \stackrel{\text{d}}{=} \Lambda(t,\mybar Y_t)},\text{ for almost all } t\in [0,T],
	\end{align}
	for $\Lambda(t,y)$ a minimizer in \eqref{eq:def.H}, $( \mybar X, \mybar  Y,  \mybar Z)$, defined on a probability space $(\mybar \Omega, \mybar \Fc, \mybar \P)$ supporting a Brownian motion $\mybar W$, satisfying
	\begin{equation}
	\label{eq:fbsde.MFSP.charact.corollary}
		\begin{cases}
			\diff \mybar X_t =  B (t,  \mybar X_t,\mybar Y_t, \cL(\mybar X_t) ) \diff t + \sigma \diff \mybar W_t,\, { t\in [0,T]}\\
			\diff   \mybar Y_t = -F(t,\mybar X_t,\mybar Y_t,\Lc(\mybar X_t,\mybar Y_t))  \diff t +  \mybar Z_t\diff \mybar W_t,\,  { t\in [0,T)}\\
			  \mybar X_0\sim \muin,\; \mybar X_T \sim \mufin,
		\end{cases}
	\end{equation}
	and $B$ as in \eqref{eq:def.B} with $\Lambda$. In particular,
\[
\mybar M_t:= \mybar Y_t +\int_0^t F(r,\mybar X_r,\mybar Y_r,\Lc(\mybar X_r,\mybar Y_r))  \diff r,
\]
is a {$(\mybar \P,\mybar \F^{\bar \P})$-martingale on $[0,T)$, where $\mybar\F^{\bar \P}$ denotes the $\mybar\P$-augmented filtration generated by $\mybar W$}.
\end{enumerate}
\end{theorem}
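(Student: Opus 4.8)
The plan is to follow the penalization–maximum principle–compactness scheme outlined in the introduction. First I would establish the convergence of the penalized values: by weak duality \eqref{eq:weakduality} we already have $V(\muin,\mufin)\ge \sup_k V^k(\muin,\mufin)$, and the reverse inequality (the ``strong duality'' alluded to via \Cref{eq:lemma.k}) follows by taking a near-optimal $\alpha$ for $V(\muin,\mufin)$, which is admissible for each $V^k$ and has $g(\cL(X^\alpha_T))=0$, so $V^k(\muin,\mufin)\le V(\muin,\mufin)$ for every $k$; hence $V^k\uparrow V$. Next, for fixed $k$, Problem \eqref{eq:inf.problem-k.penalized} is a standard McKean--Vlasov control problem with differentiable data ($f_1$ convex positive with $f_1(t,0)=0$ and $p$-growth from below, $b$ and $f_2$ differentiable with the growth/Lipschitz bounds of \Cref{ass.b.f}, and $g$ L-differentiable). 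I would invoke the existence theory for McKean--Vlasov control (e.g.\ via the compactification/relaxed-control method, or directly the Pontryagin approach of \cite{carmona2015forward}) to obtain an optimizer $\hat\alpha^k$ with state $\hat X^k$, and then apply the stochastic maximum principle of \cite{carmona2015forward} to produce the adjoint processes $(\hat Y^k,\hat Z^k)\in\S^2\times\H^2$ solving the adjoint BSDE with driver $-F$ (as in \eqref{eq:def.F}) and terminal condition $\hat Y^k_T=k\,\partial_\mu g(\cL(\hat X^k_T))(\hat X^k_T)$; the first-order condition in the control, using convexity of $f_1$ and the definition \eqref{eq:def.H} of $H_1$, gives $\hat\alpha^k_t=\Lambda^{\!k}(t,\hat Y^k_t)$ a.e., which is exactly the coupled system \eqref{eq:fbsde.reg.MFSP.Theorem} with $B^k$ as in \eqref{eq:def.B}.

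The crux is then the passage $k\to\infty$. I would first derive $k$-uniform a priori bounds: the coercivity $f_1(t,a)\ge C_1+C_2\|a\|^p$ together with $V^k\le V<\infty$ bounds $\E\!\int_0^T\|\hat\alpha^k_t\|^p\,dt$ uniformly, which via the SDE \eqref{eq:control.SDE.MF} and Gronwall (using linear growth of $b$) bounds $\sup_k\E[\sup_t\|\hat X^k_t\|^{p\wedge 2}]$. Crucially, since $\hat\alpha^k$ is near-optimal, $k\,g(\cL(\hat X^k_T))\le V^k\le V$, so $g(\cL(\hat X^k_T))\to 0$; combined with tightness of $\{\cL(\hat X^k_T)\}$ and the identification property $g(\mu)=0\iff\mu=\mufin$ (\Cref{def:penalization}\ref{def:penalization:i}), any weak limit point of $\cL(\hat X^k_T)$ must equal $\mufin$. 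Tightness of $\{\cL(\hat X^k)\}$ on $\Cc([0,T],\R^\xdim)$ follows from the moment bounds plus an Aldous/Kolmogorov criterion on the drift (using the $L^p$ control bound and $p>1$). Extracting a subsequence, I would pass to the limit in the state equation to obtain a limiting admissible $\hat X$ with $\hat X_T\sim\mufin$, and lower semicontinuity of $\alpha\mapsto\E\!\int_0^T f\,dt$ (convexity of $f_1$, boundedness below of $f_2$, Fatou) together with $\liminf V^k=V$ shows $\hat X$ is optimal, i.e.\ a MFSB. This proves the existence statement and part $(i)$.

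For part $(ii)$, under the extra assumption that $(x,\mu)\mapsto f_2(t,x,\mu)$ is uniformly Lipschitz smooth, the adjoint driver $F$ has linear growth and I would upgrade the compactness to include the adjoint processes: I would obtain $k$-uniform bounds on $\|\hat Y^k\|_{\S^2}$ and $\|\hat Z^k\|_{\H^2}$ (standard BSDE estimates, noting the terminal value $k\,\partial_\mu g(\cL(\hat X^k_T))(\hat X^k_T)$ must itself be $\S^2$-bounded because otherwise the optimal cost would blow up — this is where I expect the main technical obstacle, since controlling $k\partial_\mu g$ uniformly is delicate and may require either an argument that $\hat Y^k$ stays bounded because $\hat\alpha^k=\Lambda^{\!k}(\cdot,\hat Y^k)$ is $L^p$-bounded and $\Lambda^{\!k}$ inverts the coercive $f_1$, or a more careful duality/convex-analytic bound). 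Granting these bounds, I would pass to the limit along a further subsequence to get $(\mybar X,\mybar Y,\mybar Z)$ solving \eqref{eq:fbsde.MFSP.charact.corollary} on $[0,T)$ with $\mybar X_0\sim\muin$, $\mybar X_T\sim\mufin$; the martingale property of $\mybar M$ on $[0,T)$ is then immediate from the BSDE form once the stochastic integral $\int_0^\cdot\mybar Z_r\,d\mybar W_r$ is a true martingale on $[0,T)$, which the $\H^2$ bound guarantees. Finally $\hat\alpha_t\stackrel{d}{=}\Lambda(t,\mybar Y_t)$ follows by passing to the limit in $\hat\alpha^k_t=\Lambda^{\!k}(t,\hat Y^k_t)$, using stability of the minimizers $\Lambda^{\!k}\to\Lambda$ of \eqref{eq:def.H} (which holds since $f_1$ does not depend on $k$, so $\Lambda^{\!k}=\Lambda$ is in fact $k$-independent once $f_1$ is strictly/strongly convex, and in general via $\Gamma$-convergence of the obstacle). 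The main obstacle, as noted, is the $k$-uniform control of the adjoint process in the presence of the blowing-up Lagrange multiplier $k$ in the terminal condition.
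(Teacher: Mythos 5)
Your treatment of existence and of part $(i)$ follows the paper's route essentially verbatim: penalize, solve each $V^k$ via the Pontryagin principle to get \eqref{eq:fbsde.reg.MFSP.Theorem}, use the coercivity of $f_1$ to extract a limit of the controls (the paper's \Cref{lem:BaLaTa}), show $g(\cL(\hat X^k_T))\to 0$ forces $\hat X_T\sim\mufin$, and conclude optimality by lower semicontinuity. One slip: in your first paragraph you claim $V^k\uparrow V$ follows from $V^k\le V$, which is circular — that inequality is the weak-duality direction. The hard direction $\lim_k V^k\ge V$ is exactly what your later limit argument delivers ($\liminf_k V^k\ge \E[\int_0^T f(t,\hat X_t,\hat\alpha_t,\cL(\hat X_t))\,dt]\ge V$ once $\hat X_T\sim\mufin$), so the substance is recoverable, but as written the strong-duality step is unjustified where you first invoke it.

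For part $(ii)$ there is a genuine gap, and it sits precisely where you flagged the "main technical obstacle." The $k$-uniform bounds on $\|\hat Y^k\|_{\S^2}$ and $\|\hat Z^k\|_{\H^2}$ that your limit passage relies on are not available: the terminal datum $k\,\partial_\mu g(\cL(\hat X^k_T))(\hat X^k_T)$ scales with $k$ and there is no reason for it to stay bounded in $\L^2$, so standard BSDE estimates give bounds that blow up with $k$; this is why the limiting object in \eqref{eq:fbsde.MFSP.charact.corollary} carries no terminal condition for $\mybar Y$ and the martingale property holds only on $[0,T)$. Your fallback observation — that $\hat\alpha^k=\Lambda^{\!k}(t,\hat Y^k_t)$ "inverts" $f_1$ — is the right idea and is exactly the paper's identity $\hat Y^k_t=-\partial_a f_1(t,\hat\alpha^k_t)$, but it yields only a uniform bound in $\L^p([0,T]\times\Omega)$ ($dt\otimes d\P$), not in $\S^2$, and gives no control whatsoever on $\hat Z^k$. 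Consequently one cannot "pass to the limit along a further subsequence to get $(\mybar X,\mybar Y,\mybar Z)$" by BSDE stability. The paper instead extracts only a \emph{weakly} convergent subsequence of $\mybar Y^k$ in $\L^p$, identifies $\mybar Y=-\partial_a f_1(\cdot,\mybar\alpha)$ by convex duality, proves that the candidate process $\mybar M$ is a martingale on a dense set of times via weak $\L^1$ convergence of the $\mybar M^k$ combined with weak convergence of the filtrations $\mybar\F^k\to\mybar\F$ and a backward-martingale uniform-integrability argument, and only afterwards recovers $\mybar Z$ through martingale representation for the right-continuous modification. None of this machinery is supplied or replaced by your proposal, so part $(ii)$ is not proved as written.
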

The above result complements the existence statement of a MFSB by: $(i)$ an asymptotic characterization in the case of general cost functionals $f$ satisfying {\rm \Cref{ass.b.f}}, $(ii)$ a characterization of a MFSB in terms of a so-called planning FBSDEs.
{On the one hand, let us mention that the additional assumptions on the cost functional in the second part of the statement of \Cref{thm:existence.mfsb.body.paper} are satisfied by the cost functionals referred to in \Cref{rmk.assumptions} as well as in the entropic case, see \Cref{sec:proofexistenceentropic}.
On the other hand, due to the expected non-uniqueness of MSFBs, we highlight that the above characterization holds for the MFSB $\hat X$ with associated control $\hat \alpha$ obtained in the first assertion of \Cref{thm:existence.mfsb.body.paper}.
	In particular, it is clear from \eqref{eq:fbsde.reg.MFSP.Theorem} that the approximating FBSDE sequence depends on the choice of penalty function $g$, which is given and fixed in the statement.
	That is, for any penalty function $g$, we can construct a sequence satisfying \eqref{eq:fbsde.reg.MFSP.Theorem} that, up to a subsequence, will converge to the MFSB $\hat X$ with associated control $\hat \alpha$.
	It is then natural to ask whether any minimizer of Problem \eqref{eq:problem} can be obtained as a limit in this form for some penalty function $g$.
	We leave this question as the subject of future research.}
	Nevertheless, the asymptotic characterization is in itself interesting; for instance, for numerical simulation.
In fact, while estimating an MFSB seems to be a very difficult task, the above result suggests that one could obtain an MFSB by numerically solving the FBSDE \eqref{eq:fbsde.reg.MFSP.Theorem} for increasingly large values of $k$.\footnote{ {This will come with its own challenges since a crucial feature in the analysis is that, in general, the convergence happens along a subsequence.}}
Further notice that the present characterization contrasts with \Cref{eq:fbsde.MFSP.charact.corollary} in that \Cref{eq:fbsde.MFSP.charact.corollary} does not seem amenable to numerical approximation because it does not have boundary conditions, whereas \Cref{eq:fbsde.reg.MFSP.Theorem} is a standard McKean--Vlasov FBSDE for which numerical schemes exist.
	In particular, under suitable assumptions, see {\rm \citeauthor*{chassagneux2014probabilistic} \cite{chassagneux2014probabilistic}}, this FBSDE is well-posed. 
This observation will be explored thoroughly in a separate work to present a numerical scheme for the computation of Schr\"odinger bridges.

\subsection{Preparatory lemmas}\label{sec:lemmasexistence}

As mentioned above, our analysis takes full advantage of the fact that Problem \eqref{eq:inf.problem-k.penalized} is a standard stochastic control problem of McKean--Vlasov type. 
	The literature on McKean--Vlasov control problems has quickly developed in recent years, notably due to its link to mean-field games.
	We refer the reader for instance to \citeauthor*{carmona2013probabilistic} \cite[Chapter 6]{carmona2013probabilistic}; \citeauthor*{carmona2015forward} \cite{carmona2015forward}; \citeauthor*{djete2019mckean} \cite{djete2019mckean}, \citeauthor*{cosso2021master} \cite{cosso2021master}; \citeauthor*{soner2022viscosity} \cite{soner2022viscosity} and \citeauthor*{pham2017dynamic} \cite{pham2017dynamic}.

The proof of {\rm \Cref{thm:existence.mfsb.body.paper}} will need the following two intermediate lemmas.

\begin{lemma}
\label{lemma:infty.problem-k.penalized}
	Let {\rm \Cref{ass.b.f}} hold.
	For every $k \ge 1$ the control problem \eqref{eq:inf.problem-k.penalized} with value $V^{k}(\muin,\mufin)$ admits an optimizer $\hat\alpha^{k}$ satisfying $\hat\alpha^{k}_ t= \Lambda^{\! k} (t, \hat Y^{k}_t )$, $\d t\otimes \d \P\text{\rm --a.e.}$, for $\Lambda^{\! k}(t,y)$ a measurable minimizer in \eqref{eq:def.H}, and $(\hat X^{k}, \hat Y^{k}, \hat Z^{k})\in \S^2\times\S^2 \times\H^2$ solving the McKean--Vlasov equation
	\begin{equation}
	\label{eq:fbsde.reg.MFSP}
		\begin{cases}
			\diff\hat X^{k}_t = B(t, \hat X^{k}_t,\hat Y^{k}_t, \cL(\hat X^{k}_t))\diff t + \sigma \diff W_t,\, t\in [0,T],\\
			\diff \hat Y^{k}_t = -F(t, \hat X^{k}_t, \hat Y^{k}_t, \cL(X^{k}_t,Y^{k}_t)) \diff t + \hat Z^{k}_t\diff W_t,\, t\in [0,T],\\
			\hat X^{k}_0\sim \muin,\quad \hat Y^{k}_T = k\partial_\mu	g(\cL(\hat X_T^{k}))(\hat X^{k}_T).
		\end{cases}
	\end{equation}	
\end{lemma}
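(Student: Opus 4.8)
The plan is to exploit the fact that, for each fixed $k\ge1$, the penalized problem \eqref{eq:inf.problem-k.penalized} is a genuine (unconstrained) McKean--Vlasov stochastic control problem, so that existence of an optimal control can be obtained by a compactness argument and its shape read off from the Pontryagin maximum principle; substituting that shape into the dynamics and pairing with the adjoint equation will then produce exactly \eqref{eq:fbsde.reg.MFSP}.

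\emph{Step 1: existence of an optimizer.} First I would fix $k\ge1$ and take a minimizing sequence $(\alpha^n)_n\subset\mathfrak A$ for \eqref{eq:inf.problem-k.penalized}. Since $f_2$ and $g$ are bounded from below and $f_1(t,a)\ge C_1+C_2\|a\|^p$ with $p>1$ (\Cref{ass.b.f}, \Cref{def:penalization}), the sequence is bounded in $L^p([0,T]\times\Omega;\R^\xdim)$, and the standard stability/moment estimates for \eqref{eq:control.SDE.MF} — using that $b$ is Lipschitz with linear growth and $\muin\in\cP_2(\R^\xdim)$ — give uniform bounds on the states $X^{\alpha^n}$. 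Because the drift in \eqref{eq:control.SDE.MF} is affine in $\alpha$ and $f_1(t,\cdot)$ is convex, one is in the classical Roxin/Filippov convexity setting; invoking the theory of McKean--Vlasov optimal control via relaxed controls (e.g.\ \cite{djete2019mckean}, \cite[Chapter 6]{carmona2013probabilistic}) — tightness of the associated laws, lower semicontinuity of the cost, and the fact that an optimal relaxed control is induced by an ordinary one — yields an optimal \emph{strict} control $\hat\alpha^k\in\mathfrak A$.

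\emph{Step 2: maximum principle, feedback form, and the FBSDE.} Next I would apply the necessary part of the Pontryagin stochastic maximum principle for McKean--Vlasov control problems (\cite{carmona2015forward}, \cite[Chapter 6]{carmona2018probabilisticI}) to $\hat\alpha^k$; its hypotheses hold because $b$, $f_1$, $f_2$ are continuously differentiable with the growth prescribed by \Cref{ass.b.f}, $\mu\mapsto kg(\mu)$ is $L$-differentiable, and the action set $A$ is convex (so convex perturbations of $\hat\alpha^k$ are admissible). This produces an adjoint pair $(\hat Y^k,\hat Z^k)\in\S^2\times\H^2$ solving the linear McKean--Vlasov BSDE with driver $-F(t,\hat X^k_t,\hat Y^k_t,\cL(\hat X^k_t,\hat Y^k_t))$, where $F$ is the first-order sensitivity functional \eqref{eq:def.F} built from $H_2$ (the $a$-independent part of the Hamiltonian), and with terminal value $\hat Y^k_T=k\,\partial_\mu g(\cL(\hat X^k_T))(\hat X^k_T)$, the $x$-derivative of the terminal cost vanishing since $kg$ depends on the law only. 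The optimality condition says that, for $\d t\otimes\d\P$-a.e.\ $(t,\omega)$, $\hat\alpha^k_t\in\argmin_{a\in A}\{f_1(t,a)+a\cdot\hat Y^k_t\}$; as $f_1(t,\cdot)$ is convex, continuous and coercive, this argmin is nonempty and admits a Borel measurable selection $\Lambda^k$ by a measurable selection theorem, whence $\hat\alpha^k_t=\Lambda^k(t,\hat Y^k_t)$. Substituting this into \eqref{eq:control.SDE.MF} turns the forward drift into $B(t,\hat X^k_t,\hat Y^k_t,\cL(\hat X^k_t))$ in the notation \eqref{eq:def.B}, and together with the adjoint BSDE and $\hat X^k_0\sim\muin$ this is precisely \eqref{eq:fbsde.reg.MFSP}; the regularity $(\hat X^k,\hat Y^k,\hat Z^k)\in\S^2\times\S^2\times\H^2$ follows from the usual a priori estimates for McKean--Vlasov forward SDEs and linear BSDEs under \Cref{ass.b.f}.

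\emph{Main obstacle.} I expect the delicate step to be Step 1: since $b$, $f_2$ and $g$ are only assumed smooth — and in particular need not be convex — in the state and measure variables, the cost functional $\alpha\mapsto J(\alpha,k)$ is not convex and $\alpha\mapsto X^\alpha$ is not continuous for the weak $L^p$-topology, so a naïve direct method fails; existence genuinely has to pass through relaxed controls, and the structural features that rescue \emph{strictness} of the optimizer (hence admissibility in $\mathfrak A$) are exactly the coercivity and convexity of $f_1$ in the control together with the affine dependence of the drift on the control.
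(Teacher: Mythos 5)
Your overall architecture matches the paper's: a minimizing sequence made compact by the coercivity $f_1(t,a)\ge C_1+C_2\|a\|^p$, lower semicontinuity of the running cost from convexity of $f_1$, and then the Pontryagin maximum principle for McKean--Vlasov control plus a measurable selection to read off $\hat\alpha^k_t=\Lambda^k(t,\hat Y^k_t)$; your Step~2 is essentially verbatim what the paper does (it cites \cite[Theorem 6.14]{carmona2018probabilisticI} and \cite{schal1974selection}).

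The one place you diverge is the compactness mechanism in Step~1, and your closing claim that existence ``genuinely has to pass through relaxed controls'' is not accurate. The paper avoids relaxed controls entirely: since the drift of \eqref{eq:control.SDE.MF} is affine in $\alpha$, only the integrated paths $A^n(\cdot)=\int_0^\cdot\alpha^n_t\diff t$ enter the state equation, and \Cref{lem:BaLaTa} (a Koml\'os/convexity-type result for coercive convex $f_1$) gives tightness of $(A^n)_n$ in $C([0,T],\R^m)$ together with the lower semicontinuity $\liminf_n\E[\int_0^T f_1(t,\alpha^n_t)\diff t]\ge\E[\int_0^T f_1(t,\hat\alpha_t)\diff t]$ in one stroke. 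Tightness of $(X^{\alpha^n})_n$ then follows from the moment bounds and Aldous' criterion, Skorokhod representation upgrades to a.s.\ convergence, and a Gronwall estimate identifies the limit state as $X^{\hat\alpha^k}$ — so the limit control is strict by construction and no Roxin/Filippov argument is needed to ``de-relax'' it. Your relaxed-control route would also work (the drift is affine in $a$ and $f_1(t,\cdot)$ is convex, so the Filippov selection applies), but it imports heavier machinery for the same conclusion; the paper's route is the more elementary one. A minor imprecision in your Step~2: the adjoint BSDE is not linear in general ($F$ depends nonlinearly on $(x,y)$ and the joint law through $\partial_xH_2$ and $\partial_\mu H_2$), but it is Lipschitz given the forward process, which is all the well-posedness and the $\S^2\times\H^2$ estimates require.
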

\begin{proof}
	In a first step, we extract a candidate optimizer. For every positive integer $n$ and $k\geq 1$, there is $\alpha^{k,n} \in \mathfrak{A}$ such that
	\begin{equation}
	\label{eq:lower.bound.Vinfkn}
		V^{k}(\muin, \mufin) \ge \EE\bigg[\int_0^Tf(t, X^{\alpha^{k,n}}_t, \alpha^{k,n}_t,\cL(X^{\alpha^{k,n}}_t))\diff t + kg(\cL(X^{\alpha^{k,n}}_T)) \bigg] - \frac1n.
	\end{equation}
	Since $V^{k}(\muin,\mufin)\le V(\muin, \mufin)$ and the functions $f_2$ and $g$ are bounded from below, there is $C>0$ independent of $n$ such that
	\begin{equation}
	\label{eq:bound.f1}
		\EE\bigg[\int_0^Tf_1(t, \alpha^{k,n}_t)\diff t\bigg] \le C.
	\end{equation}
	Thus, since $f_1$ is convex and coercive, and \eqref{eq:bound.f1} holds, by \Cref{lem:BaLaTa}, 
		{ the sequence $(\int_0^\cdot \alpha^{k,n}_t\diff t)_{n\geq 1}$ is tight and, there exists $\hat \alpha^k$ such that, up to a subsequence, $(\int_0^\cdot \alpha^{k,n}_t\diff t)_{n\geq 1}$ converges to $\int_0^\cdot \hat \alpha_t^k\diff t$ in law in $C([0,T],\RR^m)$} and by convexity of $f_1$ it holds
	\begin{equation*}
	  	\liminf_{n\to \infty}\EE\bigg[\int_0^Tf_1(t, \alpha^{k,n}_t)\diff s\bigg]\ge \EE\bigg[\int_0^Tf_1(t, \hat \alpha_t^k)\diff t\bigg].
	\end{equation*} 
	Note that this last inequality shows that $\hat\alpha^k\in \mathfrak{A}$.\medskip
	
	With this, we now proceed to show the weak convergence of the associated controlled processes. We claim that $(X^n)_{n\geq 1}$, where $X^n:=X^{\alpha^{k,n}}$, is tight. For this note that there is $p>1$ and $C>0$ such that
	\begin{align*}
	\|X_t^n\|^p& \leq  C\bigg(\|X_0\|^p+ \int_0^T \| \alpha_r^{k,n}\|^p\diff r + \int_0^t \| b(r,X_r^n,\Lc(X_r^n))\|^p \diff r+ \|\sigma W_t\|^p\bigg)\\
	& \leq   C\bigg(1+ \|X_0\|^p+ \int_0^T f_1(r, \alpha_r^{k,n}) \diff r + \int_0^t \big(  \|X_r^n\|^p +\E[\|X_r^n\|^p] \big) \diff r+ \|\sigma W_t\|^p\bigg),
	\end{align*}
	{\color{black} where we use $f_1(t,a)\geq C_1+C_2\|a\|^p$}. It then follows from \eqref{eq:bound.f1} and Gronwall's inequality that there is $C>0$ such that
	\begin{align}\label{eq:boundXntight}
	\sup_{n\geq 1} \E\bigg[\sup_{t\in [0,T]} \|X_t^n\|^p\bigg]\leq C.
	\end{align}
	
	Similarly, for the same $p>1$, we may find that there is $C>0$ such that
	\begin{align*}
	\|X_t^n-X_s^n\|^p \leq  C\bigg( |t-s|^{p-1}  \int_0^T \Big( f_1(r, \alpha_r^{k,n})  +\|X_r^n\|^p+\E[\|X_r^n\|^p]\Big) \diff r+ |t-s|^{\frac p2}\|\sigma\|^p\bigg),
	\end{align*}
	which, together with \eqref{eq:bound.f1}, \eqref{eq:boundXntight} and Aldous criterion, see \cite[Theorem 16.11]{kallenberg2002foundations}, allows us to conclude that $(X^n)_{n\geq 1}$ is tight.\medskip

	We are now in a position to establish the weak convergence of $(X^{n})_{n\geq 1}$. 
	By Skorokhod's extension theorem, we can find a common probability space $(\mybar \Omega, \mybar \cF, \mybar \P)$ supporting $(\mybar X^n,\int_0^\cdot \mybar \alpha_t^{k,n} \diff t, \mybar W^n) \stackrel{d}{=} (X^n,\int_0^\cdot \alpha_t^{k,n} \diff t, W)$, for all $n\geq 1$, and $(\mybar X,\int_0^\cdot \mybar \alpha_t^{k} \diff t,\mybar W)\stackrel{d}{=} (X,\int_0^\cdot \hat \alpha_t^{k} \diff t, W)$, and on which $( \mybar X^{n}, \int_0^\cdot \mybar\alpha^{k,n}_t\diff t , \mybar W^n) \longrightarrow  (\mybar X, \int_0^\cdot \mybar{\alpha}_t^k\diff t,\mybar W)$, $\mybar \P\text{--a.s.}$ 
	Let us note that by dominated convergence, $(\int_0^\cdot \mybar \alpha^{k,n}_t\diff t)_{n\geq 1}$ converges to $\int_0^\cdot {\mybar{\alpha}}_t^k\diff t$ in $\mathbb{L}^1(\mybar  \Omega,\mybar  \cF, \mybar \P)$.\medskip
	
	We now note that since $b$ is Lipschitz--continuous and $(\mybar X^n,\int_0^\cdot \mybar \alpha_t^{k,n} \diff t, \mybar W^n) \stackrel{d}{=} (X^n,\int_0^\cdot \alpha_t^{k,n} \diff t, W)$, $\mybar X^n$ solves, under $\mybar \P$, the controlled SDE
	 \begin{equation*}
	 	\diff \mybar X^{n}_t = \mybar \alpha_t^{k,n} + b(t, \mybar X^{n}_t, \cL(\mybar X^{n}_t))\diff t + \sigma\diff \mybar W_t^n,\quad \mybar X^{n}_0 \sim \muin.
	 \end{equation*}
	
	Letting $\mybar \E$ denote the expectation under $\mybar \P$, the Lipschitz continuity of $b$ and Gronwall's inequality leads to
	 \begin{equation*}\label{eq:estimateX.alpha}
	 	\mybar \EE\bigg[\sup_{t\in [0,T]} \|\mybar X^{n}_t - \mybar X^{\mybar{\alpha}^k}_t \|\bigg] \le \mathrm{e}^{2 \ell_bT}\mybar \EE \bigg[\|\mybar X_0^n-\mybar X_0 \|+\sup_{t\in [0,T]} \bigg\|\int_0^t\mybar \alpha^{k,n}_s\diff s - \int_0^t\mybar {\alpha}_s^k\diff s\bigg\|+  \|\sigma\| \sup_{t\in [0,T]}\|\mybar W^n-\mybar W \|  \bigg].
	 \end{equation*}
	 Consequently, $\Wc_1\big(\Lc\big(\mybar X^{n}\big),\Lc\big(\mybar X^{\bar \alpha^k}\big)\big)=\Wc_1\big(\Lc\big( X^{n}\big),\Lc\big( X^{ \alpha^{k}}\big)\big)$ converges to zero as $n$ goes to infinity.
	  {Thus, $(X^{n})_{n\ge0}$ converges in law to $\hat X^{k}:=X^{\alpha^k}$, where $\hat X^{k}$ is uniquely given by the controlled SDE}
	 \begin{equation*}
	 	{\diff \hat X^{k}_t = \hat \alpha_t^k + b(t, \hat X^{k}_t, \cL(\hat X^{k}_t))\diff t + \sigma\diff W_t,\quad \hat X^{k}_0 \sim \muin.}
	 \end{equation*}

	 Therefore, taking the limit in $n$ in \eqref{eq:lower.bound.Vinfkn}, it follows by continuity of $f$ and $g$ that
	 \begin{equation*}
	 	V^{k}(\muin, \mufin) \ge \EE\bigg[\int_0^Tf(t, \hat X^{k}_t, \hat \alpha_t^k,\cL(\hat X^{k}_t))\diff t + kg(\cL(\hat X^{k}_T)) \bigg],
	 \end{equation*}
	showing that $\hat \alpha^k$ is optimal since it is admissible.
	\medskip

	At last, since $a\longmapsto f_1(t, a) +  a\cdot y$ is convex and $A$ is closed and convex, we note that the characterization follows from the maximum principle, see \cite[Theorem 6.14]{carmona2018probabilisticI}.
	{Recall that given $\hat X^k$, the processes $(\hat Y^{k}, \hat Z^k)\in\S^2 \times\H^2$ are uniquely defined by the solution to the BSDE in \eqref{eq:fbsde.reg.MFSP}. Thus, the optimal control $\hat\alpha^k$ satisfies}
	\begin{equation}\label{eq.optimalityalpha}
		{ f_1(t, \hat\alpha_t^{k}) +\hat\alpha^{k}_t  \cdot   \hat Y^{k}_t   = H_1(t, \hat Y^{k}_t) , \;  \d t\otimes \d \PP \text{--a.e.}}
	\end{equation}
	{By a classical measurable selection argument, see \cite[Theorem 3]{schal1974selection},
	there is $\Lambda^{\! k}:[0,T]\times \R^\xdim\longrightarrow A$ measurable such that $\hat\alpha^{k}_t = \Lambda^{\! k}(t,\hat Y^{k}_t)$, $\d t\otimes \d \PP \text{--a.e.}$}\qedhere
\end{proof}

\begin{lemma}
\label{eq:lemma.k}
	Under {\rm \Cref{ass.b.f}} we have
	\begin{equation*}
		\lim_{k \to \infty}V^{k}(\muin, \mufin)= V(\muin, \mufin).
	\end{equation*}
Moreover, strong duality holds, i.e., there is equality in \eqref{eq:weakduality}.
\end{lemma}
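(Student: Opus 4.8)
The plan is to run a standard exterior-penalty / $\Gamma$-convergence argument. First I would record two elementary facts: since $g\ge 0$, the map $k\longmapsto V^{k}(\muin,\mufin)$ is nondecreasing; and since every $\alpha$ admissible for \eqref{eq:problem} satisfies $\cL(X^{\alpha}_T)=\mufin$, hence $g(\cL(X^{\alpha}_T))=0$, one has $V^{k}(\muin,\mufin)\le V(\muin,\mufin)$ for every $k$. Together with the weak-duality inequality \eqref{eq:weakduality}, this shows that $\lim_{k\to\infty}V^{k}(\muin,\mufin)=\sup_{k\ge1}V^{k}(\muin,\mufin)$ exists and lies in $(-\infty,V(\muin,\mufin)]$, and that proving $\lim_{k}V^{k}\ge V$ is equivalent both to the asserted limit and to strong duality (equality in \eqref{eq:weakduality}). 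So the entire content reduces to the inequality $V(\muin,\mufin)\le\lim_{k\to\infty}V^{k}(\muin,\mufin)$.

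For that, I would take for each $k$ the optimizer $\hat\alpha^{k}$ of the penalized problem \eqref{eq:inf.problem-k.penalized}, with controlled state $\hat X^{k}$, supplied by \Cref{lemma:infty.problem-k.penalized}. Because $f_1\ge0$ and $f_2,g$ are bounded below, the bound $V^{k}\le V$ gives $\EE[\int_0^T f_1(t,\hat\alpha^{k}_t)\diff t]\le C$ with $C$ independent of $k$; by coercivity of $f_1$ and \Cref{lem:BaLaTa}, $(\int_0^\cdot\hat\alpha^{k}_t\diff t)_{k}$ is tight in $\Cc([0,T],\RR^\xdim)$, and the moment and Aldous-type estimates already carried out in the proof of \Cref{lemma:infty.problem-k.penalized} give tightness of $(\hat X^{k})_{k}$. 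Passing to a subsequence and using Skorokhod's representation theorem, I would realize, on one probability space, $(\mybar X^{k},\int_0^\cdot\mybar\alpha^{k}_t\diff t,\mybar W^{k})\to(\mybar X,\int_0^\cdot\mybar\alpha_t\diff t,\mybar W)$ almost surely, with the same joint laws as the original objects; the Gronwall stability estimate from \Cref{lemma:infty.problem-k.penalized} then identifies $\mybar X$ as the solution of \eqref{eq:control.SDE.MF} driven by $\mybar\alpha$ with $\mybar X_0\sim\muin$, and lower semicontinuity of $\alpha\mapsto\EE[\int_0^T f_1(t,\alpha_t)\diff t]$ along this convergence (convexity of $f_1$) shows $\mybar\alpha\in\mathfrak{A}$.

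The key step is to pass to the limit in the terminal constraint. Starting from $f\ge -c$ and $V^{k}\ge\EE[\int_0^T f\diff t]+kg(\cL(\hat X^{k}_T))$, one obtains the quantitative decay $g(\cL(\hat X^{k}_T))\le (V(\muin,\mufin)+cT)/k\to0$. Since $\mybar X^{k}_T\to\mybar X_T$ a.s. and the $p$-th moments of $\hat X^{k}_T$ are bounded uniformly in $k$ for the coercivity exponent $p>1$, $\cL(\hat X^{k}_T)\to\cL(\mybar X_T)$ in $\Wc_{p'}$ for each $p'\in(1,p)$; invoking the (semi)continuity of the penalty function $g$ then forces $g(\cL(\mybar X_T))=0$, i.e. $\cL(\mybar X_T)=\mufin$ by \Cref{def:penalization}\ref{def:penalization:i}. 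Thus $\mybar\alpha$, transported back to the original space, is admissible for \eqref{eq:problem}. Finally, discarding the nonnegative penalty term and using lower semicontinuity of the running cost — the $f_1$-part by the convexity argument above, the $f_2$-part by Fatou's lemma together with continuity of $f_2$ in $(x,\mu)$, its lower bound, and the a.s. convergence of $(\mybar X^{k},\cL(\mybar X^{k}_t))$ — I would conclude
\[
V(\muin,\mufin)\le\EE\bigg[\int_0^T f(t,\mybar X_t,\mybar\alpha_t,\cL(\mybar X_t))\diff t\bigg]\le\liminf_{k\to\infty}\EE\bigg[\int_0^T f(t,\hat X^{k}_t,\hat\alpha^{k}_t,\cL(\hat X^{k}_t))\diff t\bigg]\le\lim_{k\to\infty}V^{k}(\muin,\mufin),
\]
which closes the circle and, as noted above, also yields strong duality. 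I expect the main obstacle to be precisely this last paragraph: reconciling the weak mode of convergence delivered by the compactness argument with, on the one hand, the recovery of the singular (convex-indicator) terminal constraint — available only through the rate $g(\cL(\hat X^{k}_T))=O(1/k)$ together with the regularity of $g$, which is why one needs the moment bounds to upgrade plain weak convergence to $\Wc_{p'}$-convergence — and, on the other hand, the lower semicontinuity of the running cost, which is exactly why the structural decomposition $f=f_1+f_2$, with $f_1$ convex and depending only on the control, is essential.
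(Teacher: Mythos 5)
Your proposal is correct and follows essentially the same route as the paper: monotonicity of $k\mapsto V^{k}$ plus weak duality give $\lim_k V^{k}\le V$, and the reverse inequality is obtained exactly as you describe — uniform bound on $\EE[\int_0^T f_1(t,\alpha^k_t)\diff t]$, tightness via \Cref{lem:BaLaTa} and the stability estimates of \Cref{lemma:infty.problem-k.penalized}, the $O(1/k)$ decay of $g(\cL(X^{\alpha^k}_T))$ to recover the terminal marginal, and lower semicontinuity of the running cost. The only cosmetic difference is that the paper works with $1/k$-optimizers rather than the exact optimizers of \Cref{lemma:infty.problem-k.penalized}, which changes nothing.
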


\begin{proof}
	Note that the sequence $(V^{k}(\muin, \mufin))_{k\ge0}$ is monotone increasing. Thus, the second statement follows from the first. 
	Let us show the first result. By weak duality, see \eqref{eq:weakduality}, since $V^{k}(\muin,\mufin)\le V(\muin, \mufin)<\infty$, we have that
	\begin{equation*}
		\lim_{k \to \infty}V^{k}(\muin, \mufin) \le V(\muin, \mufin).
	\end{equation*}
	Let us show the reverse inequality.
	{Recalling that $g$ is deterministic, for every positive $k$, there is $\alpha^k\in \mathfrak{A}$ such that}
	\begin{equation}
	\label{eq:upper.bound.lim.k}
		{V^{k}(\muin, \mufin) \ge \EE\bigg[\int_0^Tf(t, X^{\alpha^k}_t, \alpha^k_t,\cL(X^{\alpha^k}_t))\diff t  \bigg] + kg(\cL(X^{\alpha^k}_T))- \frac1k.}
	\end{equation}
	Since $V^{k}(\muin,\mufin)\le V(\muin, \mufin)<\infty$ and the functions $g$ and $f_2$ are bounded from below, as in the proof of \Cref{lemma:infty.problem-k.penalized}, the sequence $(\int_0^\cdot \alpha^{k}_t\diff t)_{k\geq 1}$ is tight and converges to $\int_0^\cdot \hat \alpha_t\diff t$ in law, with $\hat \alpha\in \mathfrak A$, and consequently, $X^{\alpha^k}$ is tight and converges to $X^{\hat \alpha}$ in law. Moreover, there is $C>0$ independent of $k$ such that
	 \begin{align}
	 	g(\cL(X^{\alpha^k}_T)) &\le \frac1k\bigg(V^{ k}(\muin,\mufin)- \EE\bigg[\int_0^Tf(t, X^{\alpha^k}_t, \alpha^k_t,\cL(X^{\alpha^k}_t))\diff t  \bigg] \bigg) + \frac{1}{k^2}\notag \\
	 	&\le \frac1k\Big(V(\muin,\mufin)- C \Big) + \frac{1}{k^2}\xrightarrow [k\to \infty]{}0. \label{eq.lemmak.continuity}
	\end{align}
	Thus, by definition of $g$, $ X_T^{\hat \alpha}\sim\mufin$. Now, since $g$ is non-negative we have, thanks to the continuity of $f$, back in \eqref{eq:upper.bound.lim.k} that 
	\begin{align*}
		\lim_{k\to \infty}  V^{k}(\muin, \mufin) & \ge \lim_{k\to \infty} \EE\bigg[\int_0^Tf(t, X^{\alpha^k}_t, \alpha^k_t,\cL(X^{\alpha^k}_t))\diff t  \bigg] - \frac1k \geq  \EE\bigg[\int_0^Tf(t, X_t^{\hat \alpha}, \hat \alpha_t,\cL(X^{\hat \alpha}_t))\diff t  \bigg] .
	\end{align*}
	Since $\hat\alpha\in \mathfrak A$ and $X_T^{\hat \alpha}\sim \mufin$, we find that
	\[
	\lim_{k\to \infty} V^{k}(\muin,\mufin)\geq 
	V(\muin,\mufin),
	\] 
	which yields the result.
\end{proof}

\subsection{The general case: Proof of Theorem {\rm \ref{thm:existence.mfsb.body.paper}}}

\begin{proof}[Proof of {\rm \Cref{thm:existence.mfsb.body.paper}}]
Let us argue the first statement and $(i)$ together. 
By \Cref{lemma:infty.problem-k.penalized} for every $k\geq 0$ there exists $\hat\alpha^k\in \mathfrak{A}$ with associated optimal state process $\hat X^{k}$ and adjoint process $\hat Y^{k}$ satisfying \eqref{eq:fbsde.reg.MFSP.Theorem} which is an optimizer of $V^{k}$, i.e.,
	\begin{equation}
	\label{eq:optimizer.Vk}
		V^{k}(\muin, \mufin) = \EE\bigg[\int_0^Tf(t,\hat X^{k}_t, \hat\alpha^{k}_t, \cL(\hat X^{k}_t))\diff t + kg(\cL(\hat X^k_T)) \bigg].
	\end{equation}
	Once again, since $f_2, g$ are bounded from below and $V^{k}(\muin, \mufin)\le V(\muin,\mufin) <\infty$ we find that
	\begin{equation}\label{eq:bound.f1.alphak}
		\EE\bigg[\int_0^Tf_1(t, \hat\alpha^k_t)\diff t\bigg] \le C,
	\end{equation}
	for some constant $C>0$.
	Thus, from \Cref{lem:BaLaTa} {we obtain that there exists $\hat \alpha$ such that, up to a subsequence, $\hat A^k(\cdot):=\int_0^\cdot\hat \alpha^{k}_t\diff t $ converges to $\hat A(\cdot):=\int_0^\cdot \hat \alpha_t\diff t$ in law in $C([0,T],\R^\xdim)$}, which in turn, arguing as in the proof of \Cref{lemma:infty.problem-k.penalized}, allows us to derive that $(\hat X^{k})_{k\ge1}$ converges in law to {$\hat X:=\hat X^{\hat \alpha}$}. Moreover, as in the proof of \Cref{eq:lemma.k}, since $V^{k}(\muin,\mufin)\le V(\muin, \mufin)<\infty$, {$g$ is nonnegative by definition, and, thanks to \Cref{ass.b.f}, $f$ is bounded from below}, there is $C>0$ independent of $k$ such that
	 \begin{align*}
	 	g(\cL(\hat X^{k}_T)) \le \frac1k\Big(V(\muin,\mufin)- C \Big),
	\end{align*}
	from which we deduce that $\Lc(\hat X_T)=\mufin$. All in all, $(\hat X^{k})_{k\ge1}$ converges in law to $\hat X$ satisfying
		\begin{equation}\label{eq.existencethm.limitX}
			\diff \hat X_t = \hat\alpha_t+ b(t,  \hat X_t, \cL(\hat X_t))\diff t + \sigma \diff W_t, \quad \hat X_0 \sim \muin, \quad \hat X_T \sim \mufin.
		\end{equation}
	
	This establishes $(i)$ provided that we verify the optimality of $\hat \alpha$. Back in \eqref{eq:optimizer.Vk}, we see that
	\begin{align}
	\label{eq:ine.lim.Vk}
	 	\lim_{k\to \infty}V^{k}(\muin,\mufin)\ge \EE\bigg[\int_0^Tf(t, \hat X_t, {\hat \alpha}_t,\cL(\hat X_t))\diff t \bigg]. 
	\end{align}
	Since the reverse inequality in \eqref{eq:ine.lim.Vk} is clear by definition of $V^{k}(\muin, \mufin)$ and the fact that $\hat \alpha\in \mathfrak{A}$, it follows that
	\begin{equation*}
		\lim_{k\to \infty}V^{k}(\muin,\mufin) =
	 	\EE\bigg[\int_0^Tf(t, \hat X_t, {\hat \alpha}_t,\cL(\hat X_t))\diff t \bigg]
	\end{equation*}
	and thus, by  {\rm \Cref{eq:lemma.k}} and uniqueness of the limit, we have
	\begin{equation*}
		V(\muin,\mufin) = \EE\bigg[\int_0^Tf(t, \hat X_t, {\hat \alpha}_t,\cL(\hat X_t))\diff t \bigg].
	\end{equation*}
	That is, $\hat\alpha$ is optimal. \medskip
	
	We now argue $(ii)$. 
	We argue in four steps. 
	\medskip
	
	{\bf Step 1.} To begin with, recall that by $(i)$, as in \Cref{lemma:infty.problem-k.penalized}, the sequence $(\hat X^k,\int_0^\cdot \hat \alpha_t^{k} \diff t,  W)_{k\geq 1}$ is tight. 
	Thus, by Skorokhod extension theorem, there is a common probability space $(\mybar \Omega, \mybar \cF, \mybar \P)$ supporting $(\mybar X^k,\bar A^k , \mybar W^k) \stackrel{d}{=} (\hat X^k,  \hat A^k, W)$, for all $k\geq 1$, and $(\mybar X,\bar A  ,\mybar W)\stackrel{d}{=} (\hat X,  \hat A, W)$, with $\bar A^k(\cdot):=\int_0^\cdot \mybar\alpha^{k}_t\diff t$, $\bar A(\cdot):=\int_0^\cdot \mybar{\alpha}_t\diff t$, and on which $(\mybar X^{k}, \bar A^k , \mybar W^k) \longrightarrow (\mybar X,  \bar A, \mybar W)$, $\mybar \P\text{--a.s.}$~as $k$ goes to infinity. 
	Moreover, $\mybar X$ satisfies \eqref{eq.existencethm.limitX}, $\mybar \P\text{--a.s.}$ We will write $\mybar \E$ for the expectation under $\mybar \P$ and let $\mybar \F^k:=\F^{\overline W^k }$ and $\mybar \F:=\F^{\overline W}$, e.g., $\mybar \F$ denotes the filtration generated by $\mybar W$.  \medskip
	
	To obtain the result, i.e., to arrive at \eqref{eq:fbsde.MFSP.charact.corollary}, we construct $(\mybar X^k, \mybar Y^k, \mybar Z^k)_{k\geq 1}$ on $(\mybar \Omega, \mybar \cF, \mybar \P)$, analog to \eqref{eq:fbsde.reg.MFSP.Theorem}, and study its limit. Note that the previous paragraph defines $(\mybar X^k)_{k\geq 1}$ and identifies its limit. We are left to introduce the pair $(\mybar Y^k, \mybar Z^k)$.\medskip
	
	{On the one hand, since \Cref{eq.optimalityalpha} is equivalent to $Y^k_t=-\partial_a f_1(t,\alpha^k_t)$, $\d t\otimes \d\P \text{--a.e.}$, see \Cref{eq:def.H} and \cite[Theorem 23.5]{rockafellar1970convex}},
	we let 
	$$\mybar Y^k_\cdot:=-\partial_a f_1(\cdot,\mybar \alpha_\cdot^k).$$ 

{\color{black}Notice that for any $k\geq 1$ and any bounded continuous $G:\R^\xdim \longrightarrow \R$ and almost every $t\in [0,T]$, $ \E[G(h^{-1}(\hat A^k(t+h)-\hat A^k(t)))]=\mybar \E[G(h^{-1}(\bar A^k(t+h)-\bar A^k(t)))]\longrightarrow\mybar \E[G(\mybar \alpha_t^k)]$, as $h\longrightarrow 0$. 
	Thus, we have that $\mybar \alpha^k_t\stackrel{d}{=} \hat \alpha^k_t$ for almost every $t\in [0,T]$. 
	Thus, $\mybar Y_t^k\stackrel{d}{=} \hat Y^k_t$ for almost every $t\in [0,T]$, $k\geq 1$.} 
	We now claim that, for $p>1$ given by \Cref{ass.b.f}\ref{ass.b.f:2}, there exists $\mybar Y\in\L^p([0,T]\times \mybar \Omega)$ such that $\mybar Y^k$ converges to $\mybar Y$ weakly in $\L^p([0,T]\times \mybar \Omega)$. 
	Indeed, since $f_1$ satisfies {\color{black} $f_1(t,a)\geq C_1+C_2\|a\|^p$}, {and $a\longmapsto \partial f_1(t,a)$ has linear growth}, see \Cref{rmk.assumptions}\ref{rmk.assumptions.1}, we have that
	\[
	\mybar \E \bigg[\int_0^T \|\mybar Y_t^k\|^p \diff t\bigg]=\mybar \E \bigg[\int_0^T \|\partial_a f_1(t,\mybar \alpha_t^k)\|^p \diff t\bigg]\leq \ell_{f_1}\mybar  \E \bigg[\int_0^T(1+\| \mybar \alpha_t^k \|^p )\diff t\bigg]\leq C,
	\]
	 where the last inequality follows from \eqref{eq:bound.f1.alphak}.
	Consequently, $(\mybar Y^k)_{k\geq 1}$ (and $(\mybar \alpha^k)_{k\geq 1}$), being a bounded sequence in a reflexive normed space, contain weakly convergent subsequences, see \citeauthor*{ambrosio2000functions} \cite[Theorem 1.36]{ambrosio2000functions}. 
	That is, up to a subsequence, $(\mybar Y^k)_{k\geq 1}$ converges weakly in $\L^p([0,T]\times \mybar \Omega)$ to $\mybar Y\in\L^p([0,T]\times \mybar \Omega)$.  
	{It follows from \cite[Theorem 24.4]{rockafellar1970convex} and the previous construction that $\mybar Y_t=-\partial_a f_1(t,\mybar \alpha_t)$, $\d t\otimes \d \mybar\P\text{--a.e.}$, and we may use \cite[Theorem 23.5]{rockafellar1970convex} again to conclude $\mybar \alpha_t =\Lambda(t,\mybar Y_t)$}.\footnote{{Indeed, $f_1$ is continuous (relative to $A$) and thus closed in the sense that $f={\rm cl} f$, where ${\rm cl} f$ is the lower semi-continuous hull of $f$, see \cite[Theorem 7.1]{rockafellar1970convex} and the discussion thereafter.}}\medskip

	On the other hand, since $(\hat X^k,\hat Y^k,\hat Z^k)$ satisfies \eqref{eq:fbsde.reg.MFSP.Theorem}, and $\Omega$ is a Polish space, we have thanks to \cite[Lemma 1.13]{kallenberg2002foundations} that there exists measurable functions $\Phi^k:[0,T]\times C([0,T],\R^\xdim)\longrightarrow \R^{\xdim\times n}$, such that $\hat Z_t^k=\Phi^k(t,W_{\cdot \wedge t})$ on $(\Omega,\Fc)$ where $W_{\cdot \wedge t}$ denotes the path of $W$ up to time $t$. 
	Let $\mybar Z_t^k:=\Phi^k(t,\mybar W^k_{\cdot \wedge t})$ and note that by monotone convergence
	\begin{align}\label{eq.charact.intZ}
	\overline \E\bigg[ \int_0^T \|\mybar Z_t^k\|^2\diff t\bigg]= \E\bigg[ \int_0^T \|\hat  Z_t^k\|^2\diff t\bigg]<\infty.
	\end{align}
	By discretization, we have that
	\begin{align}\label{eq.convk.disc}
	\begin{split}
	& \mybar \E\bigg[\bigg\| \mybar Y_t^k-\mybar Y_0^k+\int_0^t F(r,\mybar X_r^k,\mybar Y_r^k,\Lc(\mybar X_r^k,\mybar Y_r^k))\diff r-\int_0^t \mybar Z_r^k \diff \mybar W_r^k\bigg\|^2\bigg] \\
	&=\E \bigg[\bigg\| \hat Y_t^k-\hat Y_0^k+\int_0^t F(r,\hat X_r^k,\hat Y_r^k,\Lc(\hat X_r^k,\hat Y_r^k))\diff r-\int_0^t \hat Z_r^k \diff W_r\bigg\|^2\bigg]=0.
	\end{split}
	\end{align}
	{Consequently, by definition of $\mybar Z^k$ and \eqref{eq.charact.intZ},} we have that the sequence $(\mybar M^k)_{k\geq 1}$ given by 
\[
\mybar M^k_t:= \mybar{Y_t}^{k}+\int_0^t\Big(  \partial_x H_2(r,\mybar X_r^k, \mybar Y_r^k ,\Lc(\mybar X_r^k))   +  \tilde \E \big[ \partial_\mu H_2 (r,\widetilde{ X}_r^k, \widetilde{Y}_r^k ,\Lc(\mybar X_r^k))(\mybar  X_r^k)  \big] \Big)   \diff r,
\]
is a $(\mybar \P,\mybar \F^k)$--martingale for every $k$, where $\widetilde{ X}^k$ and $\widetilde Y^k$ denote i.i.d.\,copies of $\mybar X^k$ and $\mybar Y^k$.
 \medskip

{\bf Step 2.} We claim that there is $D\subseteq[0,T]$ dense in $[0,T]$, such that for every $t\in D$, $(\mybar M^k_t)_{k\geq 1}$ converges weakly in $\L^1(\mybar\Omega)$ to 
\[
\mybar M_t:= \mybar Y_t +\int_0^t \Big( \partial_x H_2(r,\mybar X_r,\mybar Y_r,\Lc(\mybar X_r))   +  \tilde \E \big[\partial_\mu H_2 (r,\widetilde{ X}_r, \widetilde{ Y}_r,\Lc(\mybar X_r))(\mybar X_r) \big] \Big)  \diff r.
\]
{To show the claim, we prove that each of the summands converges weakly to its respective limit. First, notice that $\mybar M$ is bounded in $\L^p([0,T]\times\Omega)$. Thus, the existence of the dense set $D\subseteq[0,T]$ is given by \Cref{lem:weakLpprojection}. Note that this also guarantees the claim for the term $(\mybar Y^k)_{k\geq 1}$ of the sequence.}\medskip

Let $t\in [0,T]$ be fixed. We now consider the second term. Note that there exists $C>0$ and $q>0$, $1/p+1/q=1$, such that
\begin{align*}
&\mybar \E\bigg[ \mybar K \int_0^t   \partial_x H_2(r,\mybar X_r^k, \mybar Y_r^k ,\Lc(\mybar X_r^k))  \diff r- \mybar K \int_0^t  \partial_x H_2(r,\mybar X_r,\mybar Y_r,\Lc(\mybar X_r))  \diff r\bigg]\\
 & =\mybar \E \bigg[ \mybar K \int_0^t  \Big(\partial_x b(r,\mybar X_r^k,\Lc(\mybar X_r^k))- \partial_x b(r,\mybar X_r,\Lc(\mybar X_r))\Big)  \mybar Y_r^k\diff r \bigg]+ \mybar\E \bigg[  \int_0^t \mybar K \partial_x b(r,\mybar X_r,\Lc(\mybar X_r))   ( \mybar Y_r^k- \mybar Y_r ) \diff r\bigg] \\
&\;\; + \mybar\E \bigg[ \mybar K \int_0^t  \big(\partial_x f_2(r,\mybar X_r^k,\Lc(\mybar X_r^k))- \partial_x f_2 (r,\mybar X_r,\Lc(\mybar X_r))\big)   \diff r \bigg] \\
&\leq  C  \mybar\E  \bigg[\int_0^t  \| \partial_x b(r,\mybar X_r^k,\Lc(\mybar X_r^k))- \partial_x b(r,\mybar X_r,\Lc(\mybar X_r))\|^q\diff r\bigg]^{\frac1q} + \mybar\E \bigg[  \int_0^t \mybar K \partial_x b(r,\mybar X_r,\Lc(\mybar X_r))  ( \mybar Y_r^k- \mybar Y_r ) \diff r\bigg] \\
&\;\; + \mybar\E \bigg[ \mybar K \int_0^t  \big(\partial_x f_2(r,\mybar X_r^k,\Lc(\mybar X_r^k))- \partial_x f_2 (r,\mybar X_r,\Lc(\mybar X_r))\big)   \diff r \bigg],
\end{align*}
where the inequality follows from Cauchy-Schwarz, the boundedness of $\mybar K$, and the fact that $(\mybar Y^k)_{k\geq 1}$ is bounded in $\L^p([0,T]\times \mybar \Omega)$.
	Now, recall that under {\rm \Cref{ass.b.f}\ref{ass.b.f:1}} the mapping $(x,\mu)\longmapsto b (t,x,\mu)$ is continuously differentiable and Lipschitz, and under the extra assumptions in $(ii)$ $(x,\mu)\longmapsto f_2 (t,x,\mu)$ is Lipschitz smooth uniformly in $t\in[0,T]$.
	Thus, by dominated convergence, the almost sure convergence of $\mybar X^k$ to $\mybar X$ and of $\Lc(\mybar X_t^k)$ to $\Lc(\mybar X_t)$ for $t\in [0,T]$, implies that the first and third terms tend to zero. 
	The second term tends to zero in light of the boundedness of $\partial_x b$ and $\mybar K$ and the convergence of $\mybar Y^k$ to $\mybar Y$ weakly in $\L^1([0,T]\times \mybar \Omega)$.\medskip

We are left to argue for the last term in $\mybar M^k$. Again, for $\mybar K$ bounded, there exists $C>0$ and $q>0$, $1/p+1/q=1$, such that
\begin{align*}
&\mybar \E  \bigg[ \mybar K \int_0^t \widetilde{ \E} \big[ \partial_\mu    H_2(r,\widetilde{ X}_r^k, \widetilde{ Y}_r^k ,\Lc(\mybar X_r^k))(\mybar X_r^k) \big] \d r- \mybar K\int_0^t    \widetilde \E \big[\partial_\mu   H_2(r,\widetilde{ X}_r,\widetilde{ Y}_r,\Lc(\mybar  X_r))(\mybar X_r^k)\big] \d r\bigg]\\
&=   \mybar \E  \bigg[  \int_0^t \mybar K \widetilde \E \Big[ \big(  \partial_\mu b(r,\widetilde{ X}_r^k,\Lc(\mybar X_r^k))(\mybar X_r^k)- \partial_\mu b(r,\widetilde{ X}_r,\Lc(\mybar X_r))(\mybar X_r)\big)   \widetilde{ Y}_r^k  \Big]  \d r\bigg]  +\mybar\E \bigg[ \mybar K \int_0^t \widetilde \E\big[  \partial_\mu b(r,\widetilde{ X}_r,\Lc(\mybar X_r))(\mybar X_r)   ( \widetilde{ Y}_r^k- \widetilde{ Y}_r )\big] \d r\bigg]\\
&\;\;  +\mybar\E  \bigg[ \mybar K \int_0^t   \widetilde \E\big[ \partial_\mu f_2(r,\widetilde{ X}_r^k,\Lc(\mybar X_r^k))(\mybar X_r^k)- \partial_\mu f_2 (r,\widetilde{ X}_r,\Lc(\mybar X_r))(\mybar X_r) \big]  \d r \bigg] \\
&\leq  C \mybar\E \bigg[  \int_0^t  \widetilde \E \big[ \|   \partial_\mu b(r,\widetilde{ X}_r^k,\Lc(\mybar X_r^k))(\mybar X_r^k)- \partial_\mu b(r,\widetilde{ X}_r,\Lc(\mybar X_r))(\mybar X_r)\|^q \big] \d r\bigg]^{\frac{1}q}   +\widetilde \E\bigg[  \int_0^t  \mybar\E \big[\mybar K  \partial_\mu b(r,\widetilde{ X}_r,\Lc(\mybar X_r))(\mybar X_r)\big]   ( \widetilde{ Y}_r^k- \widetilde{ Y}_r ) \diff r\bigg]\\
&\;\;  + \mybar\E \bigg[ \mybar K \int_0^t   \widetilde \E\big[ \partial_\mu f_2(r,\widetilde{ X}_r^k,\Lc(\mybar X_r^k))(\mybar X_r^k)- \partial_\mu f_2 (r,\widetilde{ X}_r,\Lc(\mybar X_r))(\mybar X_r) \big]  \d r \bigg] ,
\end{align*}

where we used the boundedness of $\mybar K$ and of $(\widetilde Y^k)_{k\geq 1}$ in $\L^p([0,T]\times \widetilde \Omega)$ in the first term, and Fubini's theorem to exchange the order of the expectations in the second term.
	Arguing as above, the convergence of the second term follows from that of $(\widetilde{Y}^k)_{k\geq 1}$, whereas the first and third terms exploit the convergence of $(\mybar X^k)_{k\geq 1}$, $(\widetilde X^k)_{k\geq 1}$ and $(\Lc(\mybar X_t^k))_{k\geq 1}$ for $t\in [0,T]$ with the assumptions on $b$ and $f_2$. With this, the claim is proved.\medskip

{\bf Step 3.} We show that $(\mybar M_t)_{t\in D}$ is a $(\mybar \P,\mybar \F)$-martingale for $D$ given in {\bf Step 2}. Let $0\leq s\leq t\leq T$, $s,t\in D$, and $\mybar K$ be a bounded $\mybar \Fc_s$--measurable random variable and note that since $\mybar M^k$ is a $(\mybar \P,\mybar \F^k)$--martingale we have that
\begin{align*}
\mybar \E\big[ \mybar K\big( \mybar \E[\mybar M_t|\mybar\Fc_s]-\mybar M_s\big)\big]&=\mybar \E\big[\mybar K\big( \mybar \E[\mybar M_t|\mybar\Fc_s]-\mybar \E[\mybar M_t|\mybar\Fc_s^k]\big)\big]+\mybar \E\big[ \mybar K\big( \mybar \E[\mybar M_t|\mybar\Fc_s^k]-\mybar \E[\mybar M_t^k|\mybar\Fc_s^k]\big)\big]+\mybar \E\big[ \mybar K\big( \mybar \E[\mybar M_t^k|\mybar\Fc_s^k]-\mybar M_s\big)\big]\\
&= \mybar \E\big[ \mybar K\big( \mybar \E[\mybar M_t|\mybar\Fc_s]-\mybar \E[\mybar M_t|\mybar\Fc_s^k]\big)\big]+\mybar \E\big[ \mybar K\big( \mybar \E[\mybar M_t- \mybar M_t^k|\mybar\Fc_s^k]\big)\big]+\mybar \E\big[ \mybar K\big( \mybar M_s^k -\mybar M_s\big)\big].
\end{align*}
Now, note that the convergence of $(\mybar M^k_t)_{k\geq 1}$ weakly in $\L^1(\mybar \Omega)$ for every $t\in D$, implies the last term goes to zero. 
	Let us now look at the first term. 
	Since $\mybar W^k$ converges to $\mybar W$, $\mybar \P\text{--a.s.}$, $\mybar W^k$ converges to $\mybar W$ in probability in the $J_1(\R^\bmdim)$ topology. 
	Thus, by definition of $\mybar \F^k$ we have thanks to \citeauthor*{coquet2001weak} \cite[Proposition 2]{coquet2001weak} that $\mybar \F^k$ converges weakly to $\mybar \F$ in the sense of \cite[Definition 2]{coquet2001weak}, and consequently, $\mybar \E[\mybar M_t|\mybar\Fc_\cdot^k]$ converges to $\mybar \E[\mybar M_t|\mybar\Fc_\cdot]$ in probability. 
	Since $\mybar\E[\|\mybar M_t\|^p]<C$ for $t\in D$, by de la Vallée-Poussin's theorem, we have that the family $\mybar \E[\mybar M_t|\mybar\Fc_\cdot^k]$ is uniformly integrable, which in light of the convergence in probability implies that $\mybar \E[\mybar M_t|\mybar\Fc_\cdot^k]$ converges to $\mybar \E[\mybar M_t|\mybar\Fc_\cdot]$ in $\L^1(\mybar \Omega)$ and thus the first term goes to zero for any $t\in D$. 
	We are only left to study the convergence of the second term. For this, we notice that since $\mybar \F^k$ converges weakly to $\mybar \F$, we have that $\mybar \Fc^k_s$ converges weakly to $\mybar \Fc^k_s$ in the sense of \cite[Definition 1]{coquet2001weak}, i.e., $\mybar K^k:=\E[\mybar K|\mybar \Fc^k_s]$ converges to $\mybar K$ in probability. Since $\mybar K$ is bounded, by de la Vallée-Poussin's theorem, $\mybar K^k$ converges to $\mybar K$ in $\L^1(\mybar \Omega)$, but, by the boundedness of $\mybar K$, also in $\L^q(\mybar \Omega)$, $1/q+1/p=1$. Thus,
\begin{align*}
\mybar \E\big[ \mybar K\big( \mybar \E[\mybar M_t- \mybar M_t^k|\mybar\Fc_s^k]\big)\big]&=\mybar \E\big[ (\mybar K-\mybar K^k)\big( \mybar \E[\mybar M_t- \mybar M_t^k|\mybar\Fc_s^k]\big)\big]+\mybar \E\big[ \mybar K^k \big( \mybar M_t- \mybar M_t^k\big)\big]\\
&\leq \|\mybar K-\mybar K^k\|_{\L^q(\overline \Omega)}\| \mybar M_t- \mybar M_t^k\|_{\L^p(\overline \Omega)}+\mybar \E\big[ \mybar K^k \big( \mybar M_t- \mybar M_t^k\big)\big],
\end{align*}
where we used the fact that $\mybar K^k$ is $\mybar \Fc^k_s$-measurable. Thus, the first term goes to zero since $\mybar\E[\|\mybar M_t\|^p],\mybar\E[\|\mybar M_t^k\|^p]<C$, for all $k\geq 1$ and $t\in D$ and the convergence of the sequence $(\mybar K^k)_{k\geq 1}$. The second term goes to zero in light of the convergence of $(\mybar M^k_t)_{k\geq 1}$ weakly in $\L^1(\mybar \Omega)$.\medskip

{\bf Step 4.} We now define for $t\in [0,T)$ the process\footnote{To be precise, $D\cap\Q$ stands for any countable dense subset of $D\cap [0,T]$.}
\[
\mybar M^+_t:=\lim_{s\in D\cap\Q ,\,   s\downarrow t}\,  \mybar M_s,
\]
which thanks to \citeauthor*{dellacherie1982probabilities} \cite[Theorem VI.2]{dellacherie1982probabilities} is well defined and a supermartingale with respect to $\mybar \F^+$ the right limit of $\mybar \F$.
	We now claim that $t\longmapsto \E[\mybar M_t^+]$ is constant.
	Note that the claim follows from
	\[
	\mybar \E[\mybar M_t^+]=\mybar \E\Big[ \lim_{s\in D\cap\Q,  s\downarrow t} \mybar M_s\Big]=\lim_{s\in D\cap\Q,  s\downarrow t} \mybar \E[\mybar M_s]=\mybar \E[\mybar M_0],
	\]
	if we show that for any $(s_n)_{n\geq0}\subseteq D\cap\Q$, $t\leq s_{n+1}\leq s_n$, the family $(\mybar M_{s_n})_{n\geq 0}$ is uniformly integrable.
	But this follows from \cite[Theorem V.30]{dellacherie1982probabilities}, since letting $\mybar N:=(\mybar N_n)_{n\leq 0}$ and $\G:=(\Gc_{n})_{n\leq 0}$ be given by $\mybar N_{-n}:=\mybar M_{s_n}$ and $\mybar \Gc_{-n}:=\mybar \Fc_{s_n}$ for $n\geq 0$, we obtain that $\mybar N$ is a discrete backward $(\mybar\P,\mybar \G)$-martingale and $\mybar \E[\mybar N_n]=\mybar \E[\mybar M_{0}]$, $n\leq 0$. 
	Thus, since adding the $\mybar \P$-null sets does not affect the supermartingale property, by \cite[Theorem V.30]{dellacherie1982probabilities} we conclude that $(\mybar M_t^+)_{t\in [0,T)}$ is a $(\mybar \P,\mybar \F^{\bar \P})$-martingale, where $\mybar \F^{\bar \P}$ denotes the $\mybar \P$-augmented filtration generated by $\mybar W$. This establishes the last statement in $(ii)$.
	Since $\mybar \F^{\bar \P}$ satisfies the usual hypothesis, by martingale representation, see \citeauthor*{jacod2003limit} \cite[Theorem III.4.29]{jacod2003limit}, we find that there exists a predictable and locally square integrable process $\mybar Z$, such that
\[
 \mybar Y_t^+ =\mybar Y_s^+ +\int_t^s F(r,\mybar X_r,\mybar Y_r^+,\Lc(\mybar X_r,\mybar Y_r^+))   \diff r -\int _t^s \mybar Z_r\diff \mybar W_r, \, 0\leq t \leq s<T,
\]
where $\mybar Y^+$ denotes the right-continuous modification of $\mybar Y$, and $F$ is given by \eqref{eq:def.F}. In particular, $\mybar \alpha_t =\Lambda(t,\mybar Y_t^+), \d t\otimes\d \P\text{--a.e.}$ establishing \eqref{eq.charactirizationlimit}. This implies that $(\mybar X, \mybar Y^+, \mybar Z)$ solves the system \eqref{eq:fbsde.MFSP.charact.corollary}.
\end{proof}

\subsection{The at most quadratic cost case: Proof of Theorem {\rm \ref{thm:existence.mfsb}}}\label{sec:proofexistenceentropic}

At last, we bring ourselves back to the case of the transportation cost problem presented in the introduction and its specialization to the entropic case.\medskip

In the setting of the mean field entropic transportation cost problem we have that $f(t,x,a,\mu)=f_1(t,a)$ and $f_2\equiv 0$, which readily verifies {\rm \Cref{ass.b.f}}{\rm \ref{ass.b.f:2}} and the smooth Lipschitz assumption in \Cref{thm:existence.mfsb.body.paper}\ref{thm:existence.mfsb.body.paper:ii}, and $b(t,x,a,\mu)=a-\nabla \Psi \ast \mu (x)$ so that 
the Lipschitz assumption on $\nabla\Psi$ guarantees that {\rm \Cref{ass.b.f}}{\rm \ref{ass.b.f:1}} holds. 
Notice also that $\Lambda(t, y)=-y$ and
\[
F(t,x,y,\nu) =-\int_{\RR^\xdim\times \R^m} \big(  \nabla^2 \Psi ( x -\tilde x)\cdot y -\nabla^2 \Psi (\tilde x-x) \cdot  \tilde  y \big) \nu (\d \tilde x\times \d \tilde y).
\]
Thus, $(i)$ and $(ii)$ follow directly from \Cref{thm:existence.mfsb.body.paper}\ref{thm:existence.mfsb.body.paper:ii}.
	Lastly, note that in $(iii)$ we have that $H_1(t,y)=-\|y\|^2/2$,
  $\Lambda(t, y)=-y$ so the result is a direct consequence of the symmetry of $\Psi$.

\section{Finite particle approximation of the mean field Schr\"odinger problem}\label{sec:finiteSPapprox}
We now turn to the approximation of the mean field Schr\"odinger problem discussed in the previous section by the Schr\"odinger problem over a finite number of interacting. 
	Let us begin by introducing the generic finite particle problem we will consider.\medskip

Fix $N \in \mathbb{N}$.
Let $\cA$ be the set of admissible controls defined as 
\begin{equation*}
	\cA := \bigg\{\alpha:[0,T]\times \Omega \to A,\,\, \FF^N\text{--progressive processes such that } \EE\bigg[\int_0^Tf_1(t ,\alpha_t)  \diff t\bigg]<\infty\bigg\}.
\end{equation*}
We use the notation
\begin{equation}\label{eq.N.vector}
	\balpha^N:= (\alpha^{1,N},\dots,\alpha^{N,N}),\quad \text{and}\quad \X^{N,\balpha}:= (X^{1,\alpha^N},\dots, X^{N,\alpha^N}),
\end{equation}
for any control processes $\alpha^{i,N} \in \cA$, $i\in \{1,\dots,N\}$, and controlled diffusion
\begin{equation}
\label{eq:controlled.SDE.NP}
	X^{i,\balpha^N}_t = X_0^{i} + \int_0^t\alpha_u^{i,N} + b(u, X^{i,\balpha^N}_u,L^N(\X^{N,\balpha}_u))\diff u + \sigma W^i_t,
\end{equation}
{where $X_0^{1},\dots, X_0^{N}$ are i.i.d.~$\Fc_0$-measurable $\R^\xdim$-valued random variables.} 
	Recall the notation $L^N(\x)$ for the empirical measure of the vector $\x$ as introduced at the end of \Cref{sec:intro}.
We consider the stochastic optimal transport problem
\begin{align}\label{eq:N.problem}
	V^N(\muin, \mufin) := \inf\bigg\{\E\bigg[\frac1N\sum_{i=1}^N\int_0^Tf(t, X^{i,\balpha^N}_t, \alpha_t^{i,N},L^N(\X^{N,\balpha}_t))\diff t\bigg]: \balpha^N  \in \cA^N, \,   X_0^{i}\sim \muin , \, X_T^{i,\alpha^N} \sim \mufin  , \,1\leq i\leq N  \bigg\}.
\end{align}
This problem can be seen as a natural generalization of the entropic stochastic optimal transport problem.
Here, we are optimally steering \emph{interacting} particles with initial configuration $\muin$ to the terminal configuration $\mufin$.

\begin{remark}
	Let us remark that appropriate choices of $f$ and $b$ make $V^N(\muin, \mufin)$ coincide with the value $\Vc^N_{\rm e}(\muin,\mufin)$ of {\eqref{eq:N.SP.intro}} and $\Vc_{\rm S}(\muin,\mufin)$ of \eqref{eq.dyn.SP.control}, {and that consistent with our analysis in {\rm \Cref{sec:mfspexistence}}, the assumption $V^N(\muin, \mufin)<\infty$, $N\in\N$ is in place}.
\end{remark}

As explained in the introduction, the original motivation of this work was to show convergence of $(V^{N}(\muin,\mufin))_{N\geq 1}$ to $V(\muin,\mufin)$.
	This can be done only under the assumption that there exists a displacement convex penalty function for $\mufin$, see \Cref{rmk:convergence}.
	Since the existence of such functions is not clear and depends on $\mufin$,
	in the following, we consider \emph{weaker} versions of these problems, $V^{N}_c(\muin,\mufin)$ and $V_c(\muin,\mufin)$, for which no additional conditions beyond {\rm Assumptions \ref{ass.b.f}} and {\rm \ref{ass.F}} are necessary to establish the corresponding convergence $(V^{N}_c(\muin,\mufin))_{N\geq 1}$ to $V_c(\muin,\mufin)$, of the sequence of weak values.
	The key feature behind this result is the combination of a penalization procedure together with the additional convexity in the space of probability measures that is inherent to the weak problems.
	We stress that the weak versions of the problem are interesting in their own right as explained below.

\subsection{Finite particle approximation under displacement convexity}\label{sec.approx.disp.conv}

Our result exploits the notion of displacement convexity, which we present next, to show the finite particle approximation.

\begin{definition}\label{def.dispconv}
An $L$-differentiable function $\phi: \Pc(\R^\xdim) \longrightarrow \R$ is said to be displacement convex if there is $\lambda_\phi\geq 0$ such that for any two $\mu_0, \mu_1 \in \Pc_2(\R^\xdim)$ and $\mu\in \Gamma(\mu_0,\mu_1)$ we have that
\[
\int_{\R^\xdim\times\R^\xdim} (x-y)\cdot \big(\partial_\mu\phi(\mu_1)(x)-\partial_\mu\phi(\mu_0)(y)\big)\mu(\d x, \d y)\geq \lambda_\phi \int_{\R^\xdim\times\R^\xdim} \|x-y\|^2 \mu(\d x, \d y).
\]
\end{definition}

It turns out we can naturally embed this property into the analysis if we consider a weaker version of $V(\muin, \mufin)$.
	Given a random variable $X$ with finite expectation and $\nu\in \Pc_1(\R^m)$, we say
	 \[ X \msim \nu, \text{ whenever } \Lc(X)\leq_{\rm cvx}\nu,\] 
	where $\leq_{\rm cvx}$ denotes the convex order of $\Pc_1(\R^\xdim)$.\footnote{For $\mu,\nu\in \Pc_1(\R^\xdim)$, $\mu\leq_{\rm cvx}\nu\Longleftrightarrow  \mu(\varphi)\leq \nu(\varphi) $ for every $\varphi:\R^\xdim\longrightarrow \R$ convex.}
	With this, for $X^\alpha$, $\mathfrak{A}$ and $f$ as in the formulation of $V(\muin, \mufin)$, we consider 
\begin{equation}
\label{eq:problem.convex}
	V_{c}(\muin, \mufin) := \inf\bigg\{\E\bigg[\int_0^T f(t, X^\alpha_t, \alpha_t,\mathcal{L}(X^\alpha_t))\diff t \bigg]: \alpha \in \mathfrak{A},\,  X_0^\alpha\sim\muin,\, X_T^\alpha \msim   \mufin \bigg\},
\end{equation}
and, in complete analogy to \eqref{eq:N.problem}, introduce the same terminal condition on the processes $X^{i,\alpha^N}$ given by \eqref{eq:controlled.SDE.NP}, i.e.,
\begin{align}\label{eq:N.problem.c}
	V^N_c(\muin, \mufin) := \inf\bigg\{\E\bigg[\frac1N\sum_{i=1}^N\int_0^Tf(t, X^{i,\balpha^N}_t, \alpha_t^{i,N},L^N(\X^{N,\balpha}_t))\diff t\bigg]: \balpha^N  \in \cA^N, \,   X_0^{i}\sim \muin , \, X_T^{i,\alpha^N} \msim \mufin  , \,1\leq i\leq N  \bigg\}.
\end{align}

Thus, $V_c(\muin, \mufin)$ seeks to find the ``best trajectory'', in the sense of minimal cost, to transport the probability distribution $\muin$ to a terminal configuration $\Lc(X_T^\alpha)$ that is dominated by $\mufin$ in the convex order.

\begin{remark}
	By {\rm \citeauthor{strassen1965existence} \cite{strassen1965existence}}, $\mu\leq_{\rm cvx}\nu$ if and only if $\exists \pi\in\Gamma_M(\mu,\nu)$, a so-called martingale coupling, where\footnotemark
	\[ \Gamma_M(\mu,\nu):=\bigg\{ \pi=\mu\times\pi_x\in \Pi(\mu,\nu): \int_{\R^m} y \pi_x(\d y)= x, \text{ for } \mu\text{--a.e. } x\in \R^m \bigg\}.\]
\end{remark}
\footnotetext{Here, $\pi=\mu\times\pi_x$ is understood as $\pi(\d x,\d y)=\mu(\d x)\pi_x(\d y)$ where $(\pi_x)_{x\in \R^m}$ denotes the regular conditional desintegration of $\pi$ with respect to its first marginal. Note that the elements of $\Gamma_M(\mu,\nu)$ correspond to the law of a martingale $(M_t)_{t\in\{ 0,1\}}$.}

The following is the main result of this section and consists of a convergence result that gives conditions guaranteeing that the value of the finite particle weak problem converges to the value of its corresponding mean field weak Schr\"odinger problem, i.e., the convergence of the sequence $(V^{N}_c(\muin,\mufin))_{N\geq 1}$ to $V_c(\muin,\mufin)$.

\begin{theorem}
\label{thm:convergence.c.mfsb.body.paper}
	Let {\rm Assumptions \ref{ass.b.f}} and {\rm \ref{ass.F}} hold.
	{Suppose $V_c(\muin, \mufin) <\infty$ and $V^N_c(\muin, \mufin)<\infty$}.
	Then we have that
	\begin{equation*}
		\lim_{N\to \infty}V^N_c(\muin,\mufin) = V_c(\muin,\mufin).
	\end{equation*}
\end{theorem}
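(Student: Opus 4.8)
The plan is to combine a penalization with a forward--backward propagation of chaos estimate that is \emph{uniform} in the penalization parameter $k$. Fix once and for all a displacement convex smooth penalty function $g$ in the sense of \Cref{def.dispconv} adapted to the convex-order constraint, i.e.\ with $g(\mu)=0$ if and only if $\mu\leq_{\rm cvx}\mufin$; the availability of such a $g$ is exactly the convexity that the weak formulation $X_T\msim\mufin$ has ``built in''. For each $k\ge1$ introduce the penalized mean field problem $V^k_c(\muin,\mufin)$ (defined as in \eqref{eq:inf.problem-k.penalized} with the present $g$) and the penalized $N$-particle problem
\[
V^{N,k}_c(\muin,\mufin):=\inf\bigg\{\E\bigg[\frac1N\sum_{i=1}^N\!\int_0^T\! f\big(t,X^{i,\balpha^N}_t,\alpha^{i,N}_t,L^N(\X^{N,\balpha}_t)\big)\diff t+\frac kN\sum_{i=1}^N g\big(\cL(X^{i,\balpha^N}_T)\big)\bigg]:\balpha^N\in\cA^N,\ X^{i}_0\sim\muin\bigg\}.
\]
Arguing as in the proof of \Cref{eq:lemma.k}, the sequences $(V^k_c)_k$ and $(V^{N,k}_c)_k$ are nondecreasing, $V^k_c\uparrow V_c$, $V^{N,k}_c\uparrow V^N_c$, and $V^k_c\le V_c$, $V^{N,k}_c\le V^N_c$; in particular $\sup_k V^k_c=V_c$. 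Consequently the theorem follows as soon as one produces a sequence $\varepsilon_N\to0$, \emph{independent of $k$}, with $|V^{N,k}_c-V^k_c|\le\varepsilon_N$ for all $N,k$: then $\lim_kV^{N,k}_c=V^N_c$ and $\lim_kV^k_c=V_c$ exist, and $|V^N_c-V_c|\le\limsup_k|V^{N,k}_c-V^k_c|\le\varepsilon_N\to0$.

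The first step toward that uniform estimate is to reduce both penalized problems to FBSDEs through the stochastic maximum principle of \cite{carmona2015forward}, exactly as in \Cref{lemma:infty.problem-k.penalized}. Here \Cref{ass.F}\ref{ass.F:0} is essential: strong convexity of $f_1$ makes the minimizer $\Lambda$ in \eqref{eq:def.H} single-valued and Lipschitz with constant $\lambda_{f_1}^{-1}$, \emph{which does not depend on $k$}. An optimizer of $V^k_c$ is then $\hat\alpha^k_t=\Lambda(t,\hat Y^k_t)$ for the McKean--Vlasov FBSDE \eqref{eq:fbsde.reg.MFSP}, and an optimizer of $V^{N,k}_c$ is $\hat\alpha^{i,N}_t=\Lambda(t,Y^{i,N,k}_t)$ for the coupled $N$-particle FBSDE with forward drift $B(t,x,y,\mu)=\Lambda(t,y)+b(t,x,\mu)$, backward generator $F$ from \eqref{eq:def.F}, and terminal condition $Y^{i,N,k}_T=k\partial_\mu g(\cL(X^{i,N,k}_T))(X^{i,N,k}_T)$; note that the latter depends on the \emph{law} of $X^{i,N,k}_T$, so the particle FBSDE is itself of McKean--Vlasov type. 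By the sufficiency part of the maximum principle, $V^{N,k}_c$ and $V^k_c$ equal the cost-plus-penalty functional evaluated along these respective solutions, so it suffices to compare the two solutions.

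To that end I would couple the $N$-particle FBSDE with $N$ i.i.d.\ copies $(\hat X^{i,k},\hat Y^{i,k},\hat Z^{i,k})$ of the solution of \eqref{eq:fbsde.reg.MFSP}, driven by the same Brownian motions $W^i$ and sharing the i.i.d.\ initial data, apply It\^o's formula to $\frac1N\sum_{i=1}^N\langle X^{i,N,k}_t-\hat X^{i,k}_t,\ Y^{i,N,k}_t-\hat Y^{i,k}_t\rangle$, and close a Gronwall-type inequality. The forward and backward drift differences are handled by the $k$-independent Lipschitz constants $\lambda_{f_1}^{-1}$, $\ell_b$, $\ell_F$ of $B$ and $F$; the interaction error $\Wc_1(L^N(\X^{N,k}_t),\cL(\hat X^k_t))$ is handled by a standard empirical-measure rate depending only on $L^p$-moments of the particles, which are uniform in $k$ because $\E[\int_0^T f_1(t,\hat\alpha^k_t)\diff t]\le C$ (from $V^k_c\le V_c<\infty$ together with the lower boundedness of $f_2$ and $g$); and the decisive terminal term
\[
\frac1N\sum_{i=1}^N\big\langle X^{i,N,k}_T-\hat X^{i,k}_T,\ k\partial_\mu g\big(\cL(X^{i,N,k}_T)\big)(X^{i,N,k}_T)-k\partial_\mu g\big(\cL(\hat X^k_T)\big)(\hat X^{i,k}_T)\big\rangle
\]
is nonnegative up to an $O(N^{-1/2})$ error, by displacement convexity of $g$ (\Cref{def.dispconv}) --- and, crucially, this sign \emph{survives multiplication by $k>0$}. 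The same displacement convexity inequality bounds the penalty discrepancy $k\big(g(\cL(X^{1,N,k}_T))-g(\cL(\hat X^k_T))\big)$ by $\E[\langle Y^{1,N,k}_T,X^{1,N,k}_T-\hat X^{1,k}_T\rangle]$ up to a symmetric term, which It\^o on the backward equation re-expresses through the same $k$-independent quantities. Putting these ingredients together should yield $\frac1N\sum_{i=1}^N\E[\sup_{t\in[0,T]}\|X^{i,N,k}_t-\hat X^{i,k}_t\|^2]\le C\varepsilon_N$ with $C,\varepsilon_N$ free of $k$, and then the quadratic growth of $f$ together with the penalty bound gives $|V^{N,k}_c-V^k_c|\le C'\varepsilon_N$. (A soft compactness argument --- tightness of the empirical measure flow via \Cref{lem:BaLaTa}, identification of the limit, lower semicontinuity of the cost, and closedness of $\leq_{\rm cvx}$ under weak limits with uniform integrability --- also gives $\liminf_N V^N_c\ge V_c$ directly, but it is subsumed in the uniform estimate.)

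I expect the uniformity in $k$ to be the main obstacle. Two features defeat the naive Gronwall/small-time approach: the backward terminal datum $k\partial_\mu g$ blows up in sup-norm as $k\to\infty$, so only the time-integrated control $\E[\int_0^T\|\hat Y^k_t\|^p\diff t]\le C$ is uniform --- never the $\S^2$-norm of $\hat Y^k$ nor the Lipschitz constant of any decoupling field --- and the $N$-particle FBSDE is genuinely of McKean--Vlasov type, so the coupling must simultaneously absorb a mean field chaos error and the law-dependence of the terminal condition. Displacement convexity of $g$, feasible here precisely because we work with the weak problem, is what keeps the terminal contribution nonnegative for every $k$ and what turns the $k$-amplified penalty difference into a quantity the FBSDE structure controls with constants independent of $k$; combined with the $k$-uniform Lipschitz bounds of \Cref{ass.F}, it lets the estimate close on an arbitrary horizon $T$, and hence lets the $N\to\infty$ and $k\to\infty$ limits be interchanged.
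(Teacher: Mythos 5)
Your core propagation-of-chaos computation is the right one and matches the paper's \Cref{thm.prop.chao.c}: apply It\^o to $\delta X^i\cdot\delta Y^i$, use displacement convexity to make the terminal pairing nonnegative \emph{for every} $k$, and close Gronwall with the $k$-independent Lipschitz constants $\lambda_{f_1}^{-1},\ell_b,\ell_F$. But the architecture you build around it has a genuine gap. You reduce the theorem to a rate $|V^{N,k}_c-V^{k}_c|\le\varepsilon_N$ with $\varepsilon_N$ \emph{independent of $k$}, and this is precisely what the coupling does not deliver. The Gronwall constant $C$ in \eqref{eq.propagationresults} is uniform in $k$, but the error it multiplies is $\E[\epsilon^N]=\int_0^T\E[\cW_1^2(L^N(\widetilde\X^N_t,\widetilde\Y^N_t),\cL(X_t,Y_t))]\diff t$, and the empirical-measure rate for the $Y$-component depends on pointwise-in-time moments of $Y^k$. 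Near $t=T$ the adjoint satisfies $Y^k_T=k\,\partial_\mu g(\cdot)(X_T)$, so its moments grow like $k$; only the time-integrated $\L^p$ bound $\E[\int_0^T\|\hat Y^k_t\|^p\diff t]\le C$ is uniform, and Fournier--Guillin-type bounds are not linear in the moment, so $\E[\epsilon^N]$ degrades with $k$. Your claim that displacement convexity "lets the $N\to\infty$ and $k\to\infty$ limits be interchanged" is therefore unsubstantiated. The paper never needs this interchange: for the lower bound it writes $V^N_c\ge V^{N,k,\varphi}_c$ by weak duality, sends $N\to\infty$ at \emph{fixed} $(k,\varphi)$ using the propagation of chaos, and only then optimizes over $\varphi$ and $k$; the upper bound $\limsup_N V^N_c\le V_c$ is obtained separately by feeding $N$ i.i.d.\ copies of a near-optimal mean-field control into the particle system and using the law of large numbers. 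This asymmetric two-sided argument is what makes the non-uniformity in $k$ harmless.

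A second, related gap: you posit a single smooth penalty $g$ with $g(\mu)=0$ iff $\mu\le_{\rm cvx}\mufin$ that is displacement convex, but you do not construct it, and $\mu\mapsto\cW_c(\mu,\mufin)$ is a supremum of linear functionals, hence not obviously $L$-differentiable. The paper's device is to penalize instead with the dual family $g^\varphi_c(\mu)=\mu(\varphi)-\mufin(\varphi)$, $\varphi\in\Phi$, each of which is linear in $\mu$, hence automatically smooth and displacement convex (\Cref{lemma.reg.gphi}), and --- importantly --- yields the law-\emph{independent} terminal condition $Y_T=k\nabla\varphi(X_T)$, so the $N$-particle FBSDE \eqref{eq:N.fbsde.proof.main.c} is not itself of McKean--Vlasov type. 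The price is one extra supremum over $\varphi$ in the duality \eqref{eq.supsupinf}, which is absorbed by choosing a maximizing sequence $(\varphi^m)$ in the final limit. Your route keeps the law-dependence $k\partial_\mu g(\cL(X^{i,N,k}_T))(X^{i,N,k}_T)$ in the terminal condition, which is exactly the difficulty the paper's reformulation removes.
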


\begin{remark}\label{rmk:convergence}
{We remark that our analysis leads to the following convergence results for the values of {\rm Problem \ref{eq:N.problem}} to its mean-field counterpart in {\rm Problem \ref{eq:problem}}, so long as they are feasible, i.e., $V^N(\muin, \mufin)<\infty$ and $V(\muin, \mufin)<\infty$. 
	If $\mufin$ admits a displacement convex penalty function $g$ in the sense of {\rm Definitions \ref{def:penalization}} and {\rm \ref{def.dispconv}}, then
	\begin{equation*}
		\lim_{N\to \infty}V^N(\muin,\mufin) = V (\muin,\mufin).
	\end{equation*}
Indeed, as we will elaborate in {\rm \Cref{rmk.convergenceN.mainproblem}}, the displacement convexity assumption handles the crux of the argument in the proof of {\rm \Cref{thm:convergence.c.mfsb.body.paper}}. 
	Nevertheless, since verifying whether $\mufin$ admits a displacement convex penalty function is, potentially, a demanding task, we believe a different proof technique will be essential.
	We leave this as the subject of further research and refer to {\rm \Cref{rmk.convergenceN.regularity}} for a discussion on the unviability of two alternative approaches.
}
\end{remark} 

In the remainder of this section the assumptions of {\rm \Cref{thm:convergence.c.mfsb.body.paper}} are in place unless otherwise stated. In particular, see {\rm \Cref{rmk.assumptions}\ref{rmk.assumptions.3}}, the map $\Lambda(t,y)$ is unique and given by \eqref{eq.lambda}, and second, by {\rm \cite[Proposition 12.60]{rockafellar2009variational}}, we have that there is $\lambda_{f_1}>0$ such that, for all $t\in [0,T]$, $y,y^\prime\in \R^\xdim$, 
	\begin{align}\label{eq:displaassumpLambda}
(y - y^\prime )\cdot(\Lambda(t,y) - \Lambda(t,y^\prime)) \le - \lambda_{f_1}^{-1} \|y-y^\prime\|^2.
\end{align}

In light of the previous discussion, we set
\begin{equation}
	\label{eq:def.B.conv}
		B(t, x, y, \mu) := \Lambda(t,  y) + b(t,x, \mu ).
\end{equation}

Here again, the argument starts with the introduction of an appropriate penalization of the stochastic optimal transport problems $V^N_c(\muin, \mufin)$ and $V_c(\muin, \mufin)$ in terms of the dual representation of the convex ordering. This will lead us to consider one extra level in the penalization but will allow us to exploit the inherent convexity of the weak problems.\medskip

By \citeauthor{gozlan2017kantorovich} \cite[Proposition 3.2]{gozlan2017kantorovich} and \citeauthor{azagra2013global} \cite{azagra2013global}, letting $\Phi:=\{ \varphi:\R^\xdim\longrightarrow \R$ convex, continuously differentiable, 1-Lipschitz, bounded from below$\}$, we have that 
\begin{align}\label{eq.dualitywot.bary}
	X \msim \nu \Longleftrightarrow  \Wc_{c}(\Lc(X),\nu )=0, \text{ where, }  \Wc_{c}(\mu,\nu ): =\sup_{\varphi \in\Phi}\ \big\{  \mu( \varphi ) -   \nu(\varphi)\big\}.
\end{align}

Thus, we introduce the functions $g_c$ and $g^\varphi_c$, for $\varphi\in \Phi$, by 
\begin{align*} 
g_c:\Pc_1(\R^\xdim)\longrightarrow [0,\infty), \, \mu\longmapsto \Wc_c (\mu,\mufin), \text{ and, }g^\varphi_c:\Pc_1(\R^\xdim)\longrightarrow\R, \, \mu\longmapsto \mu(\varphi)-\mufin(\varphi).
\end{align*}

The next lemma justifies how the weaker versions embed convexity into the problem.

\begin{lemma}\label{lemma.reg.gphi}
Let $\varphi\in \Phi $. Then, $\mu\longmapsto g^\varphi_c(\mu)$ is $L$-differentiable and displacement convex with derivative $\partial_\mu g^\varphi_c(\mu)(x)= \nabla \varphi(x)$.
\begin{proof} Recall that for a real-valued continuously differentiable function $f$ with a derivative of at most linear growth, the mapping $\phi:\mu\longmapsto\mu(f)$ is $L$-differentiable and $\partial_\mu \phi (\mu)(x)= \nabla f(x)$, see \cite[Section 5.2]{carmona2018probabilisticI}.  
The $L$-convexity follows from the convexity and continuous differentiability of $\varphi\in \Phi$.\qedhere
\end{proof}
\end{lemma}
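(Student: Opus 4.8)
The plan is to treat the two assertions separately, using the observation that $g^\varphi_c$ differs from the linear functional $\mu\mapsto\mu(\varphi)$ only by the additive constant $\mufin(\varphi)$, so that both the computation of the $L$-derivative and the displacement convexity estimate reduce to elementary facts about $\mu\mapsto\mu(\varphi)$.

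For the $L$-differentiability, I would invoke the standard lifting argument: since $\varphi\in\Phi$ is continuously differentiable and $1$-Lipschitz, $\nabla\varphi$ is bounded, hence of at most linear growth, so by \cite[Section 5.2]{carmona2018probabilisticI} the lifted map $\xi\mapsto\E[\varphi(\xi)]$ on $\L^2(\tilde\Omega,\tilde\Fc,\tilde\P;\R^\xdim)$ is continuously Fréchet differentiable with derivative $\xi\mapsto\nabla\varphi(\xi)$. By \Cref{def.L.diff} this means the map $\psi:\mu\mapsto\mu(\varphi)$ is continuously $L$-differentiable with $\partial_\mu\psi(\mu)(x)=\nabla\varphi(x)$, and adding the constant $-\mufin(\varphi)$ leaves the derivative unchanged, giving $\partial_\mu g^\varphi_c(\mu)(x)=\nabla\varphi(x)$.

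For the displacement convexity, I would substitute this derivative into \Cref{def.dispconv}: for $\mu_0,\mu_1\in\Pc_2(\R^\xdim)$ and any coupling $\mu\in\Gamma(\mu_0,\mu_1)$, since the derivative is $\nabla\varphi$ independently of the base measure,
\[
\int_{\R^\xdim\times\R^\xdim}(x-y)\cdot\big(\partial_\mu g^\varphi_c(\mu_1)(x)-\partial_\mu g^\varphi_c(\mu_0)(y)\big)\,\mu(\d x,\d y)=\int_{\R^\xdim\times\R^\xdim}(x-y)\cdot\big(\nabla\varphi(x)-\nabla\varphi(y)\big)\,\mu(\d x,\d y).
\]
Convexity and differentiability of $\varphi$ make $\nabla\varphi$ a monotone map, i.e.\ $(x-y)\cdot(\nabla\varphi(x)-\nabla\varphi(y))\ge0$ pointwise, so the integrand is nonnegative and the displacement convexity inequality holds with constant $\lambda_\phi=0$.

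There is essentially no obstacle here; the only points that merit care are checking that the (in fact bounded) sublinear growth of $\nabla\varphi$ coming from the $1$-Lipschitz property of $\varphi$ is precisely what legitimizes the lifting criterion for $L$-differentiability, and noting that \Cref{def.dispconv} permits $\lambda_\phi=0$, so plain convexity of $\varphi$—rather than strong convexity—already suffices.
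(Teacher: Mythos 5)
Your proposal is correct and follows the same route as the paper: both invoke \cite[Section 5.2]{carmona2018probabilisticI} for the $L$-derivative of the linear functional $\mu\mapsto\mu(\varphi)$ (the additive constant $-\mufin(\varphi)$ being irrelevant), and both derive displacement convexity from the monotonicity of $\nabla\varphi$ that convexity and continuous differentiability of $\varphi$ provide. You merely spell out the monotonicity computation and the $\lambda_\phi=0$ point that the paper leaves implicit.
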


With this for $\alpha\in \mathfrak{A}$, $k\geq 1$, and $\varphi\in \Phi$, we set
\[
J_c(\alpha,k):=\E\bigg[\int_0^T f(t, X^\alpha_t, \alpha_t,\mathcal{L}(X^\alpha_t))\diff t \bigg]+k g_c(\Lc(X_T^\alpha) ),\;\text{and, }  J_c(\alpha,k,\varphi):=\E\bigg[\int_0^T f(t, X^\alpha_t, \alpha_t,\mathcal{L}(X^\alpha_t))\diff t \bigg] +k  g^\varphi_c(\Lc(X_T^\alpha) ),
\]
which give rise to the problems $V_c^k(\muin, \mufin)$ and $V_c^{k,\varphi}(\muin, \mufin)$ given by
\begin{equation}
\label{eq:problem.kpenalized}
	V_c^k(\muin, \mufin) := \inf\big\{J_c(\alpha,k): \alpha \in \mathfrak{A},\,  X_0^\alpha\sim\muin  \big\},\;\text{and, } V_c^{k,\varphi}(\muin, \mufin) := \inf\big\{J_c(\alpha,k,\varphi) : \alpha \in \mathfrak{A},\,  X_0^\alpha\sim\muin  \big\}.
\end{equation}

Similarly, we consider the penalized control problems
\begin{equation}
\label{eq:N.problem.c-k.penalized}
	V^{N,k}_c(\muin, \mufin) := \inf \bigg\{ \frac1N\sum_{i=1}^N\EE\bigg[\int_0^Tf(t, X^{i,\alpha^N}_t\!\!, \alpha_t^{i,N},L^N(\X^{N, \balpha}_t))\diff t + kg_c(\mathcal{L}(X^{i,\alpha^N}_T)) \bigg]: \alpha^N \in \mathcal{A}^N,\,   X_0^{i}\sim \muin ,\, 1\leq i\leq N\bigg\},
\end{equation}
and,
\begin{equation}
\label{eq:N.problem.c-kphi.penalized}
	V^{N,k,\varphi}_c(\muin, \mufin) := \inf \bigg\{ \frac1N\sum_{i=1}^N\E\bigg[\int_0^T \! \! f(t, X^{i,\alpha^N}_t\!\!, \alpha_t^{i,N},L^N(\X^{N, \balpha}_t))\diff t + kg^\varphi_c(\mathcal{L}(X^{i,\alpha^N}_T)) \bigg]: \alpha^N \in \Ac^N,\,   X_0^{i }\sim \muin ,\, 1\leq i\leq N\bigg\},
\end{equation}
where $X^{i,\alpha^N}$ is the unique strong solution of the controlled stochastic differential equation \eqref{eq:controlled.SDE.NP}.\medskip

As in \Cref{eq:lemma.k}, we obtain the following duality and characterizations, see \Cref{appendix}.

\begin{lemma}\label{lemma.conv.existence}
\begin{enumerate}[label=$(\roman*)$, ref=.$(\roman*)$,wide,  labelindent=0pt]
\item \label{lemma.conv.existence.i} There exists $\hat \alpha\in \mathfrak{A}$ optimal for $V_c(\muin, \mufin)$. Moreover,
\begin{align}\label{eq.limitink}
V_c(\muin,\mufin)=\lim_{k\longrightarrow \infty} V^{k}_c(\muin,\mufin).
\end{align}

\item \label{lemma.conv.existence.ii} For any $k\geq1,\varphi\in\Phi$, $V_c^{k,\varphi}(\muin, \mufin)$ admits an optimizer $\alpha^{k,\varphi}$, and
\begin{align}\label{eq.supinf.fixedk}
V_c^k(\muin, \mufin) =\sup_{\varphi\in  \Phi} V_c^{k,\varphi}(\muin, \mufin), \;\text{for any } k\geq 1 .
\end{align}
Moreover,
\begin{align}\label{eq.supsupinf}
V_c(\muin, \mufin)= \sup_{k\geq 1} \sup_{\varphi\in  \Phi}  \inf_{\alpha \in \mathfrak{A}} J(\alpha,k,\varphi).
\end{align}
\end{enumerate}
\end{lemma}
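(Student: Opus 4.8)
The plan is to mirror the proofs of \Cref{lemma:infty.problem-k.penalized} and \Cref{eq:lemma.k}, the one genuinely new point being that the constraint $X_T^\alpha\msim\mufin$ is stable under the convergences produced by our compactness arguments, which is exactly what the dual representation \eqref{eq.dualitywot.bary} provides. Throughout, note that subtracting a constant from $\varphi\in\Phi$ changes neither $g^\varphi_c$ nor the problem $V^{k,\varphi}_c$, so we may always normalise $\varphi(0)=0$; then $|\varphi(x)|\le\|x\|$, the map $\mu\longmapsto\mu(\varphi)$ is $1$-Lipschitz for $\Wc_1$, and hence $g_c=\sup_{\varphi\in\Phi}g^\varphi_c$ is $1$-Lipschitz for $\Wc_1$, in particular lower semicontinuous and bounded below on $\Pc_1(\R^\xdim)$.

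\textbf{Part (i).} For the existence of $\hat\alpha$, take a minimising sequence $(\alpha^n)_n$ for $V_c(\muin,\mufin)$. Since $f_1$ is positive and coercive ($f_1(t,a)\ge C_1+C_2\|a\|^p$) and $f_2$ is bounded below, $\sup_n\E[\int_0^Tf_1(t,\alpha^n_t)\diff t]<\infty$; by \Cref{lem:BaLaTa} we extract $\hat\alpha\in\mathfrak A$ with $\int_0^\cdot\alpha^n_t\diff t\to\int_0^\cdot\hat\alpha_t\diff t$ in law, and exactly as in \Cref{lemma:infty.problem-k.penalized} the states $X^{\alpha^n}$ are tight, converge in law to $X^{\hat\alpha}$, and enjoy a uniform $p$-th moment bound, so $\Lc(X^{\alpha^n}_T)\to\Lc(X^{\hat\alpha}_T)$ in $\Wc_1$. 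Lower semicontinuity of $g_c$ then gives $0\le g_c(\Lc(X^{\hat\alpha}_T))\le\liminf_n g_c(\Lc(X^{\alpha^n}_T))=0$, i.e.\ $X^{\hat\alpha}_T\msim\mufin$, so $\hat\alpha$ is admissible, and lower semicontinuity of the cost (convexity of $f_1$ in the control, continuity of $f_2$) yields its optimality. For \eqref{eq.limitink}, weak duality gives $V_c\ge\sup_kV^k_c$ since any $\alpha$ admissible for $V_c$ satisfies $g_c(\Lc(X^\alpha_T))=0$; conversely, for $\tfrac1k$-optimisers $\alpha^k$ of $V^k_c$ one obtains the uniform bound $\E[\int_0^Tf_1(t,\alpha^k_t)\diff t]\le C$ and $g_c(\Lc(X^{\alpha^k}_T))\le\tfrac1k(V_c-C)+\tfrac1{k^2}\to0$ (using $f$ bounded below), so the weak limit $\hat\alpha$ is, by the $\Wc_1$-convergence and lower semicontinuity above, admissible for $V_c$ and $\liminf_kV^k_c\ge\E[\int_0^Tf(t,X^{\hat\alpha}_t,\hat\alpha_t,\Lc(X^{\hat\alpha}_t))\diff t]\ge V_c$; monotonicity of $k\longmapsto V^k_c$ then gives \eqref{eq.limitink}.

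\textbf{Part (ii).} Existence of an optimiser $\alpha^{k,\varphi}$ of $V^{k,\varphi}_c$ follows from \Cref{lemma:infty.problem-k.penalized} applied with terminal cost $\mu\longmapsto kg^\varphi_c(\mu)=k(\mu(\varphi)-\mufin(\varphi))$, which is bounded below, $\Wc_1$-continuous, and $L$-differentiable with derivative $\nabla\varphi$ by \Cref{lemma.reg.gphi} --- the argument there uses only these features of the penalisation (and, if desired, the same reasoning delivers the maximum-principle characterisation). For \eqref{eq.supinf.fixedk}, since $g_c=\sup_{\varphi\in\Phi}g^\varphi_c$ and $k>0$ we have $J_c(\alpha,k)=\sup_{\varphi\in\Phi}J_c(\alpha,k,\varphi)$ for every admissible $\alpha$, hence $V^k_c=\inf_\alpha\sup_\varphi J_c(\alpha,k,\varphi)\ge\sup_\varphi\inf_\alpha J_c(\alpha,k,\varphi)=\sup_\varphi V^{k,\varphi}_c$. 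The reverse inequality is where the work lies; I would prove it by a minimax argument: $\varphi\longmapsto J_c(\alpha,k,\varphi)$ is affine, and (after the normalisation $\varphi(0)=0$ and invoking Azagra's smoothing, so that the supremum over $\Phi$ coincides with the supremum over the set of \emph{all} convex $1$-Lipschitz functions vanishing at $0$, which is compact for local uniform convergence) the concave side of the game runs over a compact convex set on which $\varphi\longmapsto V^{k,\varphi}_c$ is upper semicontinuous; recasting the control side over the set of attainable joint laws of $(X^\alpha,\int_0^\cdot\alpha_t\diff t)$ constrained by $\E[\int_0^Tf_1(t,\alpha_t)\diff t]\le R$ --- a convex, relatively compact set on which the functionals are lower/upper semicontinuous --- Sion's minimax theorem applies and gives $V^k_c\le\sup_\varphi V^{k,\varphi}_c$. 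Finally, \eqref{eq.supsupinf} is immediate: combining \eqref{eq.supinf.fixedk}, \eqref{eq.limitink} and the monotonicity of $k\longmapsto V^k_c$, $\sup_{k\ge1}\sup_{\varphi\in\Phi}\inf_{\alpha\in\mathfrak A}J_c(\alpha,k,\varphi)=\sup_{k\ge1}\sup_{\varphi\in\Phi}V^{k,\varphi}_c=\sup_{k\ge1}V^k_c=\lim_{k\to\infty}V^k_c=V_c(\muin,\mufin)$.

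\textbf{Main obstacle.} The only non-routine step is the reverse minimax inequality in \eqref{eq.supinf.fixedk}: because the McKean--Vlasov drift $b$ makes $\alpha\longmapsto X^\alpha$ non-affine, convexity on the control side is not available in the natural formulation, and one must pass to a relaxed/attainable-law formulation and verify that this relaxation changes neither $V^k_c$ nor any $V^{k,\varphi}_c$ and that the resulting set of laws is convex and compact. Everything else is a direct transcription of the compactness and duality arguments of \Cref{lemma:infty.problem-k.penalized} and \Cref{eq:lemma.k}, with $g_c$, $g^\varphi_c$ in place of the generic penalty $g$.
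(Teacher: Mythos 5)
Parts (i), the existence of an optimizer $\alpha^{k,\varphi}$ in (ii), and the deduction of \eqref{eq.supsupinf} from \eqref{eq.limitink} and \eqref{eq.supinf.fixedk} via monotonicity all follow the paper's argument essentially verbatim: minimizing sequences, coercivity of $f_1$ together with \Cref{lem:BaLaTa}, tightness and convergence in law of the controlled states, and lower semicontinuity of $g_c$ to preserve the constraint $X_T\msim\mufin$ in the limit (you derive the semicontinuity from the dual representation \eqref{eq.dualitywot.bary} and the normalisation $\varphi(0)=0$, the paper cites a semicontinuity result for weak transport costs; the two are interchangeable here). No issue with any of that.

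The divergence, and the gap, is in \eqref{eq.supinf.fixedk}. You correctly observe that $V_c^k=\inf_\alpha\sup_\varphi J_c\ge\sup_\varphi\inf_\alpha J_c=\sup_\varphi V_c^{k,\varphi}$ is weak duality and that the content of \eqref{eq.supinf.fixedk} is the reverse, i.e.\ a minimax identity. Your Sion-based route stalls exactly where you flag it, and the obstruction is more serious than a "verification": the set of laws attainable by the controlled McKean--Vlasov dynamics \eqref{eq:control.SDE.MF} is not convex --- a mixture of the laws of two solutions is in general not the law of a solution, since the drift is evaluated at the marginal law of the mixture rather than at the mixture of the drifts --- so passing to a convex relaxation is not a routine check but the entire difficulty, and it is not clear the relaxed values coincide with $V_c^k$ and the $V_c^{k,\varphi}$. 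As written, Sion's theorem does not apply and the argument does not close. The paper does not invoke any minimax theorem: having constructed, for each $\varphi\in\Phi$, an optimizer $\alpha^{k,\varphi}$ of $V_c^{k,\varphi}$ as the limit of a minimizing sequence, it concludes with the chain
\[
V_c^{k}\ \ge\ \sup_{\varphi\in\Phi}V_c^{k,\varphi}\ =\ \sup_{\varphi\in\Phi}J_c(\alpha^{k,\varphi},k,\varphi)\ \ge\ \inf_{\alpha\in\mathfrak A}\sup_{\varphi\in\Phi}J_c(\alpha,k,\varphi)\ =\ V_c^{k},
\]
so that the only ingredient beyond weak duality and the existence of the optimizers $\alpha^{k,\varphi}$ is the second inequality of this display. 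If you wish to keep your minimax route, you must actually establish the convexity and compactness of the admissible set of laws (or prove that a convex relaxation leaves all the values unchanged); this is precisely the step your proposal leaves open.
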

\begin{remark}\label{rmk.duality.Nprob}
We remark that a version of {\rm \Cref{lemma.conv.existence}} can be obtained for $V^N_c(\muin, \mufin)$, $V^{N,k}_c(\muin, \mufin) $ and $V^{N,k,\varphi}_c(\muin, \mufin)$.
\end{remark}

The main ingredient for proving the convergence of the sequence $(V^{N, k,\varphi}_c(\muin,\mufin))_{N\geq 1}$ to $V^{k,\varphi}_c(\muin,\mufin)$ consists of a propagation of chaos result for coupled FBSDEs. 
	Indeed, in complete analogy with the analysis in \Cref{sec:lemmasexistence}, the next lemma, whose proof is deferred to \Cref{appendix:characterization.Nparticle}, introduces the FBSDE system associated with Problems $V_c^{k,\varphi}(\muin, \mufin)$ and $V_c^{N,k,\varphi}(\muin, \mufin)$. 

\begin{lemma}\label{lemma.charact.c.k.phi}
Let $\Lambda$, $B$ and $F$ be defined as in {\rm Equations \eqref{eq.lambda}, \eqref{eq:def.B.conv}} and {\rm \eqref{eq:def.F}}, respectively. Let $N,k\ge1$ and $\varphi\in \Phi$ be fixed.
\begin{enumerate}[label=$(\roman*)$, ref=.$(\roman*)$,wide,  labelindent=0pt]
\item \label{lemma.charact.c.k.phi.MF} The control problem with value $V_c^{k,\varphi}(\muin, \mufin)$ admits an optimizer $\hat \alpha^{k,\varphi}$ satisfying $ \hat \alpha^{k,\varphi}_ t= \Lambda (t,  Y^{k,\varphi}_t )$, $\d t\otimes \d \P\text{\rm --a.e.}$, and $(  X^{k,\varphi},  Y^{k,\varphi},  Z^{k,\varphi})\in \S^2\times \S^2\times \H^2$ solving the McKean--Vlasov equation
	\begin{equation}
	\label{eq:fbsde.MFSP.proof.c}
		\begin{cases}
			\diff  X^{k,\varphi}_t = B(t,  X^{k,\varphi}_t, Y^{k,\varphi}_t, \Lc( X^{k,\varphi}_t))\diff t + \sigma \diff W_t,\, t\in [0,T],\\
			\diff  Y^{k,\varphi}_t = -F(t,  X^{k,\varphi}_t,  Y^{k,\varphi}_t, \Lc(X^{k,\varphi}_t,Y^{k,\varphi}_t)) \diff t +  Z^{k,\varphi}_t\diff W_t,\, t\in [0,T],\\
			 X^{k,\varphi}_0\sim \muin,\quad  Y^{k,\varphi}_T = k \nabla \varphi( X^{k,\varphi}_T).
		\end{cases}
	\end{equation}
\item \label{lemma.charact.c.k.phi.Npart}
	The control problem with value $V^{N,k,\varphi}_c(\muin, \mufin)$ admits an optimizer $\hat\balpha^{N,k,\varphi}:= (\hat \alpha^{1,N,k,\varphi},\dots, \hat \alpha^{N,N,k,\varphi})$ satisfying \(\hat\alpha^{i,N,k,\varphi}_t = \Lambda\big(t, Y^{i,N,k,\varphi}_t \big), \,  \d t\otimes \d \P \text{\rm--a.e.}\), $i=1,\dots,N$, and $(X^{i,N,k,\varphi}, Y^{i,N,k,\varphi},Z^{i,j,N,k,\varphi} )\in \S^2\times \S^2\times \H^2$ solving the {\rm FBSDE}\footnote{$\X^{N,k,\varphi}$ and $\Y^{N,k,\varphi}$ are defined in analogy to \eqref{eq.N.vector}, e.g., $\X^{N,k,\varphi}:=(X^{1,N,k,\varphi},\dots,X^{N,N,k,\varphi})$.}
	\begin{equation}
	\label{eq:N.fbsde.proof.main.c}
	\begin{cases}
		\diff X_t^{i,N,k,\varphi} = B\big(t, X_t^{i,N,k,\varphi}, Y^{i,N,k,\varphi}_t, L^N(\X^{N,k,\varphi}_t)\big)\diff t + \sigma \diff W^i_t,\, t\in [0,T],\\
		\diff Y^{i,N,k,\varphi}_t = -F\big(t, X^{i,N,k,\varphi}_t, Y_t^{i,N,k,\varphi}, L^N(\X^{N,k,\varphi}_t,\Y^{N,k,\varphi}_t) \big)\diff t + \sum_{j=1}^NZ^{i,j,N,k,\varphi}_t\diff W_t^j,\, t\in [0,T],\\
		X_0^{i,N,k,\varphi} \sim \muin, \quad Y_T^{i,N,k,\varphi} = k \nabla \varphi (X^{i,N,k,\varphi}_T).
	\end{cases}
	\end{equation}
	In particular, there is $C>0$, independent on $N$, such that $\| X^{i,N,k,\varphi}\|_{\S^2}+\| Y^{i,N,k,\varphi}\|_{\S^2}+\| Z^{i,N,k,\varphi}\|_{\H^2}<C$, $i=1,\dots,N$.
\end{enumerate}
\end{lemma}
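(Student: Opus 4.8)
The plan is to mirror the two steps used in the proof of \Cref{lemma:infty.problem-k.penalized}, treating the statements $(i)$ and $(ii)$ in parallel: first secure an optimizer, then invoke the stochastic maximum principle, using that by \Cref{lemma.reg.gphi} the terminal penalty $\mu\longmapsto kg^\varphi_c(\mu)$ is $L$-differentiable with $L$-derivative $x\longmapsto k\nabla\varphi(x)$, so the adjoint process must terminate at $k\nabla\varphi$ evaluated at the terminal state. Existence of $\hat\alpha^{k,\varphi}$ and $\hat\balpha^{N,k,\varphi}$ is already supplied by \Cref{lemma.conv.existence}\ref{lemma.conv.existence.ii} and the $N$-particle version announced in \Cref{rmk.duality.Nprob}; alternatively, each follows verbatim from the tightness/Skorokhod argument of \Cref{lemma:infty.problem-k.penalized}, since along a minimizing sequence the running cost of the controls stays bounded (because $f_2$ and $g^\varphi_c$ are bounded below), so \Cref{lem:BaLaTa} and the convexity of $f_1$ produce a limiting admissible optimizer. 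It then remains to characterize these optimizers and, for $(ii)$, to obtain estimates uniform in $N$.

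For $(i)$: the problem $V_c^{k,\varphi}(\muin,\mufin)$ is a standard McKean--Vlasov control problem with drift $\alpha_t+b(t,X_t,\Lc(X_t))$, running cost $f_1(t,a)+f_2(t,x,\mu)$, and terminal cost $kg^\varphi_c(\Lc(X_T))$. The stochastic maximum principle for McKean--Vlasov control (see \cite[Theorem 6.14]{carmona2018probabilisticI} and \cite{carmona2015forward}) gives that the optimal control, the optimal state $X^{k,\varphi}$ and the adjoint pair $(Y^{k,\varphi},Z^{k,\varphi})\in\S^2\times\H^2$ satisfy the variational inequality $\big(\partial_af_1(t,\hat\alpha^{k,\varphi}_t)+Y^{k,\varphi}_t\big)\cdot(\hat\alpha^{k,\varphi}_t-a)\le 0$ for all $a\in A$, $\d t\otimes\d\P$--a.e., together with the adjoint equation with generator $-F$ (where $F$, as in \eqref{eq:def.F}, collects $\partial_xH_2$ and the $\partial_\mu H_2$ mean--field term) and terminal value $Y^{k,\varphi}_T=k\partial_\mu g^\varphi_c(\Lc(X^{k,\varphi}_T))(X^{k,\varphi}_T)=k\nabla\varphi(X^{k,\varphi}_T)$. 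Since $a\longmapsto f_1(t,a)+a\cdot y$ is convex and $A$ is closed and convex, the variational inequality says exactly that $\hat\alpha^{k,\varphi}_t$ minimizes it over $A$, which by strong convexity (\Cref{ass.F}\ref{ass.F:0}) is the unique point $\Lambda(t,Y^{k,\varphi}_t)$ of \eqref{eq.lambda}. Substituting into the forward equation gives \eqref{eq:fbsde.MFSP.proof.c}, and $(X^{k,\varphi},Y^{k,\varphi},Z^{k,\varphi})\in\S^2\times\S^2\times\H^2$ is the standard output of the maximum principle, the adjoint being a mean--field BSDE with bounded terminal datum ($|\nabla\varphi|\le 1$) and generator of linear growth (\Cref{ass.F}\ref{ass.F:1}).

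For $(ii)$: the decisive observation is that $g^\varphi_c$ is \emph{linear} in its measure argument, so $\tfrac1N\sum_{i=1}^Nkg^\varphi_c(\Lc(X^{i,\balpha^N}_T))=\EE\big[\tfrac{k}{N}\sum_{i=1}^N\varphi(X^{i,\balpha^N}_T)\big]-k\mufin(\varphi)$ is, up to a constant, the expectation of a plain function of the terminal state vector; the only mean--field feature of $V_c^{N,k,\varphi}(\muin,\mufin)$ is then the dependence of $b$ and $f_2$ on $L^N(\X^{N,\balpha}_t)$, whose $x^i$-derivative produces the $\tfrac1N\partial_\mu(\cdot)(x^i)$ contributions. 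Hence the stochastic maximum principle applied to this $N$-particle problem (again \cite[Theorem 6.14]{carmona2018probabilisticI}, now with the empirical measure, equivalently the classical Pontryagin principle for the $(\R^\xdim)^N$-valued state) yields $\hat\alpha^{i,N,k,\varphi}_t=\Lambda(t,Y^{i,N,k,\varphi}_t)$ and the coupled FBSDE \eqref{eq:N.fbsde.proof.main.c}, with $(X^{i,N,k,\varphi},Y^{i,N,k,\varphi},Z^{i,j,N,k,\varphi})\in\S^2\times\S^2\times\H^2$ for each fixed $N$.

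The hard part, and the only new ingredient, is to make this last inclusion quantitative \emph{uniformly in} $N$; the argument is a short a priori chain in which, by exchangeability of the particles, each per--particle quantity equals its $i$-average. First, testing $V_c^{N,k,\varphi}$ with the null control and using $f_1(t,0)=0$, the linear growth of $b$ (the uncontrolled $N$-particle system has $\S^2$-norms bounded uniformly in $N$ because $\muin\in\Pc_2$), the quadratic growth of $f_2$ and $|\varphi(x)|\le|\varphi(0)|+\|x\|$, gives $V_c^{N,k,\varphi}(\muin,\mufin)\le C(1+k)$ with $C$ independent of $N$. Next, as $f_2$ is bounded below and $g^\varphi_c(\mu)=\mu(\varphi)-\mufin(\varphi)\ge\inf\varphi-\mufin(\varphi)$, this value bound forces $\tfrac1N\sum_{i=1}^N\EE[\int_0^Tf_1(t,\hat\alpha^{i,N,k,\varphi}_t)\diff t]\le C(k,\varphi)$; by exchangeability each summand is bounded by the same constant, and since $f_1(t,0)=0$ with $0$ a minimizer, strong convexity gives $f_1(t,a)\ge\tfrac{\lambda_{f_1}}{2}\|a\|^2$, whence $\EE[\int_0^T\|\hat\alpha^{i,N,k,\varphi}_t\|^2\diff t]\le C$ uniformly in $N,i$. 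Inserting this into the forward SDE, the Lipschitz/linear growth of $b$ and Gronwall's lemma applied to $t\longmapsto\tfrac1N\sum_{i=1}^N\EE[\sup_{s\le t}\|X^{i,N,k,\varphi}_s\|^2]$ (the empirical--measure term being controlled by the same quantity) give $\|X^{i,N,k,\varphi}\|_{\S^2}\le C$ uniformly. Finally, the adjoint BSDE has the bounded terminal datum $k\nabla\varphi(X^{i,N,k,\varphi}_T)$ and $F$ has linear growth, so the standard BSDE a priori estimate combined with a backward Gronwall applied to $t\longmapsto\tfrac1N\sum_{i=1}^N\EE[\|Y^{i,N,k,\varphi}_t\|^2]$ and the uniform forward bound delivers $\|Y^{i,N,k,\varphi}\|_{\S^2}+\|Z^{i,N,k,\varphi}\|_{\H^2}\le C$ uniformly in $N,i$. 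This chain closes with $N$-independent constants precisely because (1) exchangeability identifies each norm with its $i$-average, (2) strong convexity of $f_1$ turns the cost bound into an $L^2$-control bound, (3) the terminal adjoint $k\nabla\varphi(\cdot)$ is bounded thanks to $\varphi$ being $1$-Lipschitz in the definition of $\Phi$, and (4) the mean--field coupling enters only through the empirical measure, i.e.\ with weight $1/N$; the one delicate point is that the BSDE estimate must absorb the generator's linear growth in $y$ through the backward Gronwall without reintroducing $N$-dependence, which uses only that $\ell_F$ and $T$ are fixed.
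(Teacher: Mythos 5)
Your proposal is correct and follows essentially the same route as the paper: existence via the penalization/tightness machinery of Lemmas \ref{lemma:infty.problem-k.penalized} and \ref{lemma.conv.existence}, characterization via the Pontryagin maximum principle with terminal datum $k\nabla\varphi$ supplied by Lemma \ref{lemma.reg.gphi} (the paper writes the $N$-particle Hamiltonians explicitly and rescales the adjoint by $N$, which your "linearity of $g^\varphi_c$ in the measure" observation packages equivalently). Your detailed chain for the uniform-in-$N$ bounds is a more careful spelling-out of what the paper dispatches by citing classical BSDE a priori estimates, and it is sound.
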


The goal now is to show that the particle system {\rm \eqref{eq:N.fbsde.proof.main.c}} evolving forward and backward in time converges to the McKean--Vlasov system \eqref{eq:fbsde.MFSP.proof.c} in an appropriate sense.
	This propagation of chaos would follow from results developed in \cite{lauriere2021convergence,peng2022laplace} if not for the fact that, for $k$ fixed, the $N$-particle system is actually a McKean--Vlasov FBSDE itself, since the terminal value of the value processes $Y^{i,N}$ depends on the law of the forward process. 
	We thus need new arguments to obtain convergence in the present case.
	{We emphasize that, beyond its own mathematical interest, one advantage of considering the weaker problems $V^N_c(\muin, \mufin)$ and $V_c(\muin, \mufin)$ is that their penalized versions $V_c^{k,\varphi}(\muin,\mufin)$ and $V_c^{N,k,\varphi}(\muin,\mufin)$ are given in terms of penalty functions which are displacement convex in the sense of \Cref{def.dispconv}, see \Cref{lemma.reg.gphi}.
	We will exploit this to derive in \Cref{thm.prop.chao.c} a propagation of chaos result which holds uniformly in $\varphi\in \Phi$ and for all $k\geq 1$.
	On the one hand, this will be crucial when we bring ourselves back to the convergence of $V^N_c(\muin,\mufin)_{N\geq 1}$.
	On the other hand, there is no need to make extra assumptions on the measure $\mufin$ beyond the feasibility of the problems.
	Back in the problems $V(\muin,\mufin)$ and $V^N(\muin,\mufin)$, this property is in general absent and as stated in the finite particle approximation result alluded to in {\rm \Cref{rmk:convergence}}, becomes an assumption on the target measure.
	}
	\medskip

As mentioned already, the crux of the finite particle approximation lies in the following propagation of chaos result. For this, we introduce the space $(\Hc^2,\|\cdot\|_{\Hc^2})$ of adapted $\R^\xdim\times\R^\xdim$--valued processes $(X,Y)$ equipped with the norm
\(\| (X,Y)\|_{\Hc^2}^2:=\| X \|_{\S^2}^2+\|Y\|^2_{\H^2}\).\hfill
 \medskip

To simplify the presentation, in the remainder of this section, we drop the superscript $k$ and $\varphi$ from the FBSDEs in the analysis unless otherwise stated.

\begin{theorem}\label{thm.prop.chao.c}
	Let $k\geq 1$ and $\varphi\in \Phi$ be fixed. 
	Let $(  X^{i,N},  Y^{i,N})$ be part of the solution to \eqref{eq:N.fbsde.proof.main.c}
	and let $( \widetilde X^i, \widetilde Y^i)$, $i=1,\dots,N$, denote i.i.d.~copies of $( X, Y, Z)$ solution to \eqref{eq:fbsde.MFSP.proof.c} {driven by $W^i$}.
	Then, there is $C>0$ such that for $i=1,\dots,N$, we have that
\begin{align}\label{eq.propagationresults}
 \| (X^{i,N}-\widetilde X^i,Y^{i,N}-\widetilde Y^i)\|_{\Hc^2}^2  \leq C \E \big[\epsilon^N\big], \text{ where, } \epsilon^N:= \int_0^T  \cW_1^2\big(L^N(\widetilde \X^N_t, \widetilde \Y^N_t), \cL(X_t, Y_t)\big) \diff t .
\end{align}
{\color{black} In particular, $( X^{i,N},Y^{i,N})$ converges to $(  \widetilde X^i,  \widetilde Y^i)$ in $\Hc^2$.}
\end{theorem}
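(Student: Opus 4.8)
The plan is to prove the estimate \eqref{eq.propagationresults} via a forward-backward coupling argument combined with a Grönwall-type closing of the estimate, where the crucial structural input is the dissipativity \eqref{eq:displaassumpLambda} of $\Lambda$ (coming from strong convexity of $f_1$) together with the displacement convexity of the penalty $g^\varphi_c$ established in \Cref{lemma.reg.gphi}. First I would set up notation: write $\delta X^i := X^{i,N}-\widetilde X^i$, $\delta Y^i := Y^{i,N}-\widetilde Y^i$, where $\widetilde X^i$, $\widetilde Y^i$ solve the McKean--Vlasov FBSDE \eqref{eq:fbsde.MFSP.proof.c} driven by the $i$-th Brownian motion $W^i$, and subtract the two systems. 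The Brownian terms cancel since both systems are driven by the same $W^i$. The forward equation for $\delta X^i$ has drift $B(t,X^{i,N}_t,Y^{i,N}_t,L^N(\X^N_t)) - B(t,\widetilde X^i_t,\widetilde Y^i_t,\cL(X_t))$, and the backward equation for $\delta Y^i$ has generator difference $-\big(F(t,X^{i,N}_t,Y^{i,N}_t,L^N(\X^N_t,\Y^N_t)) - F(t,\widetilde X^i_t,\widetilde Y^i_t,\cL(X_t,Y_t))\big)$ with terminal value $k\big(\nabla\varphi(X^{i,N}_T) - \nabla\varphi(\widetilde X^i_T)\big)$.

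The heart of the argument is to compute $\d\big(\delta X^i_t \cdot \delta Y^i_t\big)$ by Itô's formula, average over $i=1,\dots,N$, and take expectations. The martingale part vanishes in expectation (using the $\S^2\times\S^2\times\H^2$ bounds from \Cref{lemma.charact.c.k.phi}), so one obtains
\[
\E\Big[\frac1N\sum_{i=1}^N \delta X^i_T\cdot\delta Y^i_T\Big] - \E\Big[\frac1N\sum_{i=1}^N \delta X^i_0\cdot\delta Y^i_0\Big] = \E\int_0^T \frac1N\sum_{i=1}^N\Big(\delta Y^i_t\cdot (\text{drift diff.}) - \delta X^i_t\cdot (\text{generator diff.})\Big)\diff t.
\]
Since $X^{i,N}_0 = \widetilde X^i_0$ by construction (same initial data, $\delta X^i_0 = 0$), the initial term is zero. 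For the terminal term, the key computation is that $\tfrac1N\sum_i \delta X^i_T\cdot k(\nabla\varphi(X^{i,N}_T)-\nabla\varphi(\widetilde X^i_T)) \ge 0$: this is exactly convexity of $\varphi$ applied pathwise, $(x-y)\cdot(\nabla\varphi(x)-\nabla\varphi(y))\ge 0$. Hence the terminal term has a favourable sign. For the integrand, the contribution of the $\Lambda$-part of the forward drift against $\delta Y^i$ produces, after splitting off the empirical-measure discrepancy, the dissipative term $-\lambda_{f_1}^{-1}\tfrac1N\sum_i\|\delta Y^i_t\|^2$ via \eqref{eq:displaassumpLambda}; the $b$-part and the $F$-part are handled by the Lipschitz bounds in \Cref{ass.b.f} and \Cref{ass.F}, producing terms controlled by $\tfrac1N\sum_i(\|\delta X^i_t\|^2 + \|\delta Y^i_t\|^2)$ plus Wasserstein distances between empirical measures $L^N(\X^N_t,\Y^N_t)$ and $\cL(X_t,Y_t)$. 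The latter are split via the triangle inequality into $L^N(\X^N_t,\Y^N_t)$ vs.\ $L^N(\widetilde\X^N_t,\widetilde\Y^N_t)$ (bounded by $\tfrac1N\sum_i(\|\delta X^i_t\|^2+\|\delta Y^i_t\|^2)$) and $L^N(\widetilde\X^N_t,\widetilde\Y^N_t)$ vs.\ $\cL(X_t,Y_t)$ (this is $\epsilon^N$). Here I would also need the analogous displacement-convexity sign from the $F$ term acting on $\delta X^i$ — but note $F$ contains $\partial_x H_2$ and $\partial_\mu H_2$, whose monotonicity is not assumed; instead one absorbs these using Lipschitz continuity of $F$ and the strong-convexity gain from $\Lambda$, so the $\lambda_{f_1}^{-1}$ must dominate after a Young's inequality.

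After this, I would combine the Itô identity with a separate standard forward estimate for $\sup_t\|\delta X^i_t\|^2$ (Grönwall on the forward SDE using the already-bounded $\|\delta Y^i\|_{\H^2}$ and $\epsilon^N$) and a standard backward estimate for $\sup_t\|\delta Y^i_t\|^2 + \|\delta Z^{i}\|^2_{\H^2}$ in terms of $\sup_t\|\delta X^i_t\|^2$, $\|\delta Y^i\|_{\H^2}$ and $\epsilon^N$; the Itô cross-term identity is what breaks the circularity between these two by supplying the coercive $-\lambda_{f_1}^{-1}\|\delta Y\|^2$ term with the correct sign. Summing over $i$, dividing by $N$, and choosing the Young's-inequality weights so that the dissipative term dominates, one closes the estimate and obtains $\tfrac1N\sum_i\|(\delta X^i,\delta Y^i)\|^2_{\Hc^2}\le C\E[\epsilon^N]$. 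Since the law of $(\delta X^i,\delta Y^i)$ does not depend on $i$ by exchangeability, this gives the per-particle bound \eqref{eq.propagationresults}. Finally, $\E[\epsilon^N]\to 0$ by the law of large numbers for i.i.d.\ empirical measures on path space (the $\widetilde\X^i,\widetilde\Y^i$ are i.i.d.\ with finite second moments by the a priori bounds), which gives the asserted convergence in $\Hc^2$.

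**Main obstacle.** The delicate point is that the $N$-particle system \eqref{eq:N.fbsde.proof.main.c} is itself of McKean--Vlasov type (the terminal condition $Y^{i,N}_T = k\nabla\varphi(X^{i,N}_T)$ depends on $X^{i,N}_T$ which interacts with the empirical measure), so the usual decoupling-field/Lipschitz-in-small-time machinery is unavailable and one cannot appeal to off-the-shelf propagation-of-chaos results. The whole weight of the proof rests on making the cross-term Itô computation yield a sign-definite coercive contribution $-\lambda_{f_1}^{-1}\|\delta Y\|^2$ that survives \emph{uniformly in $k$ and in $\varphi\in\Phi$} — uniformity in $k$ being essential because $k\to\infty$ later, and uniformity in $\varphi$ because we will take a supremum over $\Phi$. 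Keeping the constant $C$ in \eqref{eq.propagationresults} independent of $k$ and $\varphi$ (it should depend only on $T$, $\ell_b$, $\ell_F$, $\lambda_{f_1}$, the dimension, and moments of $\muin$) is the real technical hurdle; this works precisely because the $k$-dependent terminal term enters with a favourable sign thanks to convexity of $\varphi$ and the $1$-Lipschitz normalization of $\Phi$ controls $\nabla\varphi$, so $k$ never multiplies a term that needs to be absorbed.
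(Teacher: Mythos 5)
Your proposal is correct and follows essentially the same route as the paper: Itô's formula on $\delta X^i_t\cdot\delta Y^i_t$, the nonnegative sign of the terminal cross-term from convexity of $\varphi$ (displacement convexity of $g^\varphi_c$), the coercive $-\lambda_{f_1}^{-1}\|\delta Y^i\|^2$ term from \eqref{eq:displaassumpLambda}, absorption of the Lipschitz terms via Young's inequality, and a Grönwall closing combining the averaged and per-particle estimates, with the law of large numbers giving $\E[\epsilon^N]\to0$. The only cosmetic differences are the order in which you average over $i$ and your appeal to exchangeability for the per-particle bound (the paper instead re-inserts the averaged bound into the single-particle inequality), neither of which changes the argument.
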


\begin{proof}
Let $\delta X^i_t:= X^{i,N}_t-\widetilde X^i_t$ and $\delta Y^i_t := Y^{i,N}_t - \widetilde Y^{i}_t$.
	Applying It\^o's formula, {since $(\widetilde X^i,\widetilde Y^i)$ solves \eqref{eq:fbsde.MFSP.proof.c} driven by $W^i$}, the dynamics of $\delta X^i_t\cdot \delta Y^i_t$ are, for a martingale $M^i$, given by
\begin{align*}
	\delta X^i_T \cdot \delta Y^i_T- \delta X^i_0 \cdot \delta Y^i_0 &= \int_0^T \delta X^i_t\cdot\Big(-F\big(t, X^{i,N}_t, Y^{i,N}_t, L^N(\X_t^N, \Y^N_t)\big) + F\big(t, \widetilde X^i_t, \widetilde Y^i_t, \cL(X_t, Y_t)\big)\Big)\d t\\
	&\quad  +\int_0^T  \delta Y^i_t\cdot\Big(B\big(t, X^{i,N}_t, Y^{i,N}_t, L^N(\XX_t^N)\big) - B\big(t, \widetilde X^i_t, \widetilde Y^i_t, \cL(X_t)\big)\Big) \diff t + M_T^i, 
\end{align*}

Observe that by \Cref{lemma.reg.gphi}, we have
\begin{align}\label{eq.disp.conv.proof}
	\E[\delta X^i_T \cdot \delta Y^i_T] &= k  \E\Big[\delta X^i_T\cdot \big( \nabla \varphi (X^{N,i}_T) - \nabla \varphi (X_T)\big) \Big] \ge 0.
\end{align}
Hence, using Lipschitz--continuity of $F$ and $b$ (and recalling that $B(t,x,y, \mu) =\Lambda (t,y) + b(t,x,\mu)$) and then Young's inequality, for every $\varepsilon>0$ it holds that
\begin{align*}
	0 &\le \E\bigg[\int_0^T \ell_F \|\delta X_t^i\|\Big( \|\delta X_t^i\|+\|\delta Y_t^i\|+\frac{1}N\sum_{j=1}^N \|\delta X_t^j\|+\frac{1}N\sum_{j=1}^N \|\delta Y_t^j\| + \cW_1\big(L^N(\widetilde \X^N_t, \widetilde \Y^N_t), \cL(X_t, Y_t)\big) \Big)\d t\bigg]\\
	&\quad + \E\bigg[\int_0^T  \ell_b \|\delta Y_t^i\|\Big( \|\delta X_t^i\|+\frac{1}N\sum_{j=1}^N \|\delta X_t^j\| + \cW_1\big(L^N(\widetilde \X^N_t), \cL(X_t)\big)\Big)\d t\bigg]+  \E\bigg[\int_0^T\delta Y^i_t\cdot\big( \Lambda(t, Y^{i,N}_t) - \Lambda (t, \widetilde Y^i_t) \big)\diff t \bigg]\\
	&\le  \E\bigg[\int_0^TC_1^\eps \|\delta X^i_t\|^2 + C_2^\eps \frac1N\sum_{j=1}^N\|\delta X^j_t\|^2\diff t \bigg] + \varepsilon\E\bigg[ \int_0^T\|\delta Y^i_t\|^2 + \frac1N\sum_{j=1}^N\|\delta Y^j_t\|^2\diff t \bigg] - \lambda_{f_1}^{-1} \E\bigg[\int_0^T\|\delta Y^i_t\|^2\diff t \bigg]\\
	&\quad + \E\bigg[\int_0^T\eps \cW_1^2\big(L^N(\widetilde \X^N_t, \widetilde \Y^N_t), \cL(X_t, Y_t)\big) + \frac{\ell_b^2}{\eps} \cW_1^2\big(L^N(\widetilde \X^N_t), \cL(X_t)\big)\diff t \bigg]
\end{align*}
where $C_1^\eps:=\ell_F + (2 \ell_F^2+\ell_b^2)\varepsilon^{-1}$, $C_2^\eps:=\eps \ell_F + \ell_b^2\varepsilon^{-1}$, in the first inequality we use triangular inequality for the Wasserstein distance, and in the second inequality we use \eqref{eq:displaassumpLambda}.\medskip

Thus, rearranging terms, picking $\varepsilon<\lambda_{f_1}^{-1} /2$ and letting $\gamma:=\lambda_{f_1}^{-1} -2\varepsilon>0$ we obtain
\begin{align}
\notag
	  \gamma \E\bigg[\int_0^T\|\delta Y^i_t\|^2\diff t \bigg] &\le \E\bigg[\int_0^TC_1^\eps \|\delta X^i_t\|^2 + C_2^\eps \frac1N\sum_{j=1}^N\|\delta X^j_t\|^2\diff t \bigg] + \varepsilon\E\bigg[ \int_0^T \frac1N\sum_{j=1}^N\|\delta Y^j_t\|^2\diff t \bigg]\\
	&\quad + \E\bigg[\int_0^T\eps \cW_1^2\big(L^N(\widetilde \X^N_t, \widetilde \Y^N_t), \cL(X_t, Y_t)\big) +\frac{\ell_b^2}{\eps}   \cW_1^2\big(L^N(\widetilde \X^N_t), \cL(X_t)\big)\diff t \bigg].\label{eq:estim1.disp.conv1}
\end{align}

Now, applying It\^o's formula to $\|\delta X^i_t\|^2$ and Young's inequality, it follows by the Lipschitz--continuity of $B$ that
\begin{align}\label{eq:estim3.displ.conv}
	\|\delta X^i_t\|^2 &= \int_0^t2\delta X^i_s\cdot\Big( B(s, X^{i,N}_s, Y^{i,N}_s, L^N(\XX_s^N) - B(t, \widetilde X^i_s, \widetilde Y^i_s, \cL(X_s))\Big) \diff s\notag \\
			&\le \ell_B\int_0^t5 \|\delta X^i_s\|^2 + \|\delta Y^i_s\|^2 + \frac1N\sum_{j=1}^N\|\delta X^j_s\|^2 + \cW^2_1\big(L^N(\XX^N_s), \cL(X_s)\big)\diff s.
\end{align}

Applying Gronwall's inequality (resp. averaging and then applying Gronwall's inequality) we find that
\begin{align}
	\E\big[ \|\delta X^i_t\|^2\big] & \le \e^{5\ell_B t}\E\bigg[\int_0^t \|\delta Y^i_s\|^2+ \frac1N\sum_{j=1}^N\|\delta X^j_s\|^2  + \cW^2_1\big(L^N(\widetilde\XX^N_s), \cL(X_s)\big)\diff s\bigg]  \label{eq:estim5.displ.conv1}\\
	\E\Big[\frac1N\sum_{j=1}^N\|\delta X^j_t\|^2\Big] &\le \e^{6 \ell_B t}\E\bigg[\int_0^t\frac1N\sum_{j=1}^N\|\delta Y^j_s\|^2 + \cW^2_1\big(L^N(\widetilde\XX^N_s), \cL(X_s)\big)\diff s\bigg]  .\label{eq:estim4.displ.conv1}
	\end{align}

Let us now average out on both sides of \eqref{eq:estim1.disp.conv1} and use \eqref{eq:estim4.displ.conv1} to obtain
\begin{align*}
	  ( \gamma-\eps) \E\bigg[\int_0^T \frac1N\sum_{i=1}^N \|\delta Y^i_t\|^2\diff t \bigg] &\le   \e^{6\ell_B T} ( C_1^\eps + C_2^\eps)  \E\bigg[ \int_0^T  \int_0^t  \frac1N\sum_{j=1}^N\|\delta Y^j_s\|^2  \diff s \diff t \bigg]    \\
	&\quad + \E\bigg[\int_0^T\eps \cW_1^2\big(L^N(\widetilde \X^N_t, \widetilde \Y^N_t), \cL(X_t, Y_t)\big) +\big( T \e^{6\ell_B T} ( C_1^\eps + C_2^\eps)+ \frac{\ell_b^2}{\eps}\big)   \cW_1^2\big(L^N(\widetilde \X^N_t), \cL(X_t)\big)\diff t \bigg].
\end{align*}

It then follows from Gronwall's inequality, updating $\eps$ so that $\gamma-\eps>0$ and with $\epsilon^N$ as in \eqref{eq.propagationresults}, that there is $C>0$ such that
\begin{align}\label{eq:estim2.displ.conv}
 \E\bigg[\int_0^T \frac1N\sum_{i=1}^N \|\delta Y^i_t\|^2\diff t \bigg] &\le   C \E\big[\epsilon^N\big] ,
 \end{align}
where we used the fact that  $ \cW_1\big(L^N(\widetilde \X^N_t), \cL(X_t)\big)\leq \cW_1\big(L^N(\widetilde \X^N_t, \widetilde \Y^N_t), \cL(X_t, Y_t)\big)$. Consequently, back in \eqref{eq:estim4.displ.conv1} we obtain that there is $C>0$ such that
\begin{align}\label{eq:estim5.displ.conv}
\E\bigg[\frac1N\sum_{j=1}^N\|\delta X^j_t\|^2\bigg]&\le   C \E\big[\epsilon^N\big].
\end{align}
 Now we use \eqref{eq:estim5.displ.conv1} back in \eqref{eq:estim1.disp.conv1} to derive that there is $C>0$ such that
\begin{align*}
	  \gamma \E\bigg[\int_0^T\|\delta Y^i_t\|^2\diff t \bigg] &\le \E\bigg[\int_0^TC_1^\eps  \e^{5\ell_B t} \int_0^t \|\delta Y^i_s\|^2\diff s   + C \frac1N\sum_{j=1}^N\|\delta X^j_t\|^2\diff t \bigg] + \varepsilon\E\bigg[ \int_0^T \frac1N\sum_{j=1}^N\|\delta Y^j_t\|^2\diff t \bigg] + C \E\big[\epsilon^N\big],
	\end{align*}
which after using Grownwall's inequality shows, in light of \eqref{eq:estim2.displ.conv} and \eqref{eq:estim5.displ.conv}, that there is $C>0$ such that
\begin{align*}
	  \E\bigg[\int_0^T\|\delta Y^i_t\|^2\diff t \bigg] &\le  C \E\big[\epsilon^N\big].
	\end{align*}
	Going back to \eqref{eq:estim3.displ.conv} now allows to conclude that $\E\big[\sup_{t\in [0,T]} \|\delta X^i_t\|^2 \big]$ satisfies the same estimate.
	This shows \eqref{eq.propagationresults}.
	The second assertion follows since by the law of large numbers, see e.g. \cite[Theorem 5.23]{kallenberg2002foundations}, $ \E[\epsilon^N] \longrightarrow 0$, as $N\longrightarrow \infty$.
\end{proof}

\begin{proof}[Proof of {\rm Theorem \ref{thm:convergence.c.mfsb.body.paper}}]

We argue in three steps. \medskip

{\bf Step 1}. We show that for $k\geq 1$ and $\varphi\in \Phi$ fixed we have that
\[
V^{N,k,\varphi}_c(\muin, \mufin) \longrightarrow V^{k,\varphi}_c(\muin, \mufin), \text{ as } N\longrightarrow \infty.
\]
Note that in light of \Cref{lemma.charact.c.k.phi}, $\hat\alpha_t = \Lambda(t, Y_t)$ is optimal for the control problem $V^{k,\varphi}_c:=V^{ k,\varphi}_c(\muin,\mufin)$ and, {\color{black} by uniqueness of $\Lambda$}, so is $\hat\balpha^{N}=(\hat\alpha^{1,N},\dots,\hat\alpha^{N,N})$ with $\hat\alpha^{i,N}_t = \Lambda(t,   Y^{i,N}_t)$ for the control problem $V^{N,k,\varphi}_c:=V^{N, k,\varphi}_c(\muin,\mufin)$, i.e., 
	\begin{align*}
			V^{N,k,\varphi}_c  = \frac1N\sum_{i=1}^N\E\bigg[\int_0^Tf(t,  X^{i,N}_t, \hat \alpha_t^{i,N},L^N( \X^{N}_t))\diff t + kg^{\varphi}(\Lc(X^{i,N}_T)) \bigg],\;
			V^{k,\varphi} _c = \E\bigg[\int_0^Tf(t,  X_t, \hat \alpha_t,\Lc( X_t))\diff t + kg^{\varphi}(\Lc(X_T)) \bigg].
	\end{align*}
{Recall now that the derivatives of $(x,a,\mu)\longmapsto f(t,x,a,\mu)$ have linear growth, see \Cref{rmk.assumptions}\ref{rmk.assumptions.1}}.	
Thus, for $( \widetilde X^i, \widetilde Y^i)$, $i=1,\dots,N$, i.i.d.~copies of $( X, Y, Z)$ solution to \eqref{eq:fbsde.MFSP.proof.c} driven by $W^i$ and $\tilde\alpha_t^i = \Lambda(t, \widetilde Y_t^i)$, there exists a linearly growing function $\beta$ such that
\begin{align}\label{convergence.step1}
\begin{split}
|V^{N,k,\varphi}_c -V^{k,\varphi}_c |& \leq \frac1N\sum_{i=1}^N\E\bigg[ \int_0^T \beta( t,X^{i,N}_t, \widetilde X_t^i , \hat \alpha^{i,N} ,  \tilde  \alpha_t^i) \big( \|  X^{i,N}_t- \widetilde X_t^i \|+  \|\hat \alpha^{i,N}_t - \tilde \alpha_t^i\| +\Wc_2 (L^N( \X^{N}_t) , \mathcal{L}(X_t))\big) \d t\bigg]\\
&\; +k \bigg( \sup_{i\leq N}  g^{\varphi}(\Lc(X^{i,N}_T)) -g^{\varphi}(\mathcal{L}(X_T))\bigg).
\end{split}
\end{align}

{We now estimate the first term on the right-hand side of \eqref{convergence.step1} which we denote $I_1$. First, by \Cref{lemma.charact.c.k.phi}\ref{lemma.charact.c.k.phi.Npart} (recall we made the convention of suppressing the superscripts $k,\varphi$) there exists $C>0$, independent of $N$, such that}
\begin{align}\label{eq:prop.con.aux2}
M:=\sup_{i\leq N} \E \Big[ \sup_{t\in [0,T]}\Big \{ \|X^{i,N}_t\|^2+ \|Y^{i,N}_t\|^2+ \| \widetilde X_t^i\|^2+\| \widetilde Y_t^i\|^2\Big \} \Big]\leq C.
\end{align}
Thus, {using the fact that $\Lambda$ is Lipschitz--continuous, see \Cref{rmk.assumptions}\ref{rmk.assumptions.3}}, and applying Cauchy--Schwarz and Jensen's inequalities, we find that for a constant $C>0$ independent of $N$ it holds that
	\begin{align*}
		I_1 \leq &\, C   \frac1N\sum_{i=1}^N \E \bigg[\int_0^T \Big( \|X^{i,N}_t - \widetilde X_t^i\|^2 + \|Y^{i,N}_t - \widetilde Y_t^i\|^2 + \frac1N\sum_{j=1}^N  \|X^{j,N}_t - \widetilde X_t^j\|^2  +  \Wc_2^2 (L^N(\widetilde \X^N_t), \Lc(X_t)) \Big) \diff t\bigg]^{\frac{1}2}\\
\leq & C  \bigg(  \frac1N\sum_{i=1}^N \E \bigg[\int_0^T \big(2  \|X^{i,N}_t - \widetilde X_t^i\|^2 + \|Y^{i,N}_t - \widetilde Y_t^i\|^2   +  \Wc_2^2 (L^N(\widetilde \X^N_t), \Lc(X_t)) \big) \diff t\bigg]\bigg)^{\frac{1}2}.
	\end{align*}

This gives the convergence of both $I_1$ and the second term in \eqref{convergence.step1} to zero as $N\longrightarrow \infty$. Indeed, under the assumptions of \Cref{thm:convergence.c.mfsb.body.paper} we have that \Cref{thm.prop.chao.c} holds, and consequently, $(X^{i,N},Y^{i,N})$ converges to $(\widetilde X^i,\widetilde Y^i)$ in $\Hc^2$, the last term converges by the law of large numbers. 

\medskip

{\bf Step 2}. We show that
\[
\limsup_{N\to \infty} V^N_c (\muin,\mufin) \leq V_c (\muin,\mufin).
\]
Let $N\geq 1$ and $\alpha$ be admissible for $V_c (\muin,\mufin)$. 
	That is $\alpha \in \mathfrak{A}$, $  X_0^\alpha\sim\muin$ and $X_T^\alpha \msim   \mufin$ for $X^\alpha$ solution to \eqref{eq:control.SDE.MF}. 
	Now, let $\tilde \alpha^{i,N}$, $\widetilde X_0^i$, $i=1,\dots,N$, be i.i.d.~copies of $\alpha$ and $X_0^\alpha$ such that $\tilde \alpha^{i,N}$ is $\F^i$-adapted and $\widetilde X_0^{i}\sim \muin$. Let $X^{i,N,\tilde \alpha^N}$ denote the solution to \eqref{eq:controlled.SDE.NP}, i.e.,
\[
X^{i,N,\tilde \alpha^N}_t = \widetilde X_0^{i} + \int_0^t\tilde \alpha_u^{i,N} + b(u, X^{i,N,\tilde \alpha^i}_u,L^N(\X^{N,\tilde \alpha^N}_u))\diff u + \sigma W^i_t,
\]
where, as before, $\X^{N,  \tilde \alpha^N }:=(X^{1,N,\tilde \alpha^N},\dots,X^{N,N,\tilde \alpha^N})$ and $ \tilde \alpha^N :=(\tilde \alpha^{1,N},\dots,\tilde \alpha^{N,N})$. Note that $ \tilde \alpha^N \in \mathcal{A}^N$. Now, let $\widetilde X^{i}$, $i=1,\dots,N$, denote the solution to 
\[
\widetilde X^{i}_t = \widetilde X_0^{i}+ \int_0^t \tilde \alpha_u^{i,N} + b(u, \widetilde X^{i}_u,\Lc(\widetilde X^{i}_u))\diff u + \sigma W^i_t,
\]
and note that by weak uniqueness we have that $\widetilde X^{i}_T\msim \mufin$.
	Moreover, by the law of large numbers $L^N\big(\widetilde \X^N\big)$ converges to $\Lc(X^{\alpha })$, as $N\longrightarrow \infty$, in the $\Wc_1$ distance.
	Consequently, the estimate 
\[
\E \bigg[\sup_{t\in[0,T ]} \big\| X^{i,N,\tilde \alpha^N }_t-\widetilde X^{i}_t\big\|\bigg] \leq C \int_0^T \E \big[   \Wc_1(L^N(\widetilde \X^{N}_r) , \mathcal{L}(X^{ \alpha}_r)) \big] \diff r,
\]
which holds for $C>0$, shows that $\Lc\big(X^{i,N,\tilde \alpha^N }\big)$ converges to $\Lc\big(\widetilde X^{i}\big)$ in the $\Wc_1$ distance for $i=1,\dots,N$. 
	Consequently, $g_c\big(\Lc(X^{i,N,\tilde \alpha^N }_T)\big)$ tends to zero as $N\longrightarrow \infty$.\medskip

Now, see \Cref{rmk.duality.Nprob}, using the fact that since $V^N_c (\muin,\mufin)<\infty$, $N\geq1$, we have that
\[
V^N_c (\muin,\mufin)=\sup_{k\geq 1}\sup_{\varphi\in \Phi}  V^{N,k,\varphi}_c (\muin,\mufin).
\]
It then follows from \eqref{eq.dualitywot.bary} and convergence of $g_c(\Lc(X^{i,N,\tilde \alpha^N }_T))$, that for any $\eps>0$, there is $N$ sufficiently large so that
\begin{align}\label{eq:prop.con.aux1}
V^N_c (\muin,\mufin)= \sup_{k\geq 1}\sup_{\varphi\in \Phi}  V^{N,k,\varphi}_c (\muin,\mufin) \leq \frac1N\sum_{i=1}^N\E\bigg[\int_0^Tf(t, X^{i,N, \tilde \alpha^N}_t, \tilde \alpha_t^{i,N},L^N(\X^{N, \tilde \alpha^N}_t))\diff t  \bigg]+ \eps.
\end{align}
Now note that as in {\bf Step 1}, we have that
\[
\frac1N\sum_{i=1}^N\E\bigg[\int_0^T\big( f(t, X^{i,N, \tilde \alpha^N}_t, \tilde \alpha_t^{i,N},L^N(\X^{N, \tilde \alpha^N}_t))-f(t, \widetilde X^{i}_t, \tilde \alpha_t^{i,N},\Lc(\widetilde X^{i}_t))\big) \diff t  \bigg]\longrightarrow 0, \text{ as } N\longrightarrow \infty,
\]
Thus, since $\tilde \alpha^{i,N}\sim\alpha \in \Ac$, by the arbitrariness of $\eps$ we find that
\[
\limsup_{N\to \infty}  V^N_c (\muin,\mufin)\leq   \E\bigg[\int_0^Tf(t, X_t, \tilde \alpha_t,\mathcal{L}(\widetilde X_t)) \diff  t   \bigg]
\]
Since $\tilde \alpha^{i,N}\sim\alpha \in \Ac$, $\widetilde X_0^{i}\sim \muin$ and $\widetilde X_T^{i}\msim \mufin$ the claim follows.\medskip

\medskip

{\bf Step 3:} We show that
\[
\liminf_{N\to \infty} V^N (\muin,\mufin) \geq V (\muin,\mufin).
\]
Recall that by weak duality we have that $V^N (\muin,\mufin)  -V^{N,k,\varphi} (\muin,\mufin)\geq 0$ for any $k\geq 1$, $\varphi\in \Phi$, and thus 
\begin{align*}
V^N(\muin,\mufin)  - V(\muin,\mufin)   &=  V^N(\muin,\mufin)   -V^{N,k,\varphi}(\muin,\mufin)+ V^{N,k,\varphi}(\muin,\mufin)  - V(\muin,\mufin) \\
& \geq   V^{N,k,\varphi}(\muin,\mufin)   -V^{k,\varphi}(\muin,\mufin)  +V^{k,\varphi}(\muin,\mufin) - V^{k}(\muin,\mufin)  +V^{k}(\muin,\mufin)  - V(\muin,\mufin) .
\end{align*}
The claim follows since the first term on the right goes to zero by {\bf Step 1}, the second term goes to zero, alongside a maximizing sequence $(\varphi^m)_{m\geq 1}\subseteq \Phi$, by \eqref{eq.supinf.fixedk} and the third goes to zero by \eqref{eq.limitink}. {\bf Step 2} and
{\bf Step 3} establish the result.
\end{proof}

\begin{remark}\label{rmk.convergenceN.mainproblem}
{Let us now elaborate on the statement given in {\rm \Cref{rmk:convergence}}. 
	As it is clear from the proof of {\rm \Cref{thm:convergence.c.mfsb.body.paper}}, the key argument is Step 1, which is a consequence of {\rm\Cref{thm.prop.chao.c}}.
	It turns out that if $\mufin$ admits a displacement convex penalty function $g$, one can obtain {\rm\Cref{thm.prop.chao.c}} for the {\rm FBSDE}s associated with $V^{k}(\muin,\mufin)$ and $V^{N,k}(\muin,\mufin)$.
	We stress these are the penalized versions of $V(\muin,\mufin)$ and $V^{N}(\muin,\mufin)$ given by \eqref{eq:problem} and \eqref{eq:N.problem}, respectively.
	Indeed, in this case, the same argument in {\rm\Cref{thm.prop.chao.c}} is carried out for the {\rm FBSDE}s \eqref{eq:fbsde.reg.MFSP} and $(X^{i,N,k}, Y^{i,N,k},Z^{i,j,N,k})$, $i=1,\dots,N$, given by \eqref{eq:N.fbsde.proof.main.c} with $Y_T^{i,N,k}=k \partial_\mu g(\Lc (X^{i,N,k}_T)(X_T^{i,N,k})$.
	The proof follows as the corresponding inequality in \eqref{eq.disp.conv.proof} holds since $g$ is displacement convex.}
\end{remark}

\begin{remark}\label{rmk.convergenceN.regularity}
Let us now further discuss the unviability of two approaches based on other known scenarii in which the study of the convergence problem in mean field games (and uniqueness of mean field games) is often achieved without the need to impose convexity assumptions. Indeed, in the literature, either small time conditions or monotonicity/convexity conditions are always needed to achieve strong convergence results {\rm \cite{cardaliaguet2015master,jackson2023quantitative,lauriere2021convergence}}.
In our setting, for $k$ fixed, the {\rm FBSDE} associated with $V^{k}(\muin,\mufin)$ is also well-posed over a small time horizon, said $T_o^k$. This suggests two frameworks for the study of the convergence result.
\begin{enumerate}[label=$(\roman*)$, ref=.$(\roman*)$,wide,  labelindent=0pt]
\item A first approach would be based on the results of {\rm \cite{chassagneux2014probabilistic}} which provide conditions for the well-posedness of bounded solutions to the so-called master equation in two scenarii. The first one is a small time horizon $T_o^k$, whereas the second scenario iterates the previous one to be able to obtain a result over arbitrary time horizons. Though this second result seems viable at first sight, the conditions under which the iteration can be carried up impose assumptions on the type of interaction between the particles, which are more restrictive than those in {\rm \Cref{thm:convergence.mfsb}}, it also requires extra regularity assumptions on the data and, perhaps more critically, it imposes the same convexity assumptions on the penalty function prescribed by {\rm \Cref{def.dispconv}}.
\item Alternatively, the theory of Malliavin calculus allows us to identify another scenario under which one can show the existence of $T_o^k <\infty$, such that for $T\leq T_o^k$, there is a bounded solution $(Y^k,Z^{k})$ to \eqref{eq:fbsde.MFSP.proof.c}.\label{rmk.convergenceN.regularity.ii}
\end{enumerate}
In either setting, the particle system associated with $V^{N,k}(\muin,\mufin)$ converges to that of $V^{k}(\muin,\mufin)$ over $T_o^k$, for every $k\geq 1$. However, to establish $V^N(\muin,\mufin)\longrightarrow V (\muin,\mufin)$ we see from {\bf Step 3} in the proof of {\rm Theorem \ref{thm:convergence.c.mfsb.body.paper}} that there needs to exist $T_o$ under which the convergence of the associated {\rm FBSDEs} holds uniformly in $k$. Unfortunately, in either $(i)$ or $(ii)$ above $T^k_o$ decreased to $0$ as we let $k$ go to infity.
\end{remark}

\subsection{The at most quadratic cost case: Proof of Theorem {\rm \ref{thm:convergence.mfsb}}}\label{sec.proof.entropic.conv}

The result follows from specializing \Cref{thm:convergence.c.mfsb.body.paper} to the case in which $f$ and $b$ are as in \Cref{thm:existence.mfsb} with the strengthening that $a\longmapsto f_1(t,a)$ is uniformly strongly convex.
	In this case, the Hamiltonian \eqref{eq:hamiltonial.particle.system} and the functionals $F$ and $B$ takes the form
\begin{align*}
	H^N_1(t, {\bf y})&=\sup_{a^1,\dots,a^N}\bigg\{ \frac1{N}\sum_{i=1}^N    \frac 12 \|a^i\|^2   + \sum_{i=1}^N   a^i\cdot y^i\bigg\} ,\;
	H^N_2(t,{\bf x}, {\bf y},L^N({\bf x}))= -\frac1N\sum_{i=1}^N  \sum_{j=1}^N ( x^i - x^j)  \cdot y^{i} ,\\	 
		 F(t,x,y,L^N({\bf x},{\bf y}))&=- y + \frac1N\sum_{j=1}^N  y^{j},\; B(t,x,y,L^N({\bf x}))=-y-\frac1N\sum_{j=1}^N ( x - x^j).
\end{align*}

 
Let us argue the result. We will use \Cref{thm:convergence.c.mfsb.body.paper} for which we must verify {\rm Assumptions \ref{ass.b.f}} and \ref{ass.F}. 
	By assumption $f_1$ is strongly convex and $f_2\equiv 0$, so, as in the proof of {\rm \Cref{thm:existence.mfsb}}, \Cref{ass.b.f} and \Cref{ass.F} hold in light of the choice of $\Psi$.
	The result follows from \Cref{thm:convergence.c.mfsb.body.paper}. 
	We remark that in the entropic case, $\Lambda(t,y)=-y$ and \eqref{eq:displaassumpLambda} holds with equality and $\lambda_{f_1}=1$\hfill \qed

\begin{appendix}
\section{Appendix}\label{appendix}


We collect here the belated proofs of some technical results.\medskip

The next result states that the regularity conditions on $\muin$, $\mufin$ and $b$ in \cite{backhoff2020mean} are sufficient for the feasibility assumption, i.e., $V(\muin,\mufin)<\infty$, for cost functions beyond the quadratic cost.

\begin{lemma}\label{lemma.feasibility}
Let $A=\R^\xdim$, $b(t, x, \mu) = \int_{\R^m}\nabla\Psi(x-\tilde x)\mu(\d \tilde x)$ for $\Psi:\R^\xdim\longrightarrow \R$ symmetric, twice continuously differentiable and satisfying $\sup\{v\cdot\nabla^2 W(z)\cdot v: z,v\in \R^\xdim, \|v\|=1\}<\infty$, and $f:[0,T]\times \RR^\xdim \times \R^d \times \cP_2(\RR^\xdim)\longrightarrow \RR$ be measurable and of at most quadratic growth, i.e., \eqref{eq.qgrowth} holds. Suppose that $\muin,\mufin\in \Pc_2(\R^\xdim)$ and $\tilde \Fc(\muin),\tilde\Fc(\mufin)<\infty$, where
\[
\Pc_2(\R^\xdim)\ni  \mu\longmapsto \tilde \Fc(\mu)=\begin{cases}
\int_{\R^\xdim} \log (\mu(x))\mu(\d x)+ \int_{\R^\xdim} b(t,x,\mu) \mu(\d x), &\text{ if } \mu\ll \lambda \\
\infty, & \text{\rm otherwise.}
\end{cases}
\]
Then, $V(\muin,\mufin)<\infty$.
\begin{proof}
Under the above assumptions, it follows from \cite[Proposition 1.1]{backhoff2020mean} that $\Vc_{\rm e}(\muin,\mufin)<\infty$. Consequently, by \cite[Lemma 1.1]{backhoff2020mean}, there is an $A$-valued, progressively measurable process $\alpha$ such that
\[
\E^\P\bigg[\int_0^T\|\alpha_t\|^2\d t\bigg]<\infty,\, X_0^\alpha \sim \muin,\text{ and, } X_T^\alpha \sim \mufin,
\]
for $X^\alpha$ solution to \eqref{eq:control.SDE.MF}. It then follows from standard estimates that $\E\big[ \sup_{t\in [0,T]} \|X_t\|^2\big]<\infty$ and thus $V(\muin,\mufin)<\infty$.
\end{proof}
\end{lemma}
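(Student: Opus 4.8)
The plan is to reduce the statement to the already known feasibility of the quadratic (entropic) mean field Schr\"odinger problem $\Vc_{\rm e}(\muin,\mufin)$ established in \cite{backhoff2020mean}: I will produce one admissible control steering $\muin$ to $\mufin$ along which the at most quadratic cost $f$ is integrable, which immediately bounds $V(\muin,\mufin)$ from above. Under the stated hypotheses on $\Psi$ (symmetric, twice continuously differentiable, with bounded Hessian) and the conditions $\tilde\Fc(\muin),\tilde\Fc(\mufin)<\infty$, \cite[Proposition 1.1]{backhoff2020mean} yields $\Vc_{\rm e}(\muin,\mufin)<\infty$. The sign difference between the drift $b(t,x,\mu)=\int_{\R^\xdim}\nabla\Psi(x-\tilde x)\mu(\diff\tilde x)$ appearing in the present lemma and the drift $-\nabla\Psi\ast\mu$ in \eqref{MFSP:entropic} is harmless, since $\Psi$ is symmetric and, in any case, the estimates below use only the linear growth of $b$.

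Next, \cite[Lemma 1.1]{backhoff2020mean} turns the finiteness of $\Vc_{\rm e}$ into the existence of an $\R^\xdim$-valued, $\FF$-progressively measurable process $\alpha$ with $\E^\P\big[\int_0^T\|\alpha_t\|^2\diff t\big]<\infty$ such that the controlled McKean--Vlasov diffusion \eqref{eq:control.SDE.MF} driven by $\alpha$ (with the drift $b$ above) satisfies $X_0^\alpha\sim\muin$ and $X_T^\alpha\sim\mufin$. The square-integrability of $\alpha$ ensures $\alpha\in\mathfrak{A}$ (recall $\mathfrak{A}$ only requires $\E[\int_0^T f_1(t,\alpha_t)\diff t]<\infty$ and $f_1$ has at most quadratic growth, e.g.\ $f_1\equiv\|a\|^2/2$ in the entropic case), hence $\alpha$ is admissible for $V(\muin,\mufin)$.

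It remains to control the cost. Because $\nabla^2\Psi$ is bounded, $\nabla\Psi$ grows at most linearly, hence $b$ has linear growth in $(x,\mu)$; together with $\muin\in\Pc_2(\R^\xdim)$ (so $\E[\|X_0^\alpha\|^2]<\infty$) and the square-integrability of $\alpha$, standard moment estimates for \eqref{eq:control.SDE.MF} (Cauchy--Schwarz on the drift, Burkholder--Davis--Gundy on $\sigma W$, then Gr\"onwall) give
\[
\E\Big[\sup_{t\in[0,T]}\|X_t^\alpha\|^2\Big]\le C\Big(1+\int_{\R^\xdim}\|x\|^2\muin(\diff x)+\E\Big[\int_0^T\|\alpha_t\|^2\diff t\Big]\Big)<\infty .
\]
Then \eqref{eq.qgrowth}, together with $\int_{\R^\xdim}\|x\|^2\Lc(X_t^\alpha)(\diff x)=\E[\|X_t^\alpha\|^2]$, yields
\[
\E\Big[\int_0^T f\big(t,X_t^\alpha,\alpha_t,\Lc(X_t^\alpha)\big)\diff t\Big]\le \ell_f\,\E\Big[\int_0^T\Big(1+\|X_t^\alpha\|^2+\|\alpha_t\|^2+\E\big[\|X_t^\alpha\|^2\big]\Big)\diff t\Big]<\infty .
\]
Since $\alpha$ is admissible and steers $\muin$ to $\mufin$, the left-hand side is an upper bound for $V(\muin,\mufin)$, so $V(\muin,\mufin)<\infty$.

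I do not anticipate a genuine obstacle: the argument is a transfer principle plus routine stochastic calculus estimates. The only points requiring care are (i) invoking the correct feasibility result from \cite{backhoff2020mean} and reconciling its drift convention with the one here via the symmetry of $\Psi$, and (ii) checking that the square-integrable control supplied by \cite[Lemma 1.1]{backhoff2020mean} belongs to $\mathfrak{A}$, which relies only on the at most quadratic growth of $f_1$.
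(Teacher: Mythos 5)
Your proposal is correct and follows essentially the same route as the paper's proof: finiteness of $\Vc_{\rm e}(\muin,\mufin)$ via \cite[Proposition 1.1]{backhoff2020mean}, extraction of a square-integrable admissible control steering $\muin$ to $\mufin$ via \cite[Lemma 1.1]{backhoff2020mean}, and then standard second-moment estimates combined with the quadratic growth bound \eqref{eq.qgrowth} to bound the cost. You merely make explicit the steps the paper compresses into ``standard estimates,'' including the admissibility check $\alpha\in\mathfrak{A}$.
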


For the reader's convenience, we present here the statement of \citeauthor*{backhoff2020nonexponential} \cite[Lemma A.1]{backhoff2020nonexponential}, which was used throughout the document. For the sake of completeness, we mention that $f_1$ as in {\rm \Cref{ass.b.f}} satisfied the coercivity condition \cite[${\rm (TI)}$]{backhoff2020nonexponential}, namely, $f_1(t, a)/\|a\|\longrightarrow \infty,$ as $\|a\|\longrightarrow \infty$.

\begin{lemma}\label{lem:BaLaTa}
Let $f_1$ be as in {\rm \Cref{ass.b.f}}. Suppose $(\alpha^n)_{n\geq 1}$ is a sequence of $\L^1([0,T])$-valued random variables possibly defined on different probability spaces. Let $A_n(t):=\int_0^t \alpha^n_s \d s$ and suppose there exists $C>0$ such that, for each $n$,
\[
\E\bigg[ \int_0^t f_1(s,\alpha_s^n)\d s\bigg]\leq C.
\]
Then there exist a continuous process $A$, a subsequence $A_{n_k}$ which converges in law in $\Cc([0,T],\R^m)$ to $A$, and $\alpha$ such that
\[
\liminf_{k\to \infty} \E\bigg[\int_0^t f_1(s,\alpha_s^{n_k})\d s\bigg]\geq \E\bigg[\int_0^t f_1(s,\alpha_s)\d s\bigg]
\]
and $A_t=\int_0^t \alpha_s\d s$. In particular, $(A_n)_{n\geq 1}$ is tight.
\end{lemma}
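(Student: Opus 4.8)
The plan is to combine a de la Vallée-Poussin/superlinearity argument with the classical lower semicontinuity of convex integral functionals under weak $L^1$-convergence. Throughout, the superlinear growth $f_1(t,a)/\|a\|\to\infty$ (condition $(\mathrm{TI})$) carries the argument: for every $M>0$ it provides $R_M>0$, uniform in $t$, with $f_1(t,a)\ge M\|a\|$ whenever $\|a\|\ge R_M$. Setting $\xi_n^M:=\int_0^T\|\alpha^n_s\|\mathbf{1}_{\{\|\alpha^n_s\|>R_M\}}\diff s$, the standing bound $\EE[\int_0^Tf_1(s,\alpha^n_s)\diff s]\le C$ yields $\sup_n\EE[\xi_n^M]\le C/M$, so the family $\{(s,\omega)\mapsto\alpha^n_s(\omega)\}_n$ is uniformly integrable on $([0,T]\times\Omega,\diff s\otimes\diff\PP)$. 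With this in hand I would first establish tightness of $(A_n)_n$ in $\Cc([0,T],\R^m)$: since $A_n(0)=0$, by the usual modulus-of-continuity criterion it suffices to control $w(A_n,\delta):=\sup_{|t-s|\le\delta}\|A_n(t)-A_n(s)\|$, and splitting the integrand at level $R_M$,
\[
w(A_n,\delta)\le\sup_{|t-s|\le\delta}\int_s^t\|\alpha^n_r\|\diff r\le R_M\,\delta+\xi_n^M .
\]
Given $\epsilon,\eta>0$, choose first $M$ with $C/M\le\eta\epsilon/2$ and then $\delta$ with $R_M\delta<\eta/2$; Markov's inequality applied to $\xi_n^M$ gives $\sup_n\PP(w(A_n,\delta)\ge\eta)\le\epsilon$, whence tightness.

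Next, by Prokhorov's theorem a subsequence $A_{n_k}$ converges in law in $\Cc([0,T],\R^m)$, and by the Skorokhod representation theorem I would pass to a probability space carrying copies $\bar A_{n_k}\stackrel{d}{=}A_{n_k}$ converging, almost surely and uniformly in $t$, to a continuous process $A$. For a.e.\ $\omega$ each $\bar A_{n_k}$ is absolutely continuous (its law is that of $\int_0^\cdot\alpha^{n_k}_s\diff s$), hence carries a density $\bar\alpha^{n_k}$ with $\EE[\int_0^Tf_1(s,\bar\alpha^{n_k}_s)\diff s]=\EE[\int_0^Tf_1(s,\alpha^{n_k}_s)\diff s]\le C$. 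By the uniform integrability above and the Dunford--Pettis theorem, a further subsequence of $(\bar\alpha^{n_k})$ converges weakly in $L^1([0,T]\times\bar\Omega)$ to some $\alpha$. Testing this weak convergence against functions of the form $\mathbf{1}_{[0,t]}(s)H(\omega)$ with $H$ bounded, and using that $\bar A_{n_k}(t)\to A(t)$ also in $L^1$ (again by uniform integrability), one identifies $A_t=\int_0^t\alpha_s\diff s$ for every $t$, hence, by continuity of both sides, for all $t$ almost surely.

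Finally, convexity of $a\mapsto f_1(t,a)$ makes $\beta\mapsto\EE[\int_0^Tf_1(s,\beta_s)\diff s]$ a convex, strongly (hence weakly) lower semicontinuous functional on $L^1([0,T]\times\bar\Omega)$, so $\EE[\int_0^Tf_1(s,\alpha_s)\diff s]\le\liminf_k\EE[\int_0^Tf_1(s,\bar\alpha^{n_k}_s)\diff s]=\liminf_k\EE[\int_0^Tf_1(s,\alpha^{n_k}_s)\diff s]$, which also shows $\alpha\in L^1$; relabelling the subsequence completes the proof, tightness of $(A_n)_n$ having already been shown. I expect the main obstacle to be the identification step: promoting a.s.\ uniform convergence of the primitives to a weak-$L^1$ statement about their densities compatible with the Skorokhod coupling, and thereby knowing that the limit $A$ stays absolutely continuous --- this is exactly where superlinearity of $f_1$ is indispensable, since without it the limit could acquire a singular part. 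One should also check that $f_1$ is a jointly measurable normal integrand for the lower semicontinuity theorem to apply, which is ensured by the continuity assumptions on $f_1$ in \Cref{ass.b.f}.
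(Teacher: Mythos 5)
The paper does not actually prove this lemma: it is quoted verbatim from \cite[Lemma A.1]{backhoff2020nonexponential}, so there is no in-paper argument to compare against. Your proof is, however, a correct and essentially self-contained derivation along the standard lines for such coercivity-based compactness statements: the superlinear growth $f_1(t,a)\ge C_1+C_2\|a\|^p$, $p>1$, gives the uniform bound $\sup_n\E[\xi_n^M]\le C/M$, which simultaneously yields the equicontinuity estimate $w(A_n,\delta)\le R_M\delta+\xi_n^M$ (hence tightness in $\Cc([0,T],\R^m)$, since $A_n(0)=0$) and, after the Skorokhod coupling, the uniform integrability needed for Dunford--Pettis; the identification of the weak $\L^1$ limit of the densities with the derivative of $A$ via test functions $\mathbf{1}_{[0,t]}(s)H(\omega)$ is the right mechanism, and weak lower semicontinuity of the convex, nonnegative integral functional closes the argument. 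Two points deserve one explicit line each if you write this up: (i) the quantities you transfer through the equality in law, namely $\int_0^T f_1(s,\dot a_s)\,\diff s$ and $\int_0^T\|\dot a_s\|\mathbf{1}_{\{\|\dot a_s\|>R\}}\,\diff s$, must be Borel functionals of the path $a\in\Cc([0,T],\R^m)$ (extended by $+\infty$ off the absolutely continuous paths); this holds because the a.e.\ derivative is a measurable function of the path (limit of difference quotients) and $f_1$ is a Carathéodory integrand, but it is exactly the kind of step that silently fails if skipped; (ii) the uniform integrability of the real random variables $\int_0^T\|\bar\alpha^{n_k}_s\|\,\diff s$, which you need to upgrade the a.s.\ convergence $\bar A_{n_k}(t)\to A(t)$ to $\L^1$ convergence in the identification step, follows from the same splitting $\le R_MT+\xi^M_{n_k}$ and should be stated rather than absorbed into ``again by uniform integrability.'' Your closing observation that superlinearity is what prevents the limit from acquiring a singular part is precisely the reason the lemma is stated with the coercive $f_1$ rather than a mere $\L^1$ bound.
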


\begin{lemma}\label{lem:weakLpprojection}
Let $p>1$ and $(X^n)_{n\geq 1} \subseteq \L^p([0,T]\times\Omega)$ bounded. 
	Then, up to a subsequence, $(X^{n})_{n\geq 1}$ converges weakly in $\L^p([0,T]\times\Omega)$ and there is $D\subseteq [0,T]$, ${\rm Leb}(D)=T$, such that for every $t\in D$, $(X^{n}_t)_{n\geq 1}$ converges weakly in $\L^1(\Omega)$.
	In particular, $D$ is dense.
\begin{proof}
We first note that the existence of a weakly convergence subsequence follows immediately from the boundedness of $(X^n)_{n\geq 1}$ in $\L^p([0,T]\times\Omega)$, $1<p<\infty$, see \citeauthor*{ambrosio2000functions} \cite[Theorem 1.36]{ambrosio2000functions}. Thus, there is $X\in \L^p([0,T]\times\Omega)$ such that, using again $(X^{n})_{n\geq 1}$ to denote the subsequence, $(X^{n})_{n\geq 1}$ converges to $X$ weakly in $\L^p([0,T]\times\Omega)$. We also note that the last statement follows from the second since ${\rm Leb}(D)=T$ implies $D$ is dense in $[0,T]$. \medskip

Let us now argue the second statement. By Mazur's lemma, there exists $(\gamma^{k}_n)_{n\leq k \leq N(n)}$, $n\geq 1$, $\sum_{k=n}^{N(n)} \gamma^k_n=1$, for all $n\geq 1$, $\gamma^{k}_n\geq 0$, such that $\tilde X^n:=\sum_{k=n}^{N(n)} \gamma^k_n  X^k$ converges to $ X$ in $\L^p([0,T]\times\Omega)$. We claim that $\tilde X^n$ satisfies the second statement. That is there is $D\subseteq [0,T]$, ${\rm Leb}(D)=T$, such that for every $t\in D$, $(\tilde X^{n}_t)_{n\geq 1}$ converges weakly in $\L^1(\Omega)$. To show the claim, let $t\in [0,T]$, $K\in \L^\infty(\Omega)$, $\delta>0$ and note that
	\begin{align}\label{lem:weakLpprojection.1}
	\E\big[K(\tilde X^{n}_t-X_t)\big]=  \E\bigg[  \frac{1}\delta \int_{t}^{t+\delta } K (\tilde X^{n}_t-\tilde X^{n}_s) \diff s\bigg]  + \E\bigg[  \frac{1}\delta \int_{t}^{t+\delta } K (\tilde X^{n}_s-X_s) \diff s\bigg]  + \E\bigg[  \frac{1}\delta \int_{t}^{t+\delta } K (X_s-X_t) \diff s\bigg]  .
	\end{align}
	
We now notice that
\[
 \frac{1}\delta \int_{t}^{t+\delta } K (\tilde X^{n}_s-X_s) \diff s  \leq   \sup_{\delta>0} \frac{1}\delta \int_{t}^{t+\delta }  \|K(\tilde X^{n}_s-X_s)\|  \diff s=  \Mc [K(\tilde X^n-X)](t), 
\]
where for any integrable function $f:[0,T]\longrightarrow \R^n$, $\Mc[f](t)$ denotes the Hardy–Littlewood maximal operator, given by
\[
\Mc[f](t):= \sup_{\delta>0} \frac{1}\delta \int_{t}^{t+\delta }  \|f(s)\|  \diff s.
\]
By the strong Hardy–Littlewood maximal inequality, see \citeauthor{rudin1987real} \cite[Theorem 8.18 and Equation $(6)$]{rudin1987real}, there exists a constant $C>0$, depending only on $p$ (the dimension of $[0,T]$ is 1), such that 
\[
\E\big[ \|  \Mc [K(\tilde X^n-X)]\|_{\L^p([0,T])}^p\big] \leq C^p\E\big[ \|K(\tilde X^n-X)\|_{\L^p([0,T])}^p\big]=C^p\|K(\tilde X^n-X)\|_{\L^p([0,T]\times\Omega)}^p,
\]
 Thus, $\Mc [K(\tilde X^n-X)]$ converges to zero in $\L^p([0,T]\times \Omega)$. In particular, it converges weakly in $\L^1([0,T]\times \Omega)$, meaning that
\[
\int_0^T\E\big[ \Mc [K(\tilde X^n-X)](t)\big] \diff t \longrightarrow 0,\text{ as, }n\longrightarrow\infty.
\]
Since $\Mc [K(\tilde X^n-X)]$ is non-negative, we deduce that, up to a subsequence, $\E [ \Mc [K(\tilde X^n-X)](t) ]$ converges to zero as $n\longrightarrow\infty$ for almost every $t\in [0,T]$. Back in \eqref{lem:weakLpprojection.1}, we have that

\[
\E\big[K(\tilde X^{n}_t-X_t)\big]\leq  \E\bigg[  \frac{1}\delta \int_{t}^{t+\delta } K (\tilde X^{n}_t-\tilde X^{n}_s) \diff s\bigg]  + \E\big[  \Mc [K(\tilde X^n-X)](t)\big] + \E\bigg[  \frac{1}\delta \int_{t}^{t+\delta } K (X_s-X_t) \diff s\bigg] .
\]
We now let $\delta\longrightarrow 0$ first, so that the first and third terms go to zero for almost every $t\in [0,T]$ by Lebesgue differentiation theorem, see \cite[Theorem 7.10]{rudin1987real}, and then let $n\longrightarrow \infty$. This proves the claim.\medskip

With this, we notice that
\[
\E[ K( \tilde X^n_t-X_t)]=\sum_{k=n}^{N(n)}\gamma^k_n\E[ K(X_t^k-X_t)]\longrightarrow 0, \text{ as } n\longrightarrow \infty.
\]
so, up to a subsequence, \( \E[ K(X_t^n-X_t)]\longrightarrow 0, \text{ as } n\longrightarrow \infty\) as desired.
\end{proof}
\end{lemma}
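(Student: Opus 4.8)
The plan is to dispose of the easy assertions first and then spend the real effort on producing the exceptional-free set $D$. Since $p>1$, the space $\L^p([0,T]\times\Omega)$ is reflexive, so a norm-bounded sequence is weakly sequentially precompact, see \cite[Theorem 1.36]{ambrosio2000functions}. Hence, passing to a subsequence which I relabel $(X^n)_{n\ge1}$, there is $X\in\L^p([0,T]\times\Omega)$ with $X^n\rightharpoonup X$ weakly in $\L^p([0,T]\times\Omega)$; this is the first claim, and ``$D$ dense'' is automatic as soon as ${\rm Leb}(D)=T$. I would stress at the outset why one cannot simply argue slice by slice: the uniform bound $\int_0^T\|X^n_t\|_{\L^p(\Omega)}^p\diff t\le C^p$ does \emph{not} entail $\sup_n\|X^n_t\|_{\L^1(\Omega)}<\infty$ for a.e.\ $t$, so there is no free slice-wise $\L^1(\Omega)$-compactness to invoke, and some uniformisation over $n$ is unavoidable.

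First I would upgrade weak convergence to strong convergence of convex combinations: by Mazur's lemma there are $N(n)\ge n$ and weights $\gamma^k_n\ge0$ with $\sum_{k=n}^{N(n)}\gamma^k_n=1$ such that $\tilde X^n:=\sum_{k=n}^{N(n)}\gamma^k_n X^k\to X$ strongly in $\L^p([0,T]\times\Omega)$. Under the isometric identification $\L^p([0,T]\times\Omega)\cong\L^p([0,T];\L^p(\Omega))$ this reads $\int_0^T\|\tilde X^n_t-X_t\|_{\L^p(\Omega)}^p\diff t\to0$, so along a further subsequence $\tilde X^n_t\to X_t$ in $\L^p(\Omega)$, hence in $\L^1(\Omega)$ (as $\PP$ is a probability measure), for a.e.\ $t$; in particular the convex combinations converge weakly in $\L^1(\Omega)$ for $t$ in a full-measure set. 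An alternative route that avoids the Fubini identification, and the one I would actually write down, is to fix $K\in\L^\infty(\Omega)$, insert a time average and split
\[
\EE[K(\tilde X^n_t-X_t)]=\EE\Big[\tfrac1\delta\!\int_t^{t+\delta}\!K(\tilde X^n_t-\tilde X^n_s)\diff s\Big]+\EE\Big[\tfrac1\delta\!\int_t^{t+\delta}\!K(\tilde X^n_s-X_s)\diff s\Big]+\EE\Big[\tfrac1\delta\!\int_t^{t+\delta}\!K(X_s-X_t)\diff s\Big],
\]
bound the middle term by $\EE[\Mc(K(\tilde X^n-X))(t)]$ with $\Mc$ the Hardy--Littlewood maximal operator in time, use the maximal inequality \cite[Theorem 8.18]{rudin1987real} to get convergence of that bound to $0$ in $\L^p([0,T]\times\Omega)$ and hence for a.e.\ $t$, and send $\delta\downarrow0$ in the remaining two terms via Lebesgue differentiation \cite[Theorem 7.10]{rudin1987real}; since there are only countably many $n$'s this produces one common full-measure set of good $t$'s.

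The hard part will be the last step: transferring slice-wise convergence from the convex combinations $\tilde X^n$ back to a genuine subsequence of $(X^n)$, with a single $D$ that serves every test function simultaneously. The lever is the identity $\EE[K(\tilde X^n_t-X_t)]=\sum_{k=n}^{N(n)}\gamma^k_n\,\EE[K(X^k_t-X_t)]$, which shows that a suitable weighted tail-average of $\EE[K(X^k_t-X_t)]$ vanishes; combined with the weak $\L^p$ convergence of $(X^n)$ itself and a diagonal extraction over a countable norming family $\{K_j\}\subset\L^\infty(\Omega)$ (legitimate since in the present setting $\Omega$ is Polish, so $\L^1(\Omega)$ is separable and bounded-weak convergence is metrizable), one extracts a subsequence of $(X^n)$ and a set $D$ with ${\rm Leb}(D)=T$ along which $\EE[K_j(X^n_t-X_t)]\to0$ for all $j$ and all $t\in D$, and then concludes $X^n_t\rightharpoonup X_t$ weakly in $\L^1(\Omega)$ on $D$ from the norming property together with the slice-wise $\L^1(\Omega)$-boundedness gained in the extraction. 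The delicate point --- deserving the most care --- is precisely that ``a weighted average of a bounded real sequence tends to $0$'' does not by itself yield ``a subsequence tends to $0$'', so the argument must genuinely feed in the weak $\L^p$ convergence of the original sequence and organise all the exceptional null sets (one per $n$ from the averaging step, one per $K_j$ from the diagonalisation) into a single null set.
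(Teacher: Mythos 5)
Your proposal tracks the paper's own proof almost verbatim through its main body: reflexivity for the weak $\L^p$ limit, Mazur's lemma for the strongly convergent convex combinations $\tilde X^n$, the three-term time-averaged decomposition, the Hardy--Littlewood maximal inequality for the middle term, and Lebesgue differentiation for the outer terms. That part is sound (modulo the routine reduction from ``all $K\in\L^\infty(\Omega)$'' to a countable family, which you mention and the paper does not), and it does establish the slice-wise weak $\L^1(\Omega)$ convergence \emph{for the convex combinations} $\tilde X^n_t$ on a common full-measure set $D$ --- as you note, this already follows more simply from $\tilde X^n\to X$ strongly in $\L^p([0,T];\L^p(\Omega))$ and Fubini.

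Where you and the paper part ways is the very last step, and you are right to single it out: the paper passes from $\sum_{k=n}^{N(n)}\gamma^k_n\,\E[K(X^k_t-X_t)]\to0$ to ``up to a subsequence, $\E[K(X^n_t-X_t)]\to0$'', which is not a valid implication (take $a_k=(-1)^k$ with weights $\gamma^n_n=\gamma^{n+1}_n=\tfrac12$). However, the repair you sketch --- diagonalising over a countable norming family while ``feeding in'' the weak $\L^p$ convergence of the original sequence --- cannot be completed, because weak convergence in $\L^p([0,T]\times\Omega)$ carries no pointwise-in-$t$ information; indeed the slice-wise assertion for the original sequence is false. Take $X^n_t(\omega):=\sin(nt)$: this is bounded in $\L^p([0,T]\times\Omega)$ and converges weakly to $0$, yet for $K\equiv1$ one would need $\sin(n_kt)$ to converge for a.e.\ $t$ along some subsequence; by Riemann--Lebesgue the limit would have to vanish a.e., whence $\int_0^T\sin^2(n_kt)\,\diff t\to0$ by dominated convergence, contradicting $\int_0^T\sin^2(n_kt)\,\diff t\to T/2$. (Your parenthetical that bounded-weak convergence in $\L^1$ is metrizable is also inaccurate without uniform integrability, but that is secondary.) So the gap you flagged is genuine and is shared by the paper; what the argument actually proves is the conclusion for the Mazur averages $\tilde X^n$, not for a subsequence of $(X^n)$, and the lemma would need to be restated in that form.
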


\section*{Proofs of the finite particle approximation}\label{appendix:characterization.Nparticle}
\paragraph*{Proof of \Cref{lemma.conv.existence}.}
Let us first notice that by weak duality we have that
\begin{gather}
V_c(\muin, \mufin)\geq \sup_{k\geq 1} V_c^k(\muin, \mufin),\; \text{and, }
V_c^k(\muin, \mufin)\geq \sup_{\varphi\in \Phi} V_c^{k,\varphi}(\muin, \mufin),
\label{eq.weakduality.kestimate}
\end{gather}
and, consequently,
\begin{align}
V_c(\muin, \mufin)\geq \sup_{k\geq 1}  \sup_{\varphi\in  \Phi} \inf_{\alpha \in \mathfrak{A}} J(\alpha,k,\varphi) .\label{eq.weakduality.k.phi.estimate}
\end{align}

We argue $(i)$. The proof follows that of \Cref{eq:lemma.k} with the difference that in \eqref{eq.lemmak.continuity}, i.e.,
	 \begin{align*}
	 	g_c(\Lc(\hat X^{\alpha^k}_T)) \leq \frac1k\Big(V_c(\muin,\mufin)- C \Big)+\frac{1}k,
	\end{align*}
	we exploit the fact that $\mu\longmapsto g_c(\mu)$ is lower semicontinuous with respect to the weak topology, see \cite[Theorem 2.9]{backhoff2019existence}, to deduce that $\Lc(X_T^{\hat \alpha})\msim \mufin$.\medskip

Let us argue $(ii)$. Note that \eqref{eq.supsupinf} follows from \eqref{eq.limitink} and \eqref{eq.supinf.fixedk} since $(V^{k}_c(\muin,\mufin))_{k\geq1}$ is increasing. 
	Let $k\geq 1$ and $\varphi\in  \Phi$ be fixed and note that there is a sequence $(\alpha^{k,\varphi,n})_{n\geq 1}$ such that
\begin{align}\label{eq.infsup.estimate}
V_c^{k,\varphi}(\muin, \mufin)\geq \E\bigg[\int_0^T f(t, X^{\alpha^{k,\varphi,n}}_t, \alpha_t^{k,\varphi,n},\mathcal{L}(X^{\alpha^{k,\varphi,n}}_t))\diff t \bigg]		+k g^\varphi(\Lc(X_T^{\alpha^{k,\varphi,n}}))  -\frac1n.
\end{align}

Notice that as $\varphi$ is bounded from below, say by $\ell\in \R$, $k g^\varphi(\Lc(X_1^{\alpha^{k,\varphi,n}}))=k\Lc(X_T^{\alpha^{k,\varphi,n}})(\varphi)- k \mufin(\varphi)\geq k\ell - k \mufin(\varphi)$, for all $n\geq 1$.
	Thus, since $V_c^k(\muin, \mufin) \leq V_c(\muin, \mufin) <\infty$ and $f_2$ is bounded from below, there exists a constant $C^{k,\varphi}>0$, independent on $n$, such that
\[
 \E\bigg[\int_0^1 f_1(t, \alpha_t^{k,\varphi,n})\diff t \bigg]\leq C^{k,\varphi}.
\]

Thus, as in \Cref{lemma:infty.problem-k.penalized}, we deduce the existence of $\alpha^{k,\varphi}\in \mathfrak{A}$ and the convergence in law of the associated controlled processes $X^{k,\varphi,n}:=X^{\alpha^{k,\varphi,n}}$ to $X^{k,\varphi}:=X^{\alpha^{k,\varphi}}$, as $n\longrightarrow \infty$, where $X^{k,\varphi}$ is uniquely given by the controlled SDE \eqref{eq:control.SDE.MF} with $\alpha^{k,\varphi}$.
Letting $n\longrightarrow \infty$ in \eqref{eq.infsup.estimate}, we deduce $\alpha^{k,\varphi}$ is optimal for $V_c^{k,\varphi}(\muin, \mufin)$, since  
\begin{align*}
V_c^{k,\varphi}(\muin, \mufin)\geq J(\alpha^{k,\varphi},k,\varphi).
\end{align*}
Thus, we have from \eqref{eq.weakduality.kestimate} that
\[
\pushQED{\qed}
V_c^{k}(\muin, \mufin)\geq \sup_{\varphi\in  \Phi} V_c^{k,\varphi}(\muin, \mufin) = \sup_{\varphi\in  \Phi}  J(\alpha^{k,\varphi},k,\varphi)\geq \inf_{\alpha
\in \mathfrak{A}} \sup_{\varphi\in  \Phi} J(\alpha,k,\varphi)=V_c^{k}(\muin, \mufin). \qedhere
\popQED
\]

\paragraph*{Proof of \Cref{lemma.charact.c.k.phi}.}
The proof of $(i)$ is analogous to that of \Cref{lemma:infty.problem-k.penalized}. Let us argue $(ii)$. Let $N,k\geq1$ and $\varphi\in \Phi$ be fixed.
	For every $n \in \mathbb{N}$, there is $\balpha^{n}:=(\alpha^{1,n},\dots,\alpha^{N,n}) \in \Ac^N$ such that
	\begin{equation}
	\label{eq:lower.bound.VinfknN}
		V^{N,k,\varphi}_c(\muin, \mufin) \ge \frac1{N}\sum_{i=1}^N \EE\bigg[\int_0^Tf(t, X^{i,\balpha^{n}}_t, \alpha^{i,n}_t,L^N(\X^{N,\balpha^n}_t))\diff t + kg^{\varphi}(\cL(X^{i,\alpha^{n}}_T)) \bigg] - \frac1n.
	\end{equation}
	Since $V^{N,k,\varphi}(\muin,\mufin)\le V^N(\muin, \mufin)<\infty$ and the functions $f_2$ and $g^\varphi$ are bounded from below, there is $C^{N,k,\varphi}>0$ independent of $n$, such that for $f_1^N(t,{\bf a}):=\frac1N \sum f_1(t,a^i)$, we have that
	\begin{equation}
	\label{eq:bound.f1N}
  \EE\bigg[\int_0^Tf_1^N(t, \alpha^{n}_t)\diff t\bigg] \le C,\; i=1,\dots,N.
	\end{equation}
	Since $f_1^N(t,{\bf a})$ is convex, by \Cref{lem:BaLaTa},
	the sequence $(\int_0^t\alpha^{n}_s\diff s)_{n\geq 1}$ is tight and converges to $\int_0^t\hat \alpha_s^{N}\diff s$ in law in $C([0,T],(\RR^m)^N)$ and 
	it holds that
	\begin{equation*}
	  	\liminf_{n\to \infty}\EE\bigg[\int_0^Tf_1^N(s, \alpha^{n}_s)\diff s\bigg]\ge \EE\bigg[\int_0^Tf_1^N(s, \hat \alpha_s^N)\diff s\bigg].
	\end{equation*} 
	Note that this last inequality shows $\hat\alpha^N=(\hat \alpha^{1,N},\dots,\hat\alpha^{N,N})\in \Ac^N$.\medskip
	
	Let us now argue the weak convergence of the associated controlled processes. We claim that $(\X^n)_{n\geq 1}$, where $\X^n:=\X^{N,\alpha^{n}}$, is tight.

	 For this note that there is $p>1$
	\begin{align*}
	\|X_t^{i,n}\|^p& \leq  C\bigg(\|X_0^i\|^p+ \int_0^T \| \alpha_r^{i,n}\|^p\diff r + \int_0^t \| b(r,X_r^{i,n},L^N(\X_r^n))\|^p \diff r+ \|\sigma W_t^i\|^p\bigg)\\
	& \leq   C\bigg(1+ \|X_0^i\|^p+ \int_0^T f_1(r, \alpha_r^{i,n}) \diff r + \int_0^t  \Big( \|X_r^{i,n}\|^p +\frac1N\sum_{j=1}^N \|X_r^{j,n}\| ^p\Big) \diff r+ \|\sigma W_t^i\|^p\bigg),
	\end{align*}
	{\color{black} where we use $f_1(t,a)\geq C_1+C_2 \|a\|^p$}. Averaging over $i=1,\dots,N,$ and using Gronwall's inequality, we find thanks to \eqref{eq:bound.f1N} that there is $C>0$, independent of $n$, such that
	$
	\E\big[ \frac1N\sum_{i=1}^N \sup_{t\in [0,T]} \|X_t^{i,n}\|^p\big] \leq  C
	$.
	Which in turn implies back in the previous estimate that there is $C>0$ such that
	\begin{align}\label{eq:boundXXntight}
	\sup_{n\geq 1} \E\bigg[\sup_{t\in [0,T]} \|\X_t^{n}\|^p\bigg]\leq C.
	\end{align}
	
	Similarly, for the same $p>1$, we may find that there is $C>0$
	\begin{align*}
	\|X_t^{i,n}-X_s^{i,n}\|^p \leq  C\bigg( |t-s|^{p-1}  \int_0^T f_1(r, \alpha_r^{i,n})  +\|X_r^{i,n}\|^p+\frac1N\sum_{j=1}^N \|X_r^{j,n}\|^p \diff r+ |t-s|^{\frac p2}\|\sigma\|^p\bigg),
	\end{align*}
	which, together with \eqref{eq:bound.f1N}, \eqref{eq:boundXXntight} and Aldous criterion, see \cite[Theorem 16.11]{kallenberg2002foundations}, allows us to conclude that $(\X^n)_{n\geq 1}$ is tight.\medskip

	By Skorokhod's extension theorem, we can find a common probability space $(\mybar \Omega, \mybar \cF, \mybar \P)$ supporting $(\mybar \X^n,\int_0^\cdot \mybar \alpha_t^{n} \diff t, \mybar \W^n) \stackrel{d}{=} (\X^n,\int_0^\cdot \alpha_t^{n} \diff t, \W)$, for all $n\geq 1$, and $(\mybar \X,\int_0^\cdot \mybar \alpha_t^{N} \diff t,\mybar \W)\stackrel{d}{=} (X,\int_0^\cdot \hat \alpha_t^{N} \diff t, \W)$, and on which $(\mybar \X^{n} ,  \int_0^\cdot \mybar\alpha^{n}_t \diff t  , \mybar \W^n)\longrightarrow ( \mybar \X , \int_0^\cdot \mybar{\alpha}_t^N\diff t , \mybar \W)$, $\mybar \P\text{--a.s.}$
	We may now use the Lipschitz continuity of $b$ to derive that for any $i=1,\dots,N$, it holds that
	  \begin{equation*}
	 	  \|\overline X^{i,n}_t - \overline X^{i,N}_t \|  \le   \|\overline X^{i,n}_0 - \overline X^{i,N}_0 \| +  \bigg\|\int_0^t\alpha^{i,n}_s  - \hat \alpha_s^{i,N} \diff s\bigg\|  +  \int_0^t\ell_b    \|X^{i,\alpha^{n}}_s - X^{i,\hat \alpha^N}_s \| +  \frac{\ell_b}N \sum_{j=1}^N  \E\big[ \|X^{j,\alpha^{n}}_s - X^{j,\hat \alpha^N}_s \|\big] \d s+ \|\overline W^{i,n}_t - \overline W^i_t \|
	 \end{equation*}
	 so that summing over $i$ and using Gronwall's inequality, we arrive at
	 \begin{equation*}
	 	 \mybar \EE \bigg[\sup_{t\in [0,T]} \|\mybar \X^{n}_t - \mybar \X^{N,\bar \alpha^N}_t \|\bigg] \le \mathrm{e}^{2 \ell_bT}   \mybar \EE \bigg[  \|\overline \X^{n}_0 - \overline \X^{N}_0 \|  + \sup_{t\in [0,T]} \bigg\|\int_0^t\alpha^{n}_s\diff s - \int_0^t\hat \alpha_s^N \diff s\bigg\| +  \sup_{t\in [0,T]}  \|\overline \W^{n}_t  - \overline \W_t \| \bigg].
	 \end{equation*}
	 Consequently, $\Wc_1\big(\Lc\big(\mybar \X^{n}\big),\Lc\big(\mybar \X^{N, \bar \alpha^N}\big)\big)=\Wc_1\big(\Lc\big( \X^{n}\big),\Lc\big( \X^{ \hat \alpha^{N}}\big)\big)$ converges to zero as $n$ goes to infinity. We conclude that $(\X^{N,\alpha^{n}})_{n\ge0}$ converges in law to $\X^{\hat \alpha^N}$, and for $i=1,\dots, N$, $X^{i,\hat \alpha^N}$ is uniquely given by the controlled SDE
	 \begin{equation*}
	 	\diff X^{i,\hat \alpha^N}_t = \hat \alpha_t^{i,N} + b(t, X^{i,\hat \alpha^N}_t, L^N(\X^{N,\hat\alpha}_t))\diff t + \sigma\diff W_t,\quad X^{i,\hat \alpha^N}_0 \sim \muin.
	 \end{equation*}
	 Therefore, taking the limit in $n$ in \eqref{eq:lower.bound.VinfknN}, it follows by continuity of $f$ and $g$ that
	 \begin{equation*}
	 	V^{N, k}(\muin, \mufin) \ge \frac1{N}\sum_{i=1}^N \EE\bigg[\int_0^Tf(t, X^{i,\hat \alpha^N}_t, \hat \alpha^{i}_t,L^N(\X^{N,\hat \balpha}_t))\diff t + kg^\varphi(\cL(X^{i,\hat \alpha^{i}}_T)) \bigg]  ,
	 \end{equation*}
	showing that $\hat \alpha^N$ is optimal since it is admissible.
	The characterization now follows from the maximum principle. Indeed, for Problem \eqref{eq:N.problem.c-k.penalized} the Hamiltonians and terminal condition take the form
	\begin{align}\label{eq:hamiltonial.particle.system}
	\begin{split}
	H^N_1(t, {\bf y})&:=\sup_{a^1,\dots,a^N}\bigg\{ \frac1{N}\sum_{i=1}^N f_1(t,a^i) +\sum_{i=1}^N  a^i\cdot y^i\bigg\} ,\, g^{\varphi,N}(\mu):=\frac1{N}\sum_{i=1}^N k g^{\varphi}(\pi^i_\# \mu) ,\\
	H^N_2(t,{\bf x}, {\bf y},L^N({\bf x}))&:=\frac1{N}\sum_{i=1}^N f_2(t,x^i,L^N({\bf x})) +\sum_{i=1}^N   b(t,x^i,L^N({\bf x}))\cdot y^i,
	\end{split}
	\end{align}

	where ${\bf x}=(x^1,\dots, x^N)\in (\R^{\xdim})^N$, ${\bf a}=(a^1,\dots, a^N)\in A^N$, ${\bf y}=(y^1,\dots, y^N)\in(\R^{\xdim})^N$, and $\mu \in \Pc_2((\R^{\xdim})^N)$.
	Thus, the maximum principle leads to the system which for $X^{i,N}:=X^{i,\hat \alpha^N}$, $\tilde Y^{i,N}$, and, $\tilde Z^{i,N}$, $i=1,\dots,N$, satisfies 	\[
	\begin{cases}
		\diff X_t^{i,N}  = B\big(t, X_t^{i,N}, N\tilde Y^{i,N}_t, L^N(\X^{N}_t)\big)\diff t + \sigma \diff W^i_t\\
	\d \tilde Y_t^{i,N} =  - \frac{1}N F\big ( t,X_t^{i,N},N \tilde Y_t^{i,N} , L^N(\X_t^{N},N \tilde \Y_t^{N})\big)\d t + \sum_{k=1}^N \tilde Z^{k,i} \d W^k_t\\
	X_0^{i,N} \sim \muin,\;  \tilde Y_T^i=\frac{k}N  \nabla g(\Lc(X_T^{i,N}))(X_T^{i,N}).
	\end{cases}
	\]
	{We also notice that \eqref{eq:hamiltonial.particle.system} and the optimality of $\hat\alpha^N$ implies that $f_1(t,\hat \alpha^{i,N}_t)+\hat \alpha^{i,N}_t\cdot (N \tilde Y^i_t)= H_1(t,N \tilde Y^i_t)$, $\d t\otimes\d \P\text{--a.e.}$, $i=1,\dots, N$, for $H^1$ given by \eqref{eq:def.H}. 
	That is, $\hat \alpha^{i,N}_t=\Lambda(t,N \tilde Y^i_t)$, for $i=1,\dots, N$}.
	The result follows from the change of variables $(\Y,\Z):=(N\tilde \Y,N\tilde \Z)$.
	We also remark that since $(\Y,\Z)$ are uniquely defined by the solution to the BSDE in \eqref{eq:N.fbsde.proof.main.c} we have that $(Y^{i,N},Z^{i,N}) \in\S^2 \times\H^2$. In particular, by classic estimates on BSDEs, see \citeauthor*{zhang2017backward} \cite[Theorem 4.2.1]{zhang2017backward}, there is $C>0$, independent on $N$, such that $\| Y^{i,N}\|_{\S^2}+\| Z^{i,N}\|_{\H^2}<C$.\qed

\end{appendix}

{\small
\bibliography{bibliography}
}

\end{document}